\newtheorem*{thmA}{Theorem A}
\theoremstyle{plain}
\newtheorem{theorem}{Theorem}[section]
\newtheorem{lemma}[theorem]{Lemma}
\newtheorem{proposition}[theorem]{Proposition}
\newtheorem{corollary}[theorem]{Corollary}
\newtheorem{remark}[theorem]{Remark}
\newcommand{\R}{\mathbb{R}}
\newcommand{\A}{\mathcal{A}}
\newcommand{\D}{\mathcal{D}}
\newcommand{\TT}{\mathcal{T}}
\newcommand{\K}{\mathcal{K}}
\newcommand{\graph}{\mathrm{graph}\,}
\newcommand{\image}{\mathrm{image}\,}
\newcommand{\Int}{\mathrm{int}\,}
\newcommand{\loc}{\mathrm{loc}\,}
\newcommand{\cal}{\mathcal}
\newcommand{\wh}{\widehat}
\newcommand{\wt}{\widetilde}
\newcommand{\J}{\ensuremath{{\cal J}}}
\newcommand{\al}{\alpha}
\newcommand{\be}{\beta}
\newcommand{\ga}{\gamma}
\newcommand{\la}{\lambda}
\newcommand{\ph}{\varphi}
\renewcommand{\r}{\color{red}}
\newsavebox{\savepar}
\numberwithin{equation}{section}
\begin{document}

\title[A model for the dynamics of the secant map near a critical three-cycle] {
On the basin of attraction of a critical three-cycle of a model for the secant map
}

\date{\today}

\author{Ernest Fontich}
\address{Departament de Matem\`atiques i Inform\`atica, Universitat de Barcelona (UB), 
Gran Via de les Corts Catalanes 585, 08007 Barcelona, Catalonia,  Spain
and 
Centre de Recerca Matem\`atica (CRM), 
08193 Bellaterra, Barcelona, Catalonia, Spain}
\email{fontich@ub.edu}
\author{Antonio Garijo}
\address{Departament d'Enginyeria Inform\`atica i Matem\`atiques,
Universitat Rovira i Virgili, 43007 Tarragona, Catalonia, Spain}
\email{antonio.garijo@urv.cat}
\author{Xavier Jarque}
\address{Departament de Matemàtiques i Informàtica, Universitat de Barcelona, Gran Via, 585, 08007 Barcelona, Catalonia and  Centre de Recerca Matemàtica, Edifici C, Campus Bellaterra, 08193 Bellaterra, Catalonia}
\email{xavier.jarque@ub.edu}

\thanks{The first author is supported by the
	grant PID2021-125535NB-I00.   
	The second and  third  authors were  supported by MICIU/AEI grant PID2020-118281GB-C32(33) Both grant are funded by MICIU/AEI/10.13039/501100011033/FEDER,UE.  The second author is supported by  Generalitat de Catalunya  2021SGR-633. We want to thank  the Thematic Research Programme {\it Modern holomorphic dynamics and related fields}, Excellence Initiative – Research University programme at the University of Warsaw.
	Finally, this work has also been funded through the Mar\'ia de Maeztu Program for Centers and Units of Excellence in R\&D, grant CEX2020-001084-M funded by MICIU/AEI/10.13039/501100011033/FEDER,UE}

\begin{abstract}
 We consider the secant method $S_p$ applied to a   real  polynomial $p$ of degree $d+1$ as a  discrete dynamical system on $\mathbb R^2$. If the polynomial $p$ has a local extremum at a point $\alpha$ then the discrete dynamical system generated by the iterates of the secant map exhibits a
 critical periodic orbit of period 3 or three-cycle at the point $(\alpha,\alpha)$. We propose a simple model map $T_{a,d}$ having a unique fixed point at the origin which encodes the dynamical behaviour of $S_p^3$ at the critical  three-cycle. The main goal of the paper is to describe the geometry and topology of the basin of attraction of the origin of $T_{a,d}$ as well as its boundary. Our results concern global, rather than local, dynamical behaviour. They include that the boundary of the basin of attraction is the stable manifold of a fixed point or contains the stable manifold of a  two-cycle, depending on the  values of the parameters of $d$  (even or odd) and $a\in \mathbb R$ (positive or negative). 
 \newline
{\it Keywords: Root finding algorithms, secant map, stable manifold, center manifold, basin of attraction.}
\end{abstract}

\maketitle

%
%
%
%
%
%

\section{ Introduction}

A major goal in applied and theoretical mathematical modelling is to find  stable equilibria which determines the expected behaviour of the phenomenon we are analyzing. Those equilibria are given by real (or complex) numbers, real (or complex) finite dimensional vectors, or functions belonging to an infinite dimensional space, depending on the nature of the model under consideration. 

In the majority of cases, the stable equilibria determining the evolutionary steady states of any model turn out to be solutions of non-linear equations. In general, we cannot solve these equations explicitly.  Accordingly, there is a long history of research studying different {\it algorithms} which efficiently find their solutions.

Among these algorithms the ones given by the special kind of discrete dynamical systems, known as 
root-finding algorithms, have been shown to be the most useful. Roughly speaking, a {\it root-finding algorithm} is a  system such that for most of the initial conditions the asymptotic behaviour of the corresponding iterative process tends to one of the solutions of the non-linear equation determining the equilibria of the model. We observe that the condition of convergence {\it for most of the initial conditions} is a global phenomenon rather than (only) a local one. Indeed, this is the reason why the {\it global dynamics} of root-finding algorithms has been an important subject of study for  dynamicists. 

Moreover, when the model has more than one steady state the phase portrait of the root-finding algorithm splits into regions where the iterates of the seeds converge to different equilibria. Consequently, two natural questions arise. On the one hand, about the boundaries of these regions: do they have easy geometry and topology? what about the restricted dynamics over these boundaries? 
 do they have positive measure?
On the other hand, about the stable steady states: are there other stable behaviour of the algorithm unrelated to the steady states of the model? If they exist, there would be open sets of the dynamical plane where the root-finding algorithm is full of {\it bad} initial conditions. The answers to all these questions has had a great influence on the study of theoretical as well as applied discrete dynamical systems.

There is no discussion that the most famous root-finding algorithm is the well-known Newton's method. More concretely, assume the equation we want to solve is $p(x)=0$, where, to simplify the exposition, we assume $p(x)$ to be a  polynomial with $x\in \mathbb R$ or $x\in \mathbb C$ but the method extends to higher dimensional problems.  {\it Newton's method} is the study of the dynamical system
\begin{equation}\label{eq:newton}
x_{n+1}=N_p\left(x_n\right):= x_n-\frac{p(x_n)}{p^\prime(x_n)},\qquad  x_0 \in \mathbb R \quad \text{or} \quad x_0 \in \mathbb C.
\end{equation}
It is easy to see that $N_p(\alpha)=\alpha$ if and only if $p(\alpha)=0$ (if $p^{(\ell)}(\alpha)=0,\ 1\le \ell \le k$, one can still use \eqref{eq:newton} after some modifications) and moreover if $x_0\approx \alpha$ then $\{x_n:=N^n_p(x_0)\} \to \alpha$ as $n\to \infty$. Consequently, Newton's method is a (one dimensional) dynamical systems whose fixed points correspond to the roots of $p$ and they are  local attractors. In fact, it is somehow {\it the canonical} roof-finding algorithm. The natural dynamical space for one dimensional Newton's method  applied to degree $d$ polynomials is $\mathbb C$ (rather than $\mathbb R$) due to the fundamental theorem of algebra, and it defines one of the most studied family of rational (holomorphic) maps on the Riemann sphere \cite{Bla1,Bla2,Shi,HowToNewton}. See also \cite{Ber,BarFagJarKar} for Newton's method applied to transcendental entire maps. 

Despite its fundamental role, Newton's method has some limitations and the literature has explored other root finding algorithms trying to avoid these weakness (for instance avoiding to compute the derivatives if their computational cost is too high) or to improve the efficiency of the method under certain hypothesis (for instance improving the local speed of convergence of the method to the root(s) of $p$).

Another basic 
root-finding algorithm is the {\it secant method} given by the dynamical system generated by the iterates of the 2-dimensional map
\begin{equation}\label{eq:secant}
S(x,y)=S_p(x,y):=\left(y,y-p(y) \frac{x-y}{p(x)-p(y)}\right).
\end{equation}
However, in contrast to Newton's method, secant's method is a two dimensional system and it does not required to compute any derivative of $p$. Nonetheless, as before, we have that $S(\alpha,\alpha)=(\alpha,\alpha)$ if and only if $p(\alpha)=0$. Moreover if $(x_0,y_0)\approx (\alpha,\alpha)$ then 
$$
\{\left(x_n,y_n\right):=S^n(x_0,y_0)\} \to (\alpha,\alpha)
$$
at least for simple roots of $p$ (see \cite{Multiple} for multiple roots). We refer to \cite{Tangent, BedFri}, and references therein, for a detailed discussion of the phase plane ($\mathbb R^2$ and $\mathbb C^2$) of the secant method.  

One (unexpected) fact of the real secant map is that there are no  finite periodic points of period two or three in $\mathbb R^2$. However, it has \textit{finite} periodic points of period four and some of them determine the geometry and topology of the boundaries of the immediate basin of attraction of its fixed points (see \cite{GarGarJar}). Moreover, if we extend the domain of the secant method to {\it infinity} (that is, if we extend the secant map to $\mathbb {RP}^2$ or $\mathbb {CP}^2$) a new three-cycle phenomenon arises. Indeed, in \cite{BedFri} (see also \cite{Tangent}) the authors showed that if $c\in \mathbb R$ satisfies that $p'(c)=0$ (critical point) and $p(c) p''(c)\neq 0$ the secant method exhibits a {\it critical} three-cycle at $(c,c)$ 
given by
\[
(c,c)  \xmapsto{\text{S}}   (c,\infty) \xmapsto{\text{S}}  (\infty,c) \xmapsto{\text{S}} (c,c).
\]
Moreover, the three-cycle has a basin of attraction whose geometry varies depending on the degree of the polynomial. However,  its geometry and topology is quite similar among polynomials of the same degree. These basins, and their 
disparate geometry can be visualized in red in Figure \ref{fig:secant} for concrete 
polynomials of degree of different parity. 

The main goal of this work is to go deeper into the understanding of the geometry of the basin of the critical three-cycle by means of a model which captures the relevant information and allows us to give an accurate description.

\begin{figure}[hbt!]
	\centering
	\begin{tikzpicture}
		\begin{axis}[width=200pt, axis equal image, scale only axis,  enlargelimits=false, axis on top]
			\addplot graphics[xmin=-1.5,xmax=1.5,ymin=-1.5,ymax=1.5] {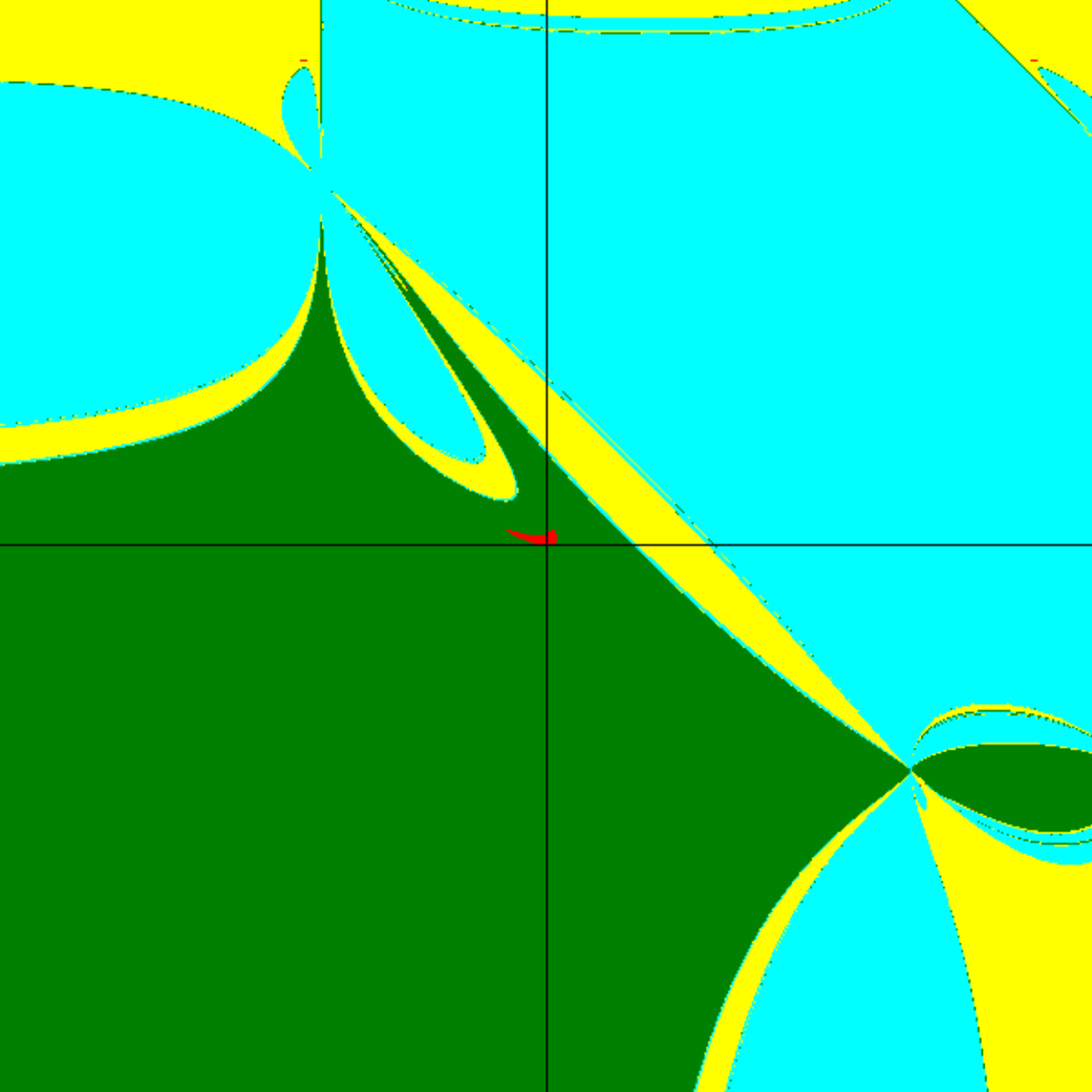};
		\end{axis}
	\end{tikzpicture}
	\begin{tikzpicture}
		\begin{axis}[width=200pt, axis equal image, scale only axis,  enlargelimits=false, axis on top]
			\addplot graphics[xmin=-0.2,xmax=0.2,ymin=-0.2,ymax=0.2] {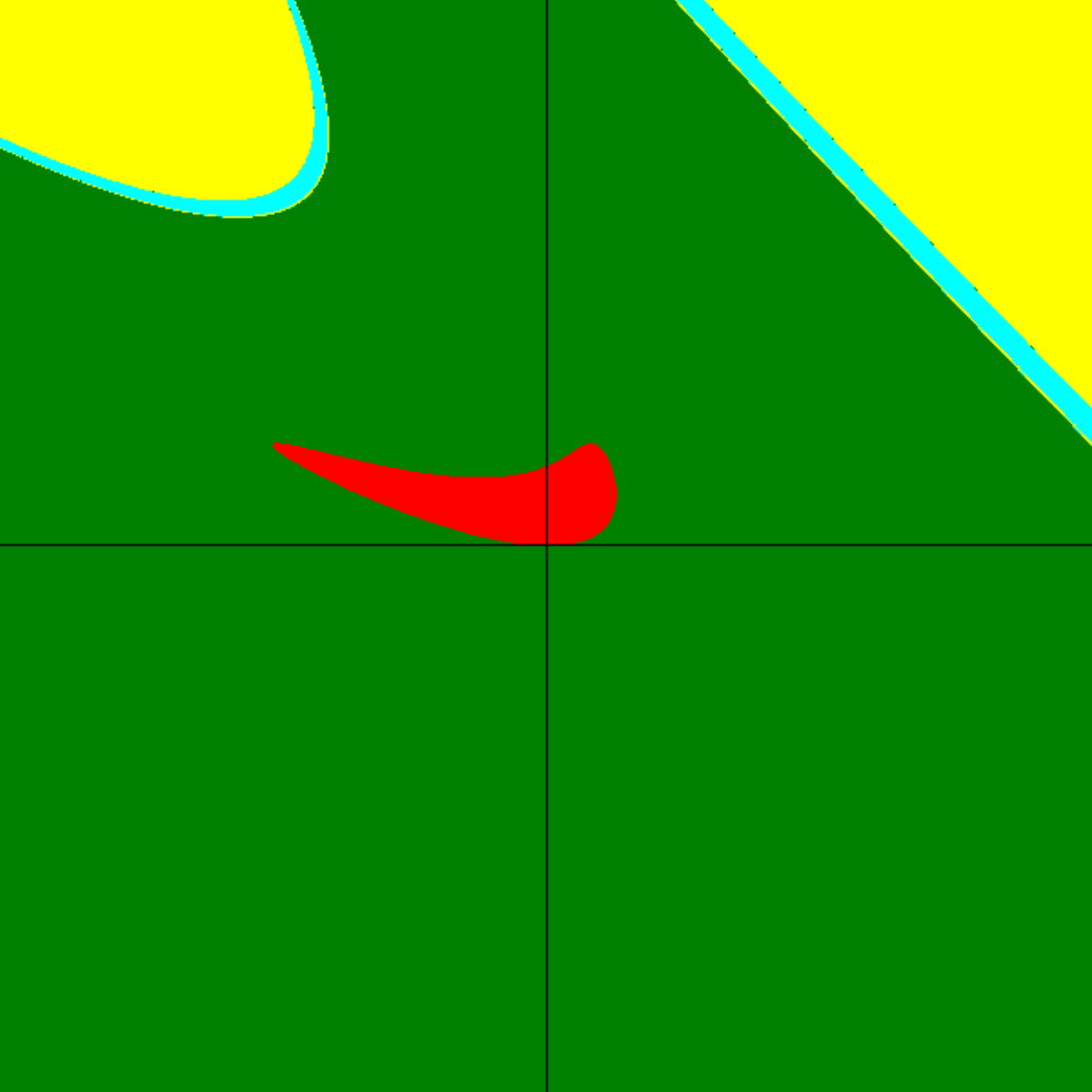};
		\end{axis}
	\end{tikzpicture}
	\begin{tikzpicture}
		\begin{axis}[width=200pt, axis equal image, scale only axis,  enlargelimits=false, axis on top]
			\addplot graphics[xmin=-1.5,xmax=1.5,ymin=-1.5,ymax=1.5] {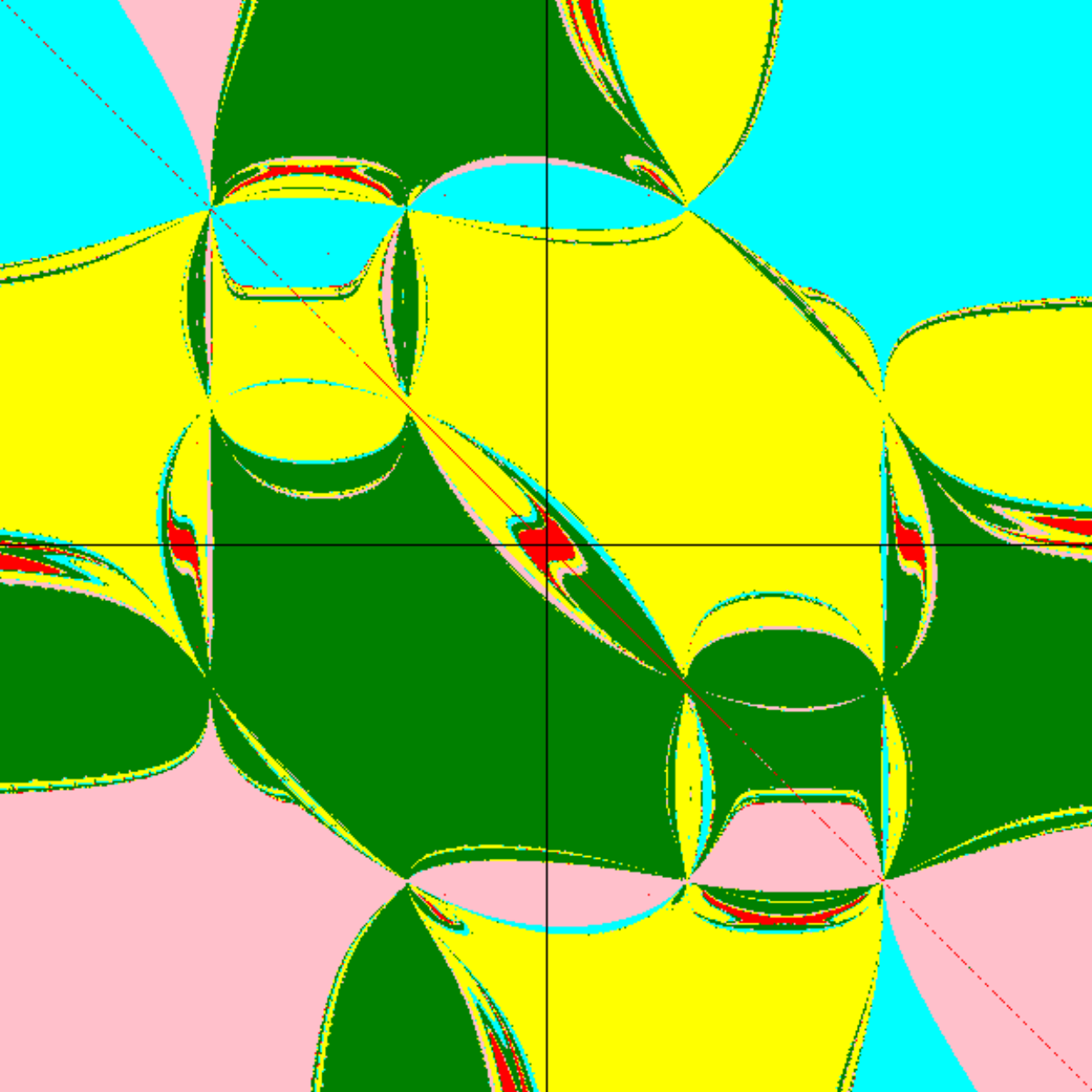};
		\end{axis}
	\end{tikzpicture}
	\begin{tikzpicture}
		\begin{axis}[width=200pt, axis equal image, scale only axis,  enlargelimits=false, axis on top]
			\addplot graphics[xmin=-0.5,xmax=0.5,ymin=-0.5,ymax=0.5] {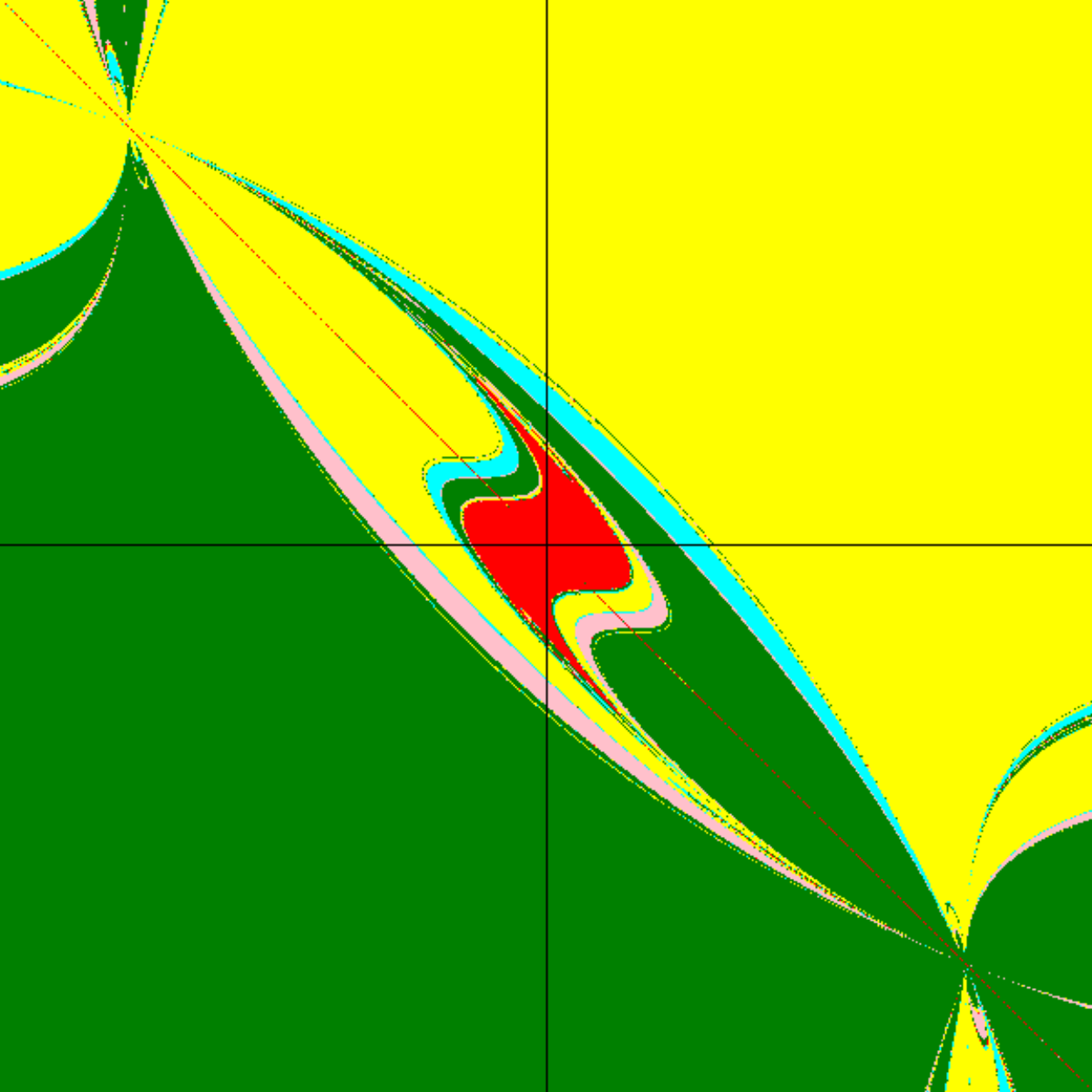};
		\end{axis}
	\end{tikzpicture}

	\caption{Phase planes of the secant map applied to the polynomials  $p(x)= 1 -2 x^2 + x^3$ (first row) and $p(x)=1-8x^2+8x^4$ (second row). In all the pictures we show in red  the set of points converging towards the  critical three-cycle $\{(0,0),(0,\infty),(\infty,0)\}$. The second column is a zoom  near the origin of the first one. } 
	\label{fig:secant}
\end{figure}

Following the approach in \cite{BedFri} we assume, without lost of generality, that $c=0$ and $p(0)=1$. Thus, assuming also that $\mbox{deg}(p)=d+1$, the polynomial $p$ writes as 
\begin{equation}\label{eq:p}
p(x)= 1 + a_2 x^2 + \ldots + a_{d+1}x^{d+1},
\end{equation}
\noindent where $d \geq 2$ and $a_2a_{d+1} \neq 0$. Using the natural extension of $S$ at infinity, via  the charts $\varphi_1(x,y)=(1/x,y)$  and $\varphi_2(x,y)=(x,1/y)$, and some computations (explicit in \cite{BedFri}) 
the expression of $S^3$ near the origin is given by
  \begin{equation}\label{eq:S3}
S^3
\left(
\begin{array}{l}
 x   \\ 
 y 
\end{array}
\right)  = 
 \left(
\begin{array}{l}
y - \frac{(-a_2)^{d}}{a_{d+1}} (x+y)^{d} \\ 
y - 2 \frac{(-a_2)^{d}}{a_{d+1}} (x+y)^{d}
\end{array}
\right)
+ \mathcal O_{d+1},
\end{equation}
where $\mathcal O_{d+1}$ indicates terms bounded by $(|x|+|y|)^{d+1}$.  The expression \eqref{eq:S3} motivates the introduction of the following model map $T_{a,d}$ which encodes the dominant terms of $S^3$ near the origin. Concretely,
\begin{equation}\label{eq:T}
T_{a,d}\left(
\begin{array}{l}
 x   \\ 
 y 
\end{array}
\right)  = 
 \left(
\begin{array}{l}
y - a (x+y)^d \\ 
y - 2a(x+y)^d
\end{array}
\right), 
\end{equation}
where $a\neq 0$ is a parameter. 
We now are ready to state the main results of this paper about the basin of attraction of the origin of \eqref{eq:T}, defined as
\begin{equation}\label{eq:basin}
\mathcal A_{a,d}(0) = \{ (x,y) \in \mathbb R^2\, | \ T_{a,d}^n(x,y) \to (0,0) \,  \hbox{ as }  \, n \to \infty  \} ,
\end{equation}
depending on the parameters $a$ and $d$. Obviously, the origin is the unique fixed point of $T_{a,d}$ and $DT_d(0,0)$ has eigenvalues $0$ and $1$. The $0$ eigenvalue guarantees that $\mathcal A_{a,d}(0)\ne \emptyset$ but a complete topological and geometric description depends on the motion over the center manifold. The main theorem describes $\mathcal A_{a,d}(0)$ as well as its boundary $\partial \mathcal A_{a,d}(0)$ depending on $a$ and $d$. 

 \begin{thmA}
 Let $\mathcal A_{a,d}(0)$ be the basin of attraction of the origin for the map $T_{a,d}$. 
  \begin{enumerate}
 \item[(a)]If $d$ is even and $a \neq 0$ then  $ \mathcal A_{a,d}(0)$  is a compact  set which is homeomorphic to a closed topological disk and the  boundary of $\mathcal A_{a,d}(0)$ is the stable manifold  of the origin. See Figure \ref {fig:dynamical_plane}(a).
 \item[(b)] If $d$ is odd and $a>0$ then $ \mathcal A_{a,d}(0)$ is an open, simply connected, unbounded  set. Moreover, $\partial  \mathcal A_{a,d}(0)$ contains the stable manifold of a hyperbolic two-cycle  $\{p_0,p_1\}$ lying on   $\partial  \mathcal A_{a,d}(0)$. See Figure \ref {fig:dynamical_plane}(b).
 \item[(c)] If $d$ is odd and $a<0$ then  $ \mathcal A_{a,d}(0)$ is the stable manifold of the origin. Moreover,  $ \mathcal A_{a,d}(0)$ is unbounded. See Figure \ref {fig:dynamical_plane}(c).
 \end{enumerate}
\end{thmA}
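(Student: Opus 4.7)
The argument splits into a local step at the origin, which sets up the three-case trichotomy, and a global step that propagates the local picture across $\mathbb R^2$. In the coordinates $(u,v)=(x+y,y-x)$ the map becomes
\begin{equation*}
T_{a,d}(u,v) = \bigl(u+v-3au^d,\; -au^d\bigr),
\end{equation*}
so $DT_{a,d}(0,0)$ has eigenvalues $0$ and $1$ with eigenvectors $(1,-1)^T$ (strong stable) and $(1,0)^T$ (center). The strong stable and center manifold theorems supply smooth local invariant curves $W^s_{\loc}(0)$, tangent to the $x$-axis in the original coordinates, and $W^c_{\loc}(0)$, tangent to the diagonal. Writing $W^c_{\loc}(0)$ as a graph $v=h(u)$ and solving the invariance equation gives $h(u)=-au^d+O(u^{d+1})$ and the reduced one-dimensional dynamics
\begin{equation*}
u \mapsto u-4au^d+O(u^{d+1}).
\end{equation*}
This normal form distinguishes the three situations of the theorem: for $d$ even the center manifold is semi-stable (attracting on one side of $0$ and repelling on the other), for $d$ odd with $a>0$ it is bilaterally attracting, and for $d$ odd with $a<0$ it is bilaterally repelling.

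With this local trichotomy in hand, I would treat each case in turn. Case (c) is the cleanest: since no orbit can approach $0$ along the center manifold, any convergent orbit must eventually meet $W^s_{\loc}(0)$, so $\mathcal A_{a,d}(0)$ coincides with the global stable manifold $W^s(0)=\bigcup_{n\ge 0}T_{a,d}^{-n}(W^s_{\loc}(0))$; unboundedness then follows from the elementary fact that, for $d$ odd, the preimage equation $T_{a,d}(x,y)=(x_0,0)$ always has a real solution (a single real $d$-th root), so $W^s(0)$ extends indefinitely. In case (b), exploiting the symmetry $(x,y)\mapsto(-y,-x)$ that $T_{a,d}$ enjoys for $d$ odd, the equation $T_{a,d}(x_0,y_0)=(-y_0,-x_0)$ reduces algebraically to the explicit two-cycle $p_0=(3y_0,y_0)$, $p_1=(-y_0,-3y_0)$ with $y_0^{d-1}=1/(a\cdot 2^{2d-1})$, real precisely when $a>0$. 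A direct computation of the multipliers of $T_{a,d}^2$ at $p_0$ shows that it is a hyperbolic saddle; its one-dimensional stable manifold then separates points attracted by $W^c_{\loc}(0)$ from those that are not, forcing it into $\partial \mathcal A_{a,d}(0)$ and making $\mathcal A_{a,d}(0)$ open, simply connected and unbounded.

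Case (a) is, in my view, the technical heart of the argument. The one-sided attraction of $W^c_{\loc}(0)$ confines the basin between a branch of $W^s(0)$ and the repelling half of $W^c_{\loc}(0)$. The plan is to construct a forward-invariant topological disk bounded by a piece of $W^s(0)$ together with a transverse arc, and then show that any orbit leaving a sufficiently large region has $|x+y|$ growing without bound (the $(x+y)^d$ term, now with $d$ even, dominates and drives iterates to infinity rather than back to the origin). A standard closure argument identifies the trapping region with $\overline{\mathcal A_{a,d}(0)}$. The main obstacle I anticipate is controlling the global extension of $W^s(0)$ under the \emph{non-invertible} map $T_{a,d}$: one must verify that successive preimages of $W^s_{\loc}(0)$ fit together into a single simple closed curve (so the basin is a topological disk), which will require a transversality-and-monotonicity argument tailored to the explicit polynomial form of $T_{a,d}$.
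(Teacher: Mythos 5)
Your local analysis in the coordinates $(u,v)=(x+y,\,y-x)$ is correct and reproduces the paper's Lemma 3.3 and Remark 3.4: the reduced center dynamics $u\mapsto u-4au^d+O(u^{d+1})$ does yield the right trichotomy. The global part, however, contains an outright error and several gaps, and the gaps are precisely where the substance of the theorem lies. The error is your two-cycle in case (b): $T_{a,d}$ does \emph{not} commute with $(x,y)\mapsto(-y,-x)$, so solving $T_{a,d}(p)=(-y_0,-x_0)$ does not produce a period-two point. Indeed $T_{a,d}(3y_0,y_0)=(-y_0,-3y_0)$ when $a(4y_0)^d=2y_0$, but then $T_{a,d}(-y_0,-3y_0)=(-y_0,\,y_0)\neq(3y_0,y_0)$. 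Solving the period-two equations directly (with $s_k=x_k+y_k$ one gets $s_1=-s_0$, $y_1=-y_0$ and $s_0=as_0^{d}$) shows the only nontrivial cycle is $\{(0,s_0),(0,-s_0)\}$ with $s_0^{d-1}=1/a$, i.e.\ $\{(0,1),(0,-1)\}$ after the rescaling to $a=1$, which is what the paper uses. Moreover, even with the correct cycle, the claim that its stable manifold ``separates points attracted by $W^c_{\loc}(0)$ from those that are not, forcing it into $\partial\mathcal A_{a,d}(0)$'' is not an argument. The paper's mechanism is different and essential: it first proves that $W^u_{p_0}$ meets a segment $(-1/2,-1/3)\times\{0\}$ that is already known to lie in the basin (Lemmas 5.2--5.5, via an explicit trapping triangle $\D$, the geometry of $T_d(\D)$ and $T_d^{-1}(\D)$, and a monotonicity estimate for $T_d^{-2}$), and only then does the $\lambda$-lemma give $W^s\subset\partial\mathcal A_d(0)$. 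Without locating the unstable manifold inside the basin you have no way to place the stable manifold on the boundary.

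The second serious gap is unboundedness. In case (c) you deduce that $W^s(0)$ is unbounded from the mere existence of preimages; this is a non sequitur, since $\bigcup_{n}T_{a,d}^{-n}(W^s_{\loc}(0))$ is an increasing union of compact arcs that could perfectly well accumulate inside a bounded set. The paper's argument (Section 6) is quantitative: it constructs a region $D^0$ in the fourth quadrant with $T_d^2(D^0)$ equal to the whole quadrant, proves the contraction $x_2\le x_0/2$ on $D^0$, and runs a nested-compact-sets argument to produce points of $W^s(0)$ with arbitrarily large abscissa; the analogous estimate (Claim 1 in Proposition 5.7) is what gives unboundedness in case (b). Finally, case (a) as written is a plan rather than a proof. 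You correctly identify the obstacle --- that the preimages of $W^s_{\loc}(0)$ under the two inverse branches must close up into a Jordan curve --- but its resolution requires an input absent from your sketch: all Taylor coefficients of $\varphi^s_d$ are nonnegative (Lemma 3.5), so by the Vivanti--Pringsheim theorem $\varphi^s_d$ is increasing and convex up to its radius of convergence; this is what allows the paper to show that $W^s_d(0)$ actually reaches the diagonal at a point $(p,p)$ and that the two preimage arcs $\Gamma_\pm$ meet at $(-p,p)$, closing the boundary curve and identifying $\partial\mathcal A_d(0)$ with $W^s_d(0)$.
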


\begin{figure}[ht]
    \centering
    \subfigure[\scriptsize{  $d =2$ and $a=1$. }]{
     \includegraphics[width=0.3\textwidth]{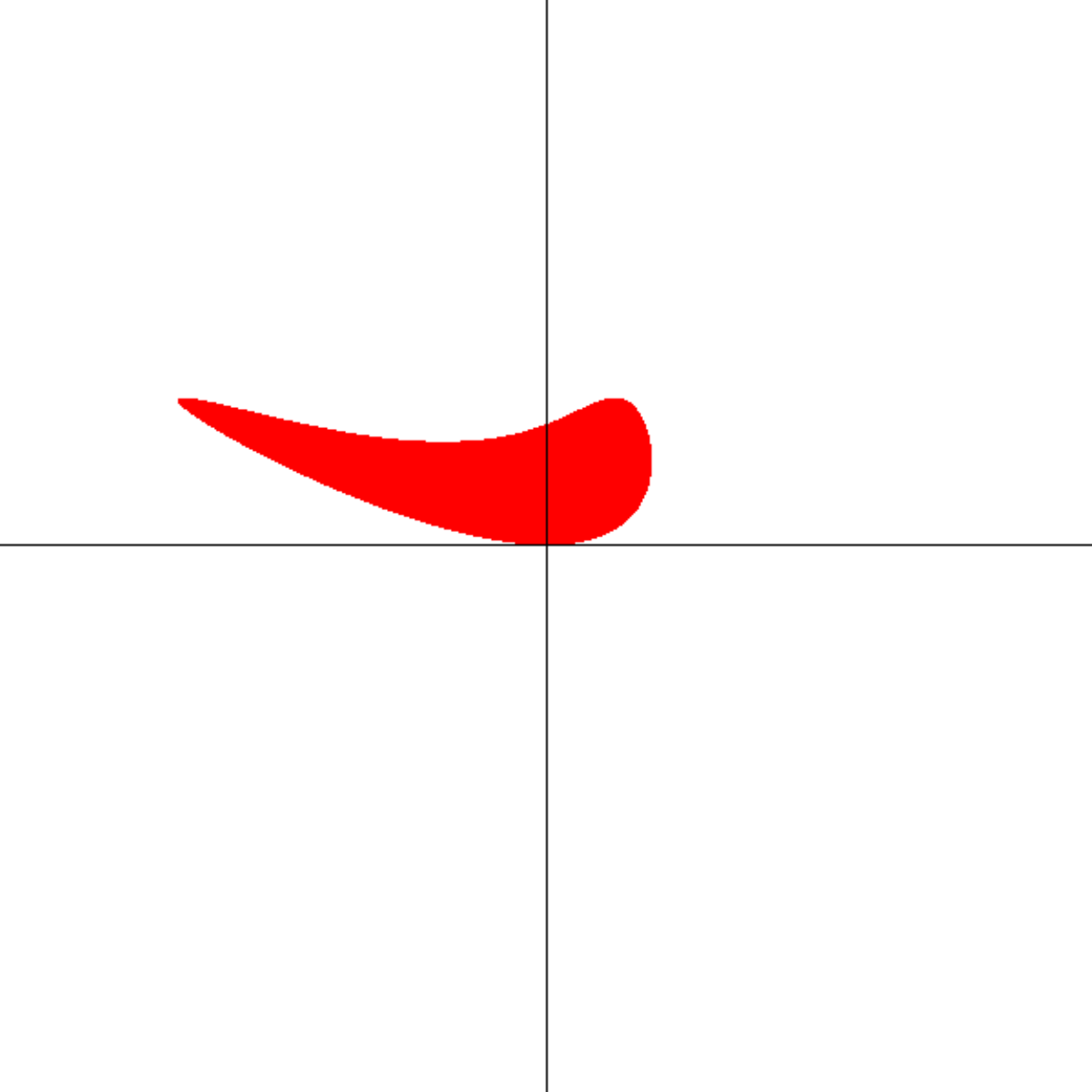}
      }
       \subfigure[\scriptsize{ $d =3$ and $a=1$.}]{
     \includegraphics[width=0.3\textwidth]{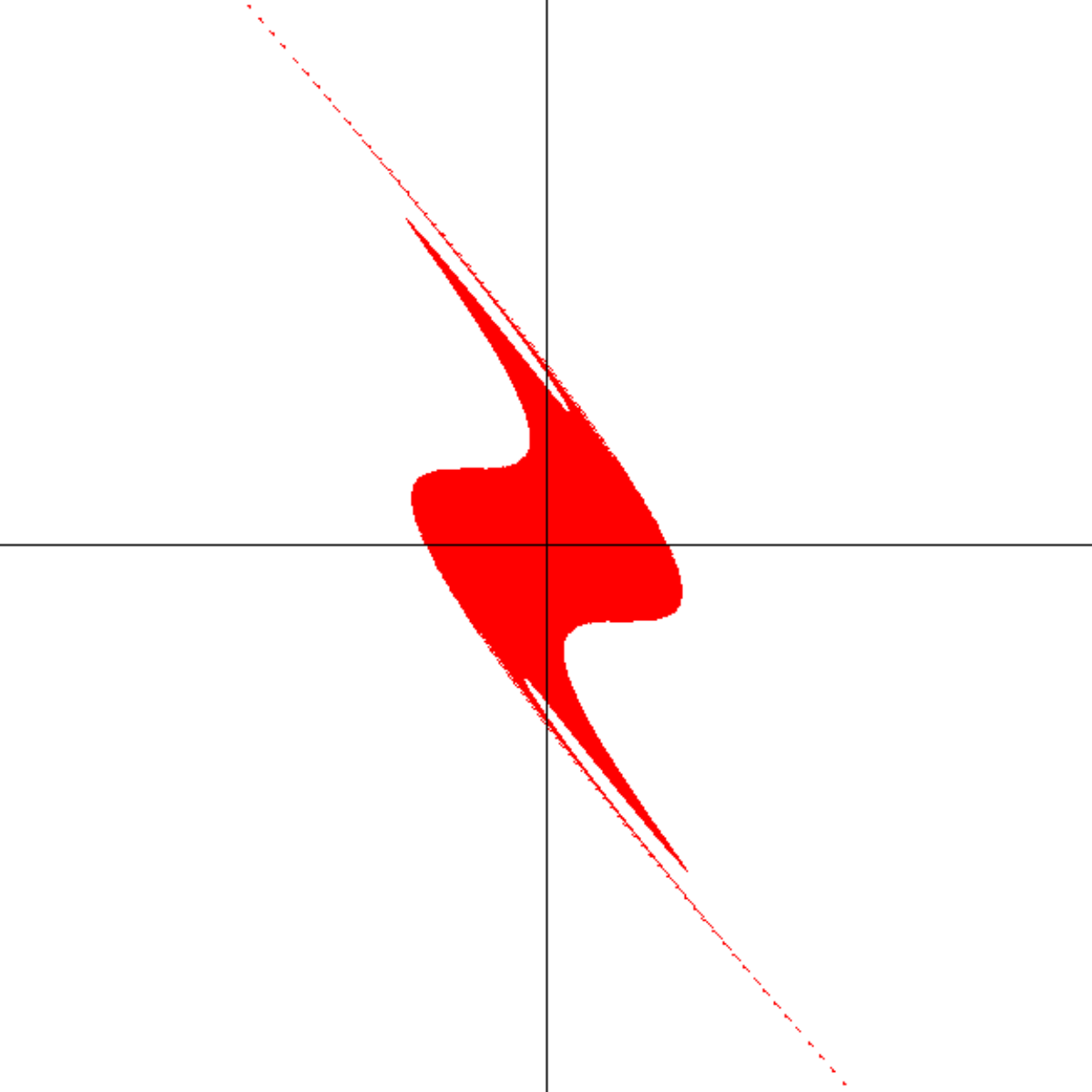}
      }      
    \subfigure[\scriptsize{$d =3$ and $a=-1$.}]{
     \includegraphics[width=0.3\textwidth]{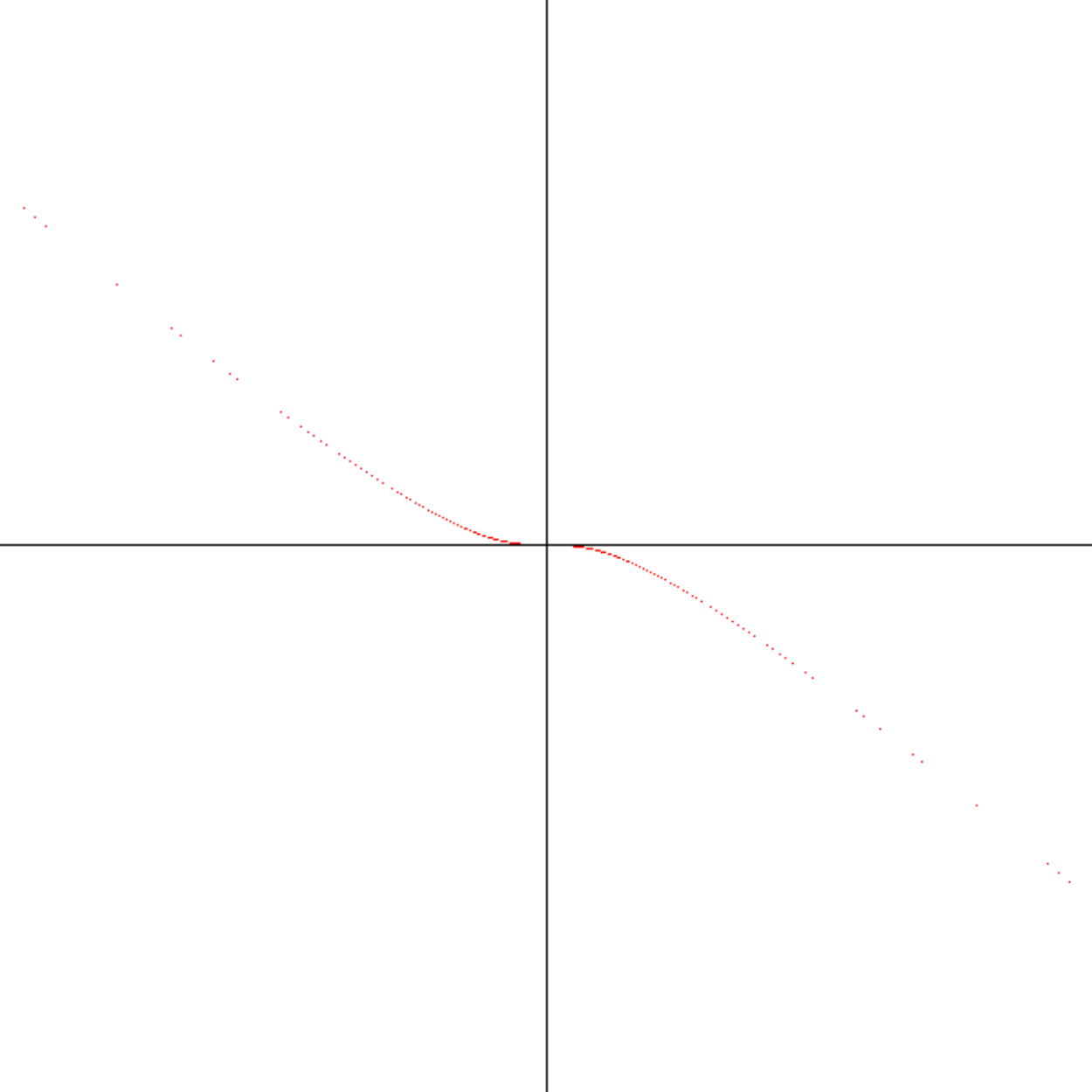}
}
      
 \caption{\small{ Phase plane of the map $T_{a,d}$ for different values of $a$ and $d$. The basin of attraction $\mathcal A_{a,d}$ is shown in red. }}
    \label{fig:dynamical_plane}
    \end{figure}

\begin{figure}[ht]
    \centering
     \includegraphics[width=0.5\textwidth]{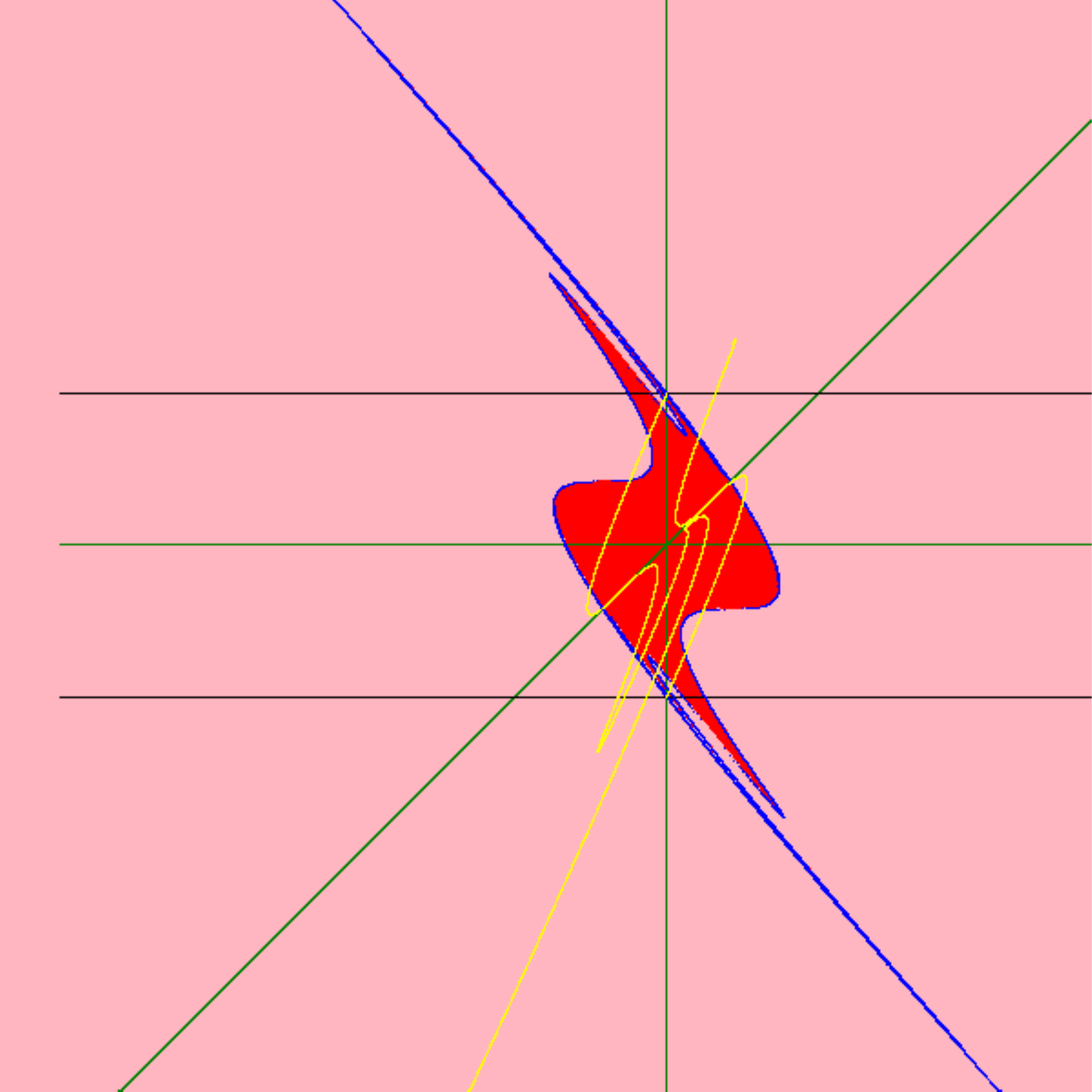}
      \put(-200,150){\tiny $y=1$}
      \put(-200,85){\tiny $y=-1$}
      \put(-84,145){\tiny $p_0$}
      \put(-94,72){\tiny $p_1$}
       \caption{\small{The homoclinic intersection between the stable (blue) and the unstable (yelow) manifolds of the hyperbolic  two-cycle $\{p_0,p_1\}$. The picture also illustrates that the stable manifold of the cycle is related to the boundary of $\mathcal A_d(0)$ (in red) for $d\geq 3$ odd (the picture is done for $d=3$).}}       
    \label{fig:intersection_global}
    \end{figure}

We finish with an important remark, somehow complementary, on the previous result to calibrate their value. On the one hand, from construction, system \eqref{eq:T} encodes the information of system \eqref{eq:S3} as long as $(x,y)\approx (0,0)$. But if one reads carefully Theorem A, we see that it does not refer to the dynamics in a given small neighbourhood of the origin, as for instance Theorem A(b) is showing that $\mathcal A_{a,d},\ a>0,$ is unbounded. Hence, {\it a priory}, there is no reason to argue that Theorem A can be {\it transported} to explain the red regions in Figure \ref{fig:secant}. However, comparing the top right picture in Figure \ref{fig:secant} with Figure \ref{fig:dynamical_plane}(a) ($d$ even), or comparing the bottom right picture in Figure \ref{fig:secant} with Figure \ref{fig:dynamical_plane}(b) ($d$ odd) one can immediately see that \eqref{eq:T} and \eqref{eq:S3} share more than expected. A better explanation for this  connection, somehow global, will require future work.

In a companion paper \cite{FGJ24b} we study in more depth the boundary of $\mathcal A_{a,d}(0)$  when $d$ is odd and $a $ is positive and we show 
there is a (topologically transversally) homoclinic intersection between the stable and the unstable manifolds of the hyperbolic two-cycle $\{p_0,p_1\}$ and there are infinitely many periodic points (somehow chaotic dynamics) in $\partial \mathcal A_{a,d}(0)$. See Figure \ref{fig:intersection_global}.

The paper is organized as follows. In Section \ref{sec:Model} we show that $T_{a,d}$ reduces to three cases: $d$ even with $a=1$, and $d$ odd with $a=\pm 1$. In Section \ref{sec:local} we study the series expansions of the stable and center invariant  manifolds of the origin. Theorems  A(a) and A(b) are proven in Sections \ref{sec:d_even} and \ref{sec:d_odd_a1}, respectively. Finally, in Section \ref{sec:d_odd_aminus1} we prove Theorem A(c).

\section{Preliminaries and local dynamics near the origin}\label{sec:Model}

A preliminary  simple step is to show that, given $d\ge 2$ fixed, for most values of $a\ne 0$ the maps of the family $T_{a,d}$ in \eqref{eq:T} are conjugate to each other, so that we only need to deal with one or two particular values of $a$. See Corollary \ref{coro:ad} below.

\begin{lemma}\label{lem:a_scale}
We have the following statements.
\begin{enumerate}
\item [(a)] If $d$ is even and $a_1$ and $a_2$ are such that $a_1 a_2 \neq 0$ then $T_{a_1,d}$ is conjugate to $T_{a_2,d}$. 
\item[(b)] If $d$ is odd and  $a_1$ and $a_2$ are such that $a_1 a_2 >0$ then $T_{a_1,d}$ is conjugate to $T_{a_2,d}$. 
\end{enumerate}
\end{lemma}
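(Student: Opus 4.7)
My plan is to exhibit a one-parameter family of linear conjugacies between the maps $T_{a,d}$ as $a$ varies, and then analyze for which choices of the parameter the conjugacy is realized by a real map.

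First I would try the simplest possible candidate, a dilation $h_\lambda(x,y) = (\lambda x, \lambda y)$ with $\lambda \in \mathbb R \setminus \{0\}$. A direct computation gives
\begin{equation*}
h_\lambda^{-1} \circ T_{a,d} \circ h_\lambda (x,y) = \bigl( y - a\lambda^{d-1}(x+y)^d,\ y - 2a\lambda^{d-1}(x+y)^d \bigr) = T_{a\lambda^{d-1},\,d}(x,y),
\end{equation*}
because the outer factor $\lambda$ absorbs one power of $\lambda$ from $(\lambda x + \lambda y)^d = \lambda^d(x+y)^d$. Thus conjugation by $h_\lambda$ sends the parameter $a$ to $a\lambda^{d-1}$.

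To go from $T_{a_1,d}$ to $T_{a_2,d}$ it therefore suffices to solve $\lambda^{d-1} = a_2/a_1$ in $\mathbb R \setminus \{0\}$. This is where the parity of $d$ enters, which is precisely the main (and only nontrivial) point of the lemma. If $d$ is even then $d-1$ is odd, so $x \mapsto x^{d-1}$ is a bijection of $\mathbb R$ and we may take $\lambda = (a_2/a_1)^{1/(d-1)}$ with any sign, proving part (a). If $d$ is odd then $d-1$ is even, so $\lambda^{d-1} > 0$ for every real $\lambda \neq 0$; hence a real conjugating dilation exists if and only if $a_2/a_1 > 0$, i.e.\ $a_1 a_2 > 0$, which gives part (b).

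There is no real obstacle here; the only thing to check carefully is that $h_\lambda$ is indeed a homeomorphism (in fact a linear diffeomorphism) of $\mathbb R^2$ for every real $\lambda \neq 0$, so it realizes a genuine topological (and smooth) conjugacy of the corresponding discrete dynamical systems. The lemma then follows by specializing: in case (a) one typically takes $a_2 = 1$, and in case (b) one takes $a_2 = \mathrm{sgn}(a_1) \in \{+1,-1\}$, which is exactly the reduction to the three cases $(d\text{ even},\,a=1)$ and $(d\text{ odd},\,a=\pm 1)$ announced at the beginning of the section.
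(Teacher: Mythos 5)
Your proof is correct and is essentially identical to the paper's: both exploit the scaling identity $T_{a,d}(\lambda x,\lambda y)=\lambda T_{a\lambda^{d-1},d}(x,y)$ and then solve $\lambda^{d-1}=a_2/a_1$ over $\mathbb R$, with the parity of $d-1$ dictating the sign restriction. Nothing is missing.
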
 

\begin{proof}
The conjugation will be a rescaling. 
Given any $\mu \in \R$ we have that 
\[
T_{a,d}(\mu x, \mu y) = \mu \left( y - a\mu^{d-1} (x+y)^d , y - 2a\mu^{d-1}  (x+y)^d\right)  = \mu T_{a \mu^{d-1},d} (x,y).
\]
Given $a_1, a_2\ne 0$ we take
$$
\mu := ( a_2/a_1)^{1/(d-1)}. 
$$

If $d$ is even and $a_1$ and $a_2$ are two parameters with $a_1 a_2 \neq 0$  we immediately have
$$
T_{a_1,d}(\mu x, \mu y)= \mu T_{a_2,d}(x,y).
$$ 
If $d$ is odd the same is true but the existence of the $(d-1)$-root requires the condition $a_1a_2>0$.
\end{proof}

\begin{corollary} \label{coro:ad}
To study the dynamics of the family of maps given by \eqref{eq:T} it is enough to consider the cases $\{a=1,\ d\geq 2\}$ and $\{a=-1,\ d\geq 3, \ d \ \mbox{odd}\}$. 
\end{corollary}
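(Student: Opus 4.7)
The proof is a straightforward consequence of Lemma \ref{lem:a_scale}, and essentially amounts to selecting one representative in each conjugacy class of the family $\{T_{a,d}\}_{a \ne 0}$ for fixed $d \ge 2$. I would organize it by splitting on the parity of $d$.

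First, if $d$ is even, Lemma \ref{lem:a_scale}(a) tells us that for any $a \ne 0$ the map $T_{a,d}$ is conjugate to $T_{1,d}$ via the linear rescaling $(x,y) \mapsto (\mu x, \mu y)$ with $\mu = a^{-1/(d-1)}$. Since $d-1$ is odd, this $(d-1)$-th root is well defined in $\R$ for any $a \ne 0$. Thus the whole one-parameter family collapses (up to conjugacy) to the single case $a = 1$, which is included in the first set of the statement.

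Second, suppose $d$ is odd. Then $d-1$ is even, so $\mu^{d-1} > 0$ for any nonzero $\mu$, and the rescaling argument in Lemma \ref{lem:a_scale}(b) can only produce a conjugacy $T_{a_1,d} \sim T_{a_2,d}$ when $a_1$ and $a_2$ have the same sign. Applying this separately on $\{a > 0\}$ and $\{a < 0\}$, we obtain $T_{a,d} \sim T_{1,d}$ when $a > 0$ and $T_{a,d} \sim T_{-1,d}$ when $a < 0$. Both representatives appear in the statement of the corollary.

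Combining the two cases yields exactly the reduction claimed: the even-$d$ case and the positive-$a$ odd-$d$ case are covered by $\{a = 1,\ d \ge 2\}$, while the negative-$a$ odd-$d$ case is covered by $\{a = -1,\ d \ge 3,\ d \text{ odd}\}$. There is no real obstacle here; the only thing to be careful about is verifying that the required real $(d-1)$-th root exists in each case, which is precisely the parity bookkeeping carried out by Lemma \ref{lem:a_scale}.
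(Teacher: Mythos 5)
Your proof is correct and follows exactly the route the paper intends: the corollary is an immediate consequence of Lemma \ref{lem:a_scale}, applied separately according to the parity of $d$ and (for $d$ odd) the sign of $a$, with the only point of care being the existence of the real $(d-1)$-th root in the rescaling. The paper leaves this as an unstated consequence of the lemma, and your parity bookkeeping fills in precisely that verification.
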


 To avoid heavy notation (depending on the parameter $a=\pm 1$) in what follows  we assume $a=1$. We will deal with the case $a=-1$ (for $d$ odd) in Section \ref{sec:local}, Remark \ref{remark:amenys1}, and in Section \ref{sec:d_odd_aminus1}. In particular, when $a=1$, we will use the simplified notation
\begin{equation}\label{eq:Td}
T_{d}(x,y):=T_{1,d}(x,y)=
 \left(
\begin{array}{l}
y - (x+y)^d \\ 
y - 2(x+y)^d
\end{array}
\right).
\end{equation}

\begin{lemma}
	\label{lem:lines}  
We have
\begin{itemize}	
	\item[(a)] If $d$ is even, 
	$T_d$ sends $\R^2$ onto $\{x\ge y\}$. The map $T_d$ has two inverses
	\begin{equation}\label{eq:inverse_d_even}
		T^{-1}_{\pm, d} (x,y) = \left( -2x+y \pm  \left( x-y\right)^{1/d} , \, 
		2x-y \right)
\end{equation}  
	which determine two one to one maps  
	$T^{-1}_{+,d}:\{x\ge y\} \to \{x\ge -y\}$   and $T^{-1}_{-,d}: \{x\ge y\}\to \{x\le -y\}$.

	Moreover,  for any $x_0\in \R$,  $T_d$ maps the line $y=-x+x_0$ onto the line $y=x-x_0^d$ in a one-to-one way. 	
	\item[(b)] If $d$ is odd, the map $T_d:\R^2\to \R^2$ is a homeomorphism onto $\R^2$ and its inverse map is real analytic in $\R^2 \setminus \{x=y\} $, but not differentiable on $\{x=y\}$. Its inverse is  given by
	\begin{equation}\label{eq:T_inverse_odd}
		T^{-1}_{d} (x,y) = \left( -2x+y + \left( x-y\right)^{1/d} , \, 2x-y \right).
	\end{equation}
	Moreover, for any $x_0\in \mathbb R$, the map $T_d$ maps bijectively the line $y=-x+x_0$ onto the line $y=x-x_0^d$.
\end{itemize}
\end{lemma}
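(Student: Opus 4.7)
The plan is to verify every assertion by direct algebraic inversion, exploiting the fact that the two coordinates of $T_d(x,y)=(y-(x+y)^d,\,y-2(x+y)^d)$ differ only in the coefficient of $(x+y)^d$. Writing $(u,v)=T_d(x,y)$, I would first compute
\[
u-v = (x+y)^d,
\]
which forces $u\ge v$ when $d$ is even (giving image $\{x\ge y\}$) and allows every real value when $d$ is odd (giving image $\R^2$). From $y-(x+y)^d=u$ together with $(x+y)^d=u-v$ I obtain $y=u+(u-v)=2u-v$, and then $x=(x+y)-y$. For $d$ even, $(x+y)^d=u-v$ admits the two real solutions $x+y=\pm(u-v)^{1/d}$, producing the two branches
\[
T^{-1}_{\pm,d}(u,v)=\bigl(-2u+v\pm(u-v)^{1/d},\,2u-v\bigr),
\]
while for $d$ odd the $d$-th root is unique and yields a single inverse formula. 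Relabelling $(u,v)$ as $(x,y)$ gives \eqref{eq:inverse_d_even} and \eqref{eq:T_inverse_odd}.

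For part (a), to identify the images of the two branches I would use that the first plus the second coordinate of $T^{-1}_{\pm,d}(x,y)$ equals $\pm(x-y)^{1/d}$, so $T^{-1}_{+,d}$ lands in $\{x\ge -y\}$ and $T^{-1}_{-,d}$ lands in $\{x\le -y\}$. Bijectivity onto these half-planes follows because $T_d$ restricted to $\{x+y\ge 0\}$ (resp.\ $\{x+y\le 0\}$) fixes the sign of $(x+y)=(u-v)^{1/d}$, and the two half-planes cover $\R^2$ with overlap only on the line $x+y=0$, on which $T^{-1}_{+,d}$ and $T^{-1}_{-,d}$ coincide. For part (b), the explicit formula \eqref{eq:T_inverse_odd} exhibits $T_d^{-1}$ as a continuous map on $\R^2$, so $T_d$ is a homeomorphism; real analyticity off $\{x=y\}$ is immediate from the real analyticity of $t\mapsto t^{1/d}$ away from $t=0$, and differentiability fails along $\{x=y\}$ because $(x-y)^{1/d}$ has an infinite slope there.

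The statement about lines is a one-line substitution common to both parts: on $y=-x+x_0$ one has $x+y=x_0$, so
\[
T_d(x,-x+x_0) = \bigl(-x+x_0-x_0^d,\;-x+x_0-2x_0^d\bigr),
\]
whose two coordinates differ by the constant $x_0^d$; hence the image lies on the line $Y=X-x_0^d$, and since the first component $-x+x_0-x_0^d$ sweeps out $\R$ as $x$ does, the restriction is a bijection. I do not foresee any serious obstacle: the lemma is a bookkeeping exercise in inverting a map whose nonlinearity depends only on $x+y$, and the only mildly delicate point is pinning down the exact half-plane images of the two branches in the even case, which is resolved by tracking the sign of $x+y$.
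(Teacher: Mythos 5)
Your proposal is correct and is exactly the computation the paper has in mind: the paper's own proof of this lemma consists of the single sentence that all statements follow from direct computations, and your verification (solving $u-v=(x+y)^d$, $2u-v=y$ for the inverse branches, tracking the sign of $x+y$ to identify the half-plane images, and substituting $x+y=x_0$ for the line statement) supplies precisely those computations. No gaps.
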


\begin{proof}
All statements   come from  direct computations. 
\end{proof}

\section{Local dynamics around the origin: The stable and the center manifolds} \label{sec:local}

The origin is the only fixed point of the map $T_{d}$  in \eqref{eq:Td}. In this section we obtain information on the local dynamics near the origin from the analytic expressions (in series expansion) of the (local) invariant manifolds. The derivative of $T_d$ at $(x,y)$ is given by
\begin{equation}\label{eq:DT}
DT_{d}(x,y)=
 \left(
\begin{array}{cc }
 - d (x+y)^{d-1}&  \ \   1 -  d (x+y)^{d-1}  \\ 
-2d (x+y)^{d-1} & \ \ 1 -2d (x+y)^{d-1} 
\end{array}
\right),
\end{equation}
and therefore  
\[
DT_{d}(0,0)=
 \left(
\begin{array}{cc }
0&   1 \\ 
0 & 1 
\end{array}
\right).
\]
The matrix $DT_{d}(0,0)$ is independent of the parameter $d$. Its eigenvalues are $0$ and $1$ with associated eigenvectors $v_1=(1,0)$ and $v_2=(1,1)$, respectively. In other words the direction $v_1$ is super-attracting while the direction $v_2$ is neutral. It follows from the general theory of  invariant manifolds of fixed points of maps that there is a stable invariant manifold of $(0,0)$ being tangent to $v_1$ and a (non-unique) center invariant manifold of $(0,0)$ being tangent to $v_2$. We denote them $W^s_{d}(0)$ and $W^c_{d}(0)$, respectively. According to the general theory, $W^s_{d}(0)$ is analytic and $W^c_{d}(0)$ is $C^k$ for all $k\ge 1$. 
Even if $W^c_{d}(0)$ may not be unique, all its Taylor coefficients are uniquely determined.  

 More concretely, the local invariant manifolds can be parametrized as graphs
\begin{equation}
\label{def-inv-man-locals}
W^s_{d,\loc}(0) = \{ (x, \varphi^s_{d}(x) ) \mid \,  |x| < \varepsilon_0 \}  \qquad \text{ and } \qquad W^c_{d,\loc}(0) = \{ (x, \varphi^c_{d}(x) ) \mid \,  |x| < \varepsilon_0 \} ,
\end{equation}
for some $\varepsilon_0 >0$,  where 
\begin{equation} \label{eq:stable_center_local}
\varphi^s_{d}(x) = \sum_{n=2}^\infty \alpha_n(d) x^n \qquad  \mbox{and} \qquad   
\varphi^c_{d}(x) = x+ \sum_{n=2}^\infty \beta_n(d) x^n .
\end{equation}
We also denote by $R^s_{d}$ and $R^c_{d}$ the maps which encode the induced dynamics on the invariant manifolds. Thus, locally, we have 
\begin{equation}\label{eq:invariant}
T_{d} \left( x,\varphi_{d}^s(x) \right)  = \left( R^s_{d}(x), \varphi_{d}^s(R^s_{d}(x)) \right) 
\qquad \mbox{and} \qquad 
T_{d}\left(x,\varphi_{d}^c(x) \right)= \left( R^c_{d}(x), \varphi_{d}^c(R^c_{d}(x)) \right), 
\end{equation}
respectively. 

See \cite{HPS77,ParametrizationMethod1,ParametrizationMethod3} for a general discussion on the theory of local invariant manifolds. In the next lemma we provide the structure of the Taylor expansion of $\varphi^s_{d}$ and $\varphi^c_{d}$. The lower order terms will determine the local dynamics near the origin.

\begin{lemma} \label{lem:properties_invariant}
Let $d\geq 2$. The Taylor series of 
$\varphi^s_{d}$ and $\varphi^c_{d}$ have the following structure
\begin{equation} \label{eq:0coeff_stable_center}
\varphi^s_{d}(x)= x^d  \displaystyle \sum_{k=0}^{\infty} \alpha_{d+k(d-1)}(d) x^{k(d-1)} \qquad \mbox{and} \qquad  \varphi^c_{d}(x)= x+ x^d  \displaystyle \sum_{k=0}^{\infty} \beta_{d+k(d-1)}(d) x^{k(d-1)}.
\end{equation}
Moreover, $\alpha_d(d)= 2$, $\alpha_{2d-1}(d)=4d$,  $\beta_d(d)= -2^d$ and $\beta_{2d-1}(d)=-3d2^{2d-1}$ and thus we have that
$$
\varphi_d^s(x)=2x^d+ \mathcal O(x^{2d-1})
\qquad \mbox{and} \qquad \varphi_d^c(x)=x-2^dx^d+\mathcal O(x^{2d-1})
$$
and the one-dimensional dynamics induced by $T_d$ on the stable and center manifolds are governed by
\begin{equation}\label{eq:dynamics}
R^s_d: x \mapsto x^d+ \mathcal O(x^{2d-1}) \qquad \mbox{and} \qquad R^c_d: x \mapsto x-4^4x^d+ \mathcal O(x^{2d-1}),
\end{equation}
respectively.
\end{lemma}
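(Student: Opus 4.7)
The plan is to combine the two scalar components of the invariance equation \eqref{eq:invariant} into a single functional equation for each of $\varphi^s_d$ and $\varphi^c_d$, and then exploit a sparsity ansatz to read off both the shape of the expansion claimed in \eqref{eq:0coeff_stable_center} and the lowest Taylor coefficients. The existence of $\varphi^s_d$, the formal existence and uniqueness of $\varphi^c_d$ as a Taylor series, and their regularity are already guaranteed by the invariant-manifold theory cited in the excerpt, so only the coefficients have to be determined. Comparing the two components of \eqref{eq:invariant} gives $R(x) = \varphi(x) - (x+\varphi(x))^d$, so $R$ is determined by $\varphi$, and the problem reduces to solving the single scalar functional equation
\begin{equation*}
\varphi\bigl(\varphi(x) - (x+\varphi(x))^d\bigr) \;=\; \varphi(x) - 2\bigl(x+\varphi(x)\bigr)^d
\end{equation*}
as a formal power series, separately for $\varphi = \varphi^s_d$ and $\varphi = \varphi^c_d$.

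Next I would propose the ansatz $\varphi^s_d(x) = x^d f(x^{d-1})$ and $\varphi^c_d(x) = x + x^d g(x^{d-1})$ with $f, g$ univariate formal power series. A direct check, factoring out $x^d$ in the stable case and $x$ in the center case, shows that under either ansatz the quantities $(x+\varphi(x))^d$, $R(x)$, and $\varphi(R(x))$ are all of the form $x^{\sigma}\cdot A(x^{d-1})$ for a suitable $\sigma\in\{1,d\}$; the functional equation therefore collapses to an identity in the single variable $t = x^{d-1}$. This simultaneously shows that the ansatz is preserved and that the only nonzero Taylor coefficients of $\varphi^s_d$ and $\varphi^c_d$ occur at indices $n = d + k(d-1)$, $k \geq 0$, which is the content of \eqref{eq:0coeff_stable_center}.

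With the sparsity in hand, I would read off the two lowest coefficients in each case by matching terms of lowest order in the functional equation. For $\varphi^s_d(x) = \alpha_d(d)\,x^d + \alpha_{2d-1}(d)\,x^{2d-1} + O(x^{3d-2})$, the composition $\varphi^s_d(R^s_d(x))$ is of order $x^{d^2}$, which is strictly higher than $x^{2d-1}$ because $d^2 > 2d-1$ for $d \geq 2$; matching the $x^d$ and $x^{2d-1}$ coefficients on the right-hand side using $(x+\varphi^s_d(x))^d = x^d + 2d\,x^{2d-1} + O(x^{3d-2})$ gives $\alpha_d(d) = 2$ and $\alpha_{2d-1}(d) = 4d$. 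For $\varphi^c_d$ the linear term $x$ in the ansatz contributes to $\varphi^c_d(R^c_d(x))$ at every order, so the bookkeeping is slightly longer: matching at order $x^d$ gives $\beta_d(d) = -2^d$, after which a careful match at order $x^{2d-1}$ — tracking the contributions of $\beta_d(d)$ both through the linear part of $R^c_d(x) = x - 2^{d+1}x^d + O(x^{2d-1})$ and through the nonlinear piece $\beta_d(d)(R^c_d(x))^d$ of $\varphi^c_d(R^c_d(x))$ — yields $\beta_{2d-1}(d) = -3d\cdot 2^{2d-1}$. Finally, substituting these expansions into $R(x) = \varphi(x) - (x+\varphi(x))^d$ immediately produces $R^s_d(x) = x^d + O(x^{2d-1})$ and $R^c_d(x) = x - 2^{d+1}x^d + O(x^{2d-1})$.

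The main obstacle is the sparsity step: one must verify carefully that composition with $R$ and taking the $d$-th power preserve the congruence $n \equiv 1 \pmod{d-1}$ on the exponents, which is what makes the univariate ansatz consistent and determines the claimed skeleton. Once this is secured, the coefficient matching is routine bookkeeping with the binomial expansion, with the only minor subtlety being the need to keep track of second-order contributions of $\beta_d(d)$ in the center-manifold case.
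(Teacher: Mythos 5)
Your proposal is correct, and the computations it describes do produce the stated coefficients (I checked $\alpha_d=2$, $\alpha_{2d-1}=4d$, $\beta_d=-2^d$, $\beta_{2d-1}=-3d\,2^{2d-1}$ against the invariance equation). The starting point is identical to the paper's: both reduce \eqref{eq:invariant} to the single scalar identity $\varphi(\varphi(x)-(x+\varphi(x))^d)=\varphi(x)-2(x+\varphi(x))^d$, which is the paper's \eqref{eq:invariance}. Where you diverge is the sparsity step. The paper extracts from \eqref{eq:invariance} the explicit recursion \eqref{eq:term_by_term} for the $\alpha_n(d)$ and proves by induction on $N$ that each new coefficient $\alpha_{n+j}(d)$ first enters the recursion at order $n+d$ or higher, forcing $\alpha_n(d)=0$ off the arithmetic progression $n=d+k(d-1)$. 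You instead observe that the subspace of series of the form $x\,h(x^{d-1})$ is closed under $\varphi\mapsto(x+\varphi)^d$, under forming $R=\varphi-(x+\varphi)^d$, and under composition $\varphi\circ R$, so the functional equation collapses to an identity in $t=x^{d-1}$ and admits a formal solution within the ansatz class; this is cleaner and avoids the bookkeeping of \eqref{eq:term_by_term}. The one point you should make fully explicit is the last logical step: finding a formal solution of the ansatz form only yields the conclusion for $\varphi^s_d$ and $\varphi^c_d$ because the Taylor coefficients of these manifolds are \emph{uniquely} determined by the invariance equation together with the tangency conditions (you cite this, but it is the hinge on which the whole ansatz argument turns, so it deserves a sentence, including the remark that the reduced equation in $t$ determines $f$ and $g$ recursively and uniquely). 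Finally, your value $R^c_d(x)=x-2^{d+1}x^d+\mathcal O(x^{2d-1})$ is the correct one; the coefficient ``$4^4$'' appearing in \eqref{eq:dynamics} and in Remark \ref{remark:amenys1} is a typographical error for $2^{d+1}=2\cdot 2^d$, as one sees from $R^c_d(x)=\varphi^c_d(x)-(x+\varphi^c_d(x))^d=x+\beta_d(d)x^d-2^dx^d+\dots$ with $\beta_d(d)=-2^d$.
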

See Figure \ref{fig:local_behavior}(a) and (b) for the induced dynamics of the map $T_d$ on the invariant manifolds.

\begin{proof} To simplify the notation below we introduce the symbol $\{\cdot \}_n$ so that if $\Phi$ is a  formal series around the origin, we write
$$
\Phi(x)=\sum_{n\geq 0} \{\Phi\}_n \ x^n.
$$ 
We prove \eqref{eq:0coeff_stable_center} for the case of the stable manifold $W^s_{d}(0)$ (see  \eqref{eq:stable_center_local}). Using that the stable manifold  is an invariant graph for $T_d$ we obtain that  if $W^s_{d}(0)= \graph \varphi^s_{d}$ then
\begin{equation}\label{eq:invariance}
 \varphi^s_{d}(x) - 2 \left[ x+ \varphi^s_{d}(x) \right]^d = \varphi^s_{d} \left (  \varphi^s_{d}(x) -  \left[ x+ \varphi^s_{d}(x) \right]^d \right).
\end{equation}
From the above equation, some computations show that, on the one hand $\alpha_2(2)=2$ and $\alpha_2(d)=0$ for all $d\geq 3$, and on the other hand, for all $n \geq 3$ we have that  $\alpha_n(d)$ in \eqref{eq:stable_center_local} can be written recursively as
{
\small
\begin{equation}\label{eq:term_by_term}
\alpha_n(d)=  2 \left \{  \left( x+  \displaystyle \sum_{ j=2} ^{n-1} \alpha_j(d) x^j    \right)^d \right \}_n  +   \displaystyle \sum_{ i = 2} ^{n-1}    \alpha_i(d)   \left \{  \left(  \displaystyle \sum_{j=2} ^{n-1} \alpha_j(d) x^j  -  \left( x+  \displaystyle \sum_{ j=2} ^{n-1} \alpha_j(d) x^j    \right)^d  \right)^i   \right \}_n.
\end{equation}
}
Proving \eqref{eq:0coeff_stable_center} for the stable manifold is equivalent to see that in \eqref{eq:stable_center_local} the coefficient $\alpha_n(d)=0$  for all $n\geq 2$ such that $n-d$ is not a multiple of $d-1$, or equivalently, not of the form $n=d+k(d-1)$ for $k\geq 0$. We argue by induction. We claim that for any $N\ge 1$, up to order
$$
n=d+(N-1)(d-1)
$$ 
the stable manifold writes as
\begin{equation} \label{eq:forma_phi}
 x^d  \displaystyle \sum_{ k =0} ^{N-1} \alpha_{d+k(d-1)}(d) x^{k(d-1)} =: x^d \Psi(x^{d-1}).
\end{equation}
When $N=1$ the result is true since $\alpha_n(d) = 0$ for $2 \leq n \leq d-1$ and $\alpha_d(d)=2$. Indeed,  from \eqref{eq:term_by_term}, $a_j(d)=0$ implies $\alpha_{j+1}(d)=0$ for all $j=1,\ldots, d-2$. Also, $\alpha_d(d)=2$ since we have a unique term of degree $d$ with coefficient $2$ associated to the first $\{\Phi\}_n$ term in the right hand side of \eqref{eq:term_by_term}. 

Assuming the claim is true for $N$, we are going to prove that in the right hand side of \eqref{eq:term_by_term} the coefficients
$\alpha_{n+j}(d) x^{n+j} $, $j\ge 1$, are involved in terms of order $n+d$ or higher. This is easy to check for $j=1$. For $j>1$ the coefficients appear in terms of order bigger or equal than $n+d+1$.    
In the right hand side of \eqref{eq:term_by_term} the first term is
$$
2(x+x^d \Psi(x^{d-1}) + \alpha_{n+1}(d) x^{n+1} + \dots)^d 
$$
and the lower term in which $\alpha_{n+1}(d)$ appears is 
$2dx^{d-1}\alpha_{n+1}(d) x^{n+1} = 2d\alpha_{n+1}(d) x^{n+d}$. 
The second term of  the right hand side of \eqref{eq:term_by_term} can be written as
$$
\alpha_{d}(d) \left(x^d \Psi(x^{d-1})+ \alpha_{n+1}(d) x^{n+1}+\dots -
(x+x^d \Psi(x^{d-1})+ \alpha_{n+1}(d) x^{n+1}+\dots) ^d \right)^d
+ \dots$$
and the lower term in which $\alpha_{n+1}(d) $ appears is 
$2d (2x^d)^{d-1} \alpha_{n+1}(d) x^{n+1} =   \mathcal O(x^{n+d(d-1)+1})$.
This finishes the induction.

Once the expression of $\varphi^s_{d}$ given in \eqref{eq:0coeff_stable_center} is proved and the first terms of the expansion have been calculated we only need to justify the expression in \eqref{eq:dynamics}. For this we compute the image of a point on the  stable invariant manifold only using the lowest term of the series expansion 
\begin{align*}
T_d\left(x,\varphi^s_d(x)\right)& =\left(2x^d-\left(x+2x^d\right)^d, \, 2x^d-2\left(x+2x^d\right)^d\right)
\\& =\left(x^d+ \mathcal O(x^{2d-1}),  -4dx^{2d-1} +  \mathcal O(x^{3d-2)})\right).
\end{align*}
Therefore, the one-dimensional dynamics is given by 
\begin{equation*}
x \mapsto x^d+ \mathcal O(x^{2d-1}). 
\end{equation*}
Similar computations provide the result for $\varphi^c_{d}$. 
\end{proof}

\begin{remark}\label{remark:amenys1}
Using the same arguments as the ones in Lemma \ref{lem:properties_invariant} one can get similar results for the case $d$ odd and $a=-1$. The difference is the sign of some leading  coefficients. More precisely if $d$ is odd and $a=-1$ in the definition of $T_{d}$ we have that
$\alpha_d(d)= -2$, $\alpha_{2d-1}(d)=4d$,  $\beta_d(d)= 2^d$ and $\beta_{2d-1}(d)=-3d2^{2d-1}$ and hence we have that
$$
\varphi_d^s(x)=-2x^d+ \mathcal O(x^{2d-1})
\qquad \text{and} \qquad \varphi_d^c(x)=x+2^dx^d+\mathcal O(x^{2d-1})
$$
and the one-dimensional dynamics induced by $T_d$ over the stable and center manifold are governed by
\begin{equation*} 
x \mapsto -x^d+ \mathcal O(x^{2d-1}) \qquad \text{and} \qquad x \mapsto x+4^4x^d+ \mathcal O(x^{2d-1}),
\end{equation*}
respectively. See Figure \ref{fig:local_behavior}(c) for the induced dynamics of the map $T_d$ over the invariant manifolds in this case.
\end{remark}

\begin{figure}[ht]
    \centering
     \includegraphics[width=0.9\textwidth]{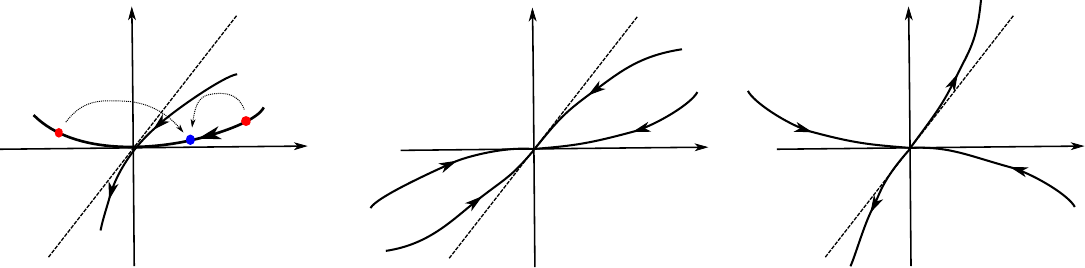}
        \put(-380,-15){\small (a)  $d$ even  and $a=1$}
     \put(-299,55){\small $W^s(0)$}
     \put(-312,72){\small $W^c(0)$}
             \put(-240,-15){\small (b)  $d$ odd  and $a=1$}
          \put(-160,82){\small $W^c(0)$}
     \put(-290,23){\small $W^s(0)$}
                  \put(-110,-15){\small (c)  $d$ odd  and $a=-1$}
     \put(-35,100){\small $W^c(0)$}
     \put(-12,12){\small $W^s(0)$}
           \caption{\small{Local dynamics of  $T_{a,d}$ near the origin.}}
    \label{fig:local_behavior}
    \end{figure}
We close this section by completing the discussion above, for the case $d$ even. We have shown in Lemma \ref{lem:properties_invariant} that many coefficients of the series expansion of the stable and center manifolds are zero (no mater the parity of $d$). Next we prove that, for $d$ even, all non-zero coefficients of $\varphi^s_{d}$ are positive. 

\begin{lemma}\label{lem:stable_positive}
Let $d$ be even. Then, $\alpha_\ell(d)\geq 0$ for all $\ell\geq 0$. \end{lemma}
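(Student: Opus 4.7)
The plan is to prove the result by strong induction on $n$, but with a dual nonnegativity statement: both $\alpha_n(d)\ge 0$ \emph{and} $\gamma_n\ge 0$, where $\gamma_n := \{R^s_{d}\}_n$ denotes the $n$-th Taylor coefficient of the induced one-dimensional map on the stable manifold from \eqref{eq:dynamics}. Strengthening the induction in this way is essential because the recursion \eqref{eq:term_by_term} involves the truncated series $B=\sum_{j=2}^{n-1}\alpha_j(d) x^j - A^d$, which agrees with $R^s_d$ in all degrees $<n$ and is a \emph{difference} of two manifestly nonnegative series; knowing only $\alpha_j\ge 0$ gives no direct control of the signs of the coefficients of $B$.

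First I would record two algebraic identities. Reading the first component of \eqref{eq:invariant} at a general Taylor coefficient gives
\[
\gamma_n \;=\; \alpha_n(d) - \{(x+\varphi^s_d)^d\}_n,
\]
and, since $\varphi^s_d = \mathcal O(x^d)$ with $d\ge 2$, the quantity $\{(x+\varphi^s_d)^d\}_n$ is a polynomial in the $\alpha_j(d)$ with $j<n$ having nonnegative integer coefficients (in particular $\alpha_n(d)$ itself does not appear on the right-hand side). Combining this with \eqref{eq:term_by_term}, and using that $B$ and $R^s_d$ agree in all degrees $<n$, and that for $i\ge 2$ the coefficient $\{B^i\}_n$ only involves $B$-coefficients of degrees $\le n-d$, the recursion can be recast as the pair
\[
\alpha_n(d) = 2\,\{(x+\varphi^s_d)^d\}_n + \sum_{i=2}^{n-1}\alpha_i(d)\,\{(R^s_d)^i\}_n,
\qquad
\gamma_n = \{(x+\varphi^s_d)^d\}_n + \sum_{i=2}^{n-1}\alpha_i(d)\,\{(R^s_d)^i\}_n.
\]

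The base case is supplied by Lemma~\ref{lem:properties_invariant}: all Taylor coefficients of $\varphi^s_d$ and of $R^s_d$ below degree $d$ vanish, and at degree $d$ one has $\alpha_d(d)=2$ and $\gamma_d=1$, both $\ge 0$. For the inductive step, assume $\alpha_j(d)\ge 0$ and $\gamma_j\ge 0$ for all $j<n$. Then $\{(x+\varphi^s_d)^d\}_n\ge 0$ by the positive-coefficient observation, and for every $i\ge 2$ the coefficient $\{(R^s_d)^i\}_n$ is a sum of products of $\gamma_j$'s with $j<n$ and hence is also $\ge 0$. Both displayed identities are thus sums of nonnegative quantities, yielding $\gamma_n\ge 0$ and $\alpha_n(d)\ge 0$ simultaneously, which closes the induction.

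The main obstacle I anticipate is precisely spotting the need for the coupled induction: a direct induction on the $\alpha_n(d)$ alone stalls at the contribution $\sum_i \alpha_i(d)\{B^i\}_n$, since $B = R^s_d$ is a cancellation between two nonnegative formal series and its signs are not automatic from positivity of the $\alpha_j(d)$. It is worth noting that the argument itself does not exploit the parity of $d$; it only uses $\alpha_d(d)>0$. The hypothesis ``$d$ even'' in the statement singles out the regime where, after the normalization $a=1$ of Corollary~\ref{coro:ad}, the leading coefficient $\alpha_d(d)=2$ is positive; by contrast, when $d$ is odd and $a=-1$ one has $\alpha_d(d)=-2<0$ by Remark~\ref{remark:amenys1}, and positivity of all $\alpha_\ell(d)$ genuinely fails at the very first step.
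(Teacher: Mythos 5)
Your proof is correct and is essentially the paper's argument: the paper also strengthens the induction, proving $\alpha_k\ge 2\gamma_k\ge 0$ with $\gamma_k:=\{(x+\varphi^s_d)^d\}_k$, which is exactly the statement that your $\{R^s_d\}_k=\alpha_k-\gamma_k$ is nonnegative (indeed $\ge\gamma_k$), and it feeds this into the same recursion \eqref{eq:term_by_term} to control the signs of the powers of $B=R^s_d$. Your closing observation that parity of $d$ plays no role in the algebra (only the sign of the leading coefficient $\alpha_d(d)$ does) is also accurate.
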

 
 \begin{proof}
In Lemma \ref{lem:properties_invariant} we proved that the coefficients of the series expansion of the analytic expression of the local stable manifold at the origin satisfy certain properties. In particular we proved that all coefficients $\alpha_\ell(d),\ \ell \geq 0,$ of the monomials $x^{\ell}$ with $\ell \ne d+k(d-1)$ for some $k\geq 0$ are zero. Moreover we also proved that $\alpha_d(d)=2$ for all $d\geq 2$. 

We fix $d\geq 2$ even. To simplify the notation we remove the dependence of the coefficients with respect to $d$; that is, we write $\alpha_k:=\alpha_k(d)$. Let $\ga(x)$ be the auxiliary analytic function given by the series expansion
$$
\ga(x) =\sum_{k=d}^\infty \ga_k x^k :=\left( x + \displaystyle \sum_{k = d}^{\infty}\alpha_k x^k  \right)^d.
$$
Note that $\ga_{k+1} $ depends on $\alpha_j$, $d\le j\le k$. The lemma follows from the following claim. 

\vglue 0.2truecm
\noindent {\bf Claim:} \textit{ If $n\ge d$ then, for all $d\le  k\le n$, we have $\alpha_k \geq 2\ga_k\geq 0$.}
\vglue 0.2truecm

We prove the claim by induction. For $n=d$ it is obviously true because  $\alpha_d=2$ and $\ga_d=1$. Assuming the claim is true for $n$, from  \eqref{eq:term_by_term} we can write 
\begin{equation} \label{eq:final}
	\alpha_{n+1}=  2 \left\{  \left( x + \displaystyle \sum_{k = d}^{n} \alpha_k x^k  \right)^d  \right\}_{n+1} +  
	\left\{  \displaystyle \sum_{i =d}^{n} \alpha_i  \left[  \displaystyle \sum_{k = d}^{n} \alpha_k x^k - \left( x + \displaystyle \sum_{k = d}^{n} \alpha_k x^k  \right)^d \right]^i  \right\}_{n+1}  .
\end{equation}
The induction assumption implies
\[
\alpha_k \geq 2 \left\{  \left( x + \displaystyle \sum_{j = d}^{k} \alpha_j x^j  \right)^d  \right\}_k    \ge   \left\{  \left( x + \displaystyle \sum_{j = d}^{k} \alpha_j x^j  \right)^d  \right\}_k  
, \qquad d\le  k \leq n.\] 
This implies that  all coefficients of the  terms of order $n+1$ of the second term of the right hand side of \eqref{eq:final} are non-negative (because $i\ge d\ge 2$). 
Then, we conclude from \eqref{eq:final} that $\alpha_{n+1}\geq 2\ga_{n+1}\ge 0$, and the claim follows.

\end{proof}

\section{Proof of Theorem A(${\rm a}$): The case $d$ even and $a=1$}\label{sec:d_even} 

Let $d\geq 2$ be even. From Corollary \ref{coro:ad} we can take $a=1$ to cover all cases $(a\ne 0$). We simplify the notation writing $T_d:=T_{1,d}$ and  $\mathcal A_d(0):=\mathcal A_{1,d}(0)$. We will show that the origin belongs to the boundary of the basin, that the basin is contained in the upper half plane and that its boundary is the stable manifold of the origin.  

Let us introduce some notation. Given $(x_0,y_0)\in \R^2$ we will write $(x_k,y_k) = T_d^k(x_0,y_0)$ for $k\ge 0$. Set
$$
R_d:= \big(1-\frac{1}{d}\big)  (2d)^{\frac{-1}{d-1}}  .
$$
Note that $R_2 = 1/8$ and, in general, $R_d<1$. Finally let 
\[
\TT= \{(x,y)\mid \ y\leq x\} \qquad \text{and} \qquad \TT_{R_d}= \{(x,y)\in \TT \mid \ y\ge 0,\ 0\le  x \le R_d\}.
\]

Since the proof of Theorem A(a) is quite long we split the arguments into several lemmas. The first one is just an observation.

\begin{lemma}\label{lem_easy}
We have that $(x_k,y_k)\in \TT$ for $k\ge 1$ and then the sequences
$\{x_k\}_{k\ge 1}$ and $\{y_k\}_{k\ge 1}$ are monotonically decreasing.
Moreover, while $x_k > - y_k$ the sequences are strictly decreasing.
\end{lemma}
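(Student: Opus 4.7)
The statement splits into two parts: first, that after one iterate the orbit lies in the half-plane $\TT = \{y\le x\}$ and stays there; second, that the coordinates decrease (weakly always, strictly while $x_k+y_k>0$). My plan is to verify both parts by direct one-step computations using the explicit formula
\[
T_d(x,y) = \bigl(y-(x+y)^d,\; y-2(x+y)^d\bigr)
\]
together with the parity hypothesis on $d$, and then propagate by induction.

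For the invariance of $\TT$, I would compute
\[
x_1 - y_1 = \bigl[y_0-(x_0+y_0)^d\bigr] - \bigl[y_0-2(x_0+y_0)^d\bigr] = (x_0+y_0)^d,
\]
and since $d$ is even this quantity is nonnegative for every $(x_0,y_0)\in\R^2$. Thus $(x_1,y_1)\in\TT$ regardless of the initial condition, and the same identity applied at step $k$ gives $x_{k+1}-y_{k+1}=(x_k+y_k)^d\ge 0$, so $(x_k,y_k)\in\TT$ for all $k\ge 1$ by induction.

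For the monotonicity, assuming $(x_k,y_k)\in\TT$ (so $y_k\le x_k$), I would compute the two first differences separately:
\[
y_{k+1} - y_k = -2(x_k+y_k)^d, \qquad
x_{k+1} - x_k = (y_k - x_k) - (x_k+y_k)^d.
\]
The first difference is $\le 0$ because $d$ is even and $(x_k+y_k)^d\ge 0$. The second is a sum of two nonpositive terms: $y_k-x_k\le 0$ by the invariance just established, and $-(x_k+y_k)^d\le 0$ again by parity. Hence both sequences are monotonically decreasing from $k=1$ onwards. For the strict statement, whenever $x_k>-y_k$ we have $x_k+y_k>0$, hence $(x_k+y_k)^d>0$, which makes $y_{k+1}<y_k$ strictly, and similarly $x_{k+1}<x_k$ strictly (the term $-(x_k+y_k)^d$ is already strictly negative, so the sign of $y_k-x_k\le 0$ does not matter).

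I do not anticipate a real obstacle here: everything reduces to the identity $x_1-y_1=(x_0+y_0)^d$ and to reading the signs of the two components of $T_d(x,y)-(x,y)$ on $\TT$, all of which are direct consequences of the parity of $d$. The only point one has to be careful about is the starting index: the invariance $(x_k,y_k)\in\TT$ is asserted only for $k\ge 1$, because no sign condition is imposed on $(x_0,y_0)$, and accordingly the monotonicity conclusion is stated from $k\ge 1$ as well.
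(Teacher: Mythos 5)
Your proposal is correct and follows essentially the same route as the paper: the paper cites Lemma \ref{lem:lines}(a) for the inclusion $(x_k,y_k)\in\TT$ for $k\ge 1$ (which, as you note, reduces to the identity $x_{k+1}-y_{k+1}=(x_k+y_k)^d\ge 0$ for $d$ even) and then reads off monotonicity from the same one-step inequalities $x_{k+1}=y_k-(x_k+y_k)^d\le y_k\le x_k$ and $y_{k+1}=y_k-2(x_k+y_k)^d\le y_k$. Your handling of the strict case via $(x_k+y_k)^d>0$ when $x_k>-y_k$ matches the paper's as well.
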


\begin{proof}
The first assertion follows from Lemma \ref{lem:lines}(a). The second one follows directly from the inequalities:
\begin{align}
	\label{decreixx}
x_{k+1} & = y_k - (x_k+y_k)^d \le y_k \le x_k, \quad k\ge 1 \quad (k\ge 0 \ \ \text{if} \ \ (x_0,y_0)\in \TT), \\
y_{k+1} & = y_k - 2(x_k+y_k)^d \le y_k,   \quad \qquad  k\ge 0.
\label{decreixy}
\end{align}
\end{proof}

\begin{figure}[ht]
    \centering
     \includegraphics[width=0.9\textwidth]{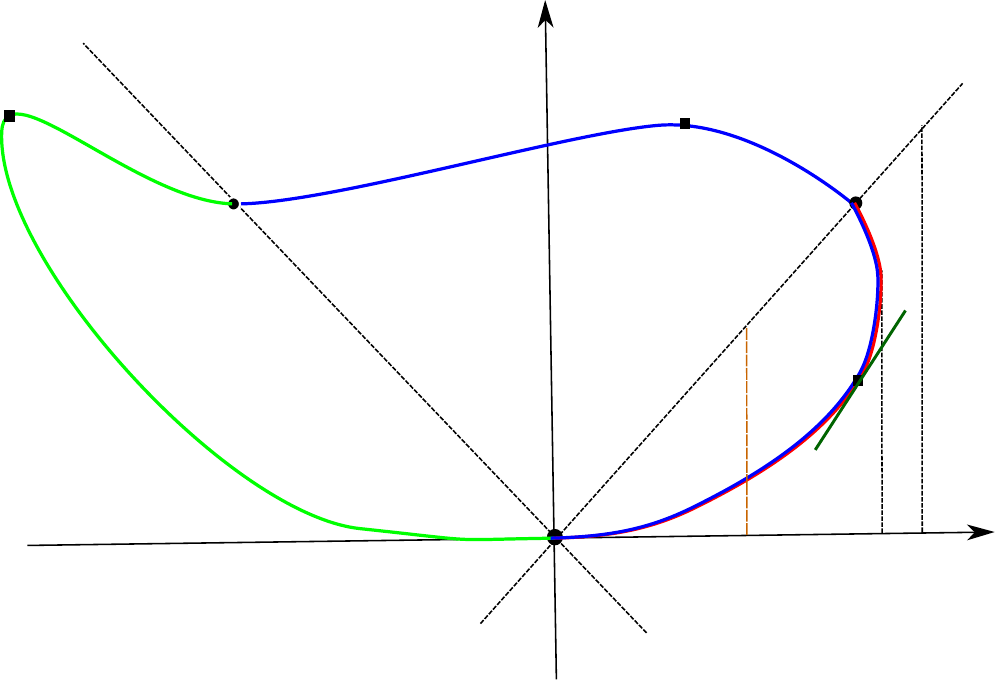}
     \put(-33,47){\small $R_d$}
          \put(-50,47){\small $r_0$}
          \put(-90,187){\small $(p,p)$}
                    \put(-304,201){\small $(-p,p)$}
                    \put(-227,145){$\mathcal A_{d}(0)$}
                    \put(-83,81){\small ${\r \Gamma}$}
                                        \put(-62,126){\small $q$}
                                        \put(-34,142){\small $m=2$}
                                        \put(-198,195){\small $\Gamma_+$}
                                            \put(-64,149){\small $ \Gamma_+$}
                                         \put(-122,230){\small $q^+$}
                                        \put(-388,230){\small $q^{-}$}
                    \put(-362,121){\small $\Gamma_-$}
          \put(-8,240){\small $y=x$}
                    \put(-350,245){\small $y=-x$}
                    \put(-100,47){\tiny $\rho_2$}
                    \put(-120,90){\tiny $\Omega_0^{+}$}
                    \put(-118,62){\tiny $\Omega_0^{-}$}
                    \put(-103,120){\small $\Omega_{r}$}                                      \put(-233,120){\small $J_{\ell}$}
                    \put(-130,120){\small $J_r$}
                    
           \caption{\small{Sketch of the construction of $\mathcal A_{d}(0)$. }}
    \label{fig:basin_d_even}
    \end{figure}

Next lemma shows that $\mathcal A_{d}(0)$ is a bounded set.
  
\begin{lemma}\label{fitaconca}
$\mathcal A_{d}(0)$ is a bounded  set. More concretely, 
$$
\mathcal A_{d}(0) \subset \K := (-5R_d, R_d)\times (0,2R_d) \cup \{(0,0)\}. 
$$
  
\end{lemma}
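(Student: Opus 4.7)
Let $(x_0,y_0)\in\mathcal A_d(0)\setminus\{(0,0)\}$, write $(x_k,y_k):=T_d^k(x_0,y_0)$ and $s_k:=x_k+y_k$. The plan is to prove each of the four one-sided inequalities defining $\mathcal K$ by combining the necessary condition $y_k\ge 0$ (valid along any orbit converging to the origin, since $d$ even makes $s_k^d\ge 0$ and hence $\{y_k\}_{k\ge 0}$ non-increasing by \eqref{decreixy}) with the forward-invariance of the basin, which lets me re-apply any bound obtained for $(x_0,y_0)$ to the iterate $(x_1,y_1)$.

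First I would sharpen this to $y_k>0$ for every $k$. Monotonicity and convergence to $0$ give $y_k\ge 0$; by \eqref{eq:inverse_d_even} the only preimage of $(0,0)$ under $T_d$ is itself, so no iterate of a non-origin point is the origin; and if $y_k=0$ with $x_k\ne 0$ then $T_d(x_k,0)=(-x_k^d,-2x_k^d)$ produces $y_{k+1}<0$, a contradiction. Next, using $y_1\ge 0$, i.e.\ $s_0^d\le y_0/2$, I would bound $x_0$: if $x_0\le 0$ the claim is trivial, and otherwise $s_0>0$ gives $x_0\le (y_0/2)^{1/d}-y_0$. A one-variable optimization shows that $\max_{y>0}\bigl[(y/2)^{1/d}-y\bigr]$ is attained at $y_0^{*}=2(2d)^{-d/(d-1)}$ with maximum value exactly $R_d=(1-1/d)(2d)^{-1/(d-1)}$. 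Equality would force $y_1=0$ and $x_1=(2d)^{-d/(d-1)}>0$, so by the first step applied to $(x_1,y_1)$ we would get $y_2<0$, contradicting the basin hypothesis; hence $x_0<R_d$ strictly.

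Applying this same inequality to $(x_1,y_1)\in\mathcal A_d(0)\setminus\{(0,0)\}$ gives $x_1<R_d$, and the identity $y_0=2x_1-y_1$ (direct from \eqref{eq:Td}) together with $y_1\ge 0$ then yields $y_0<2R_d$. For the final bound, $s_0^d\le y_0/2<R_d$ gives $|s_0|<R_d^{1/d}$, so $x_0=s_0-y_0>-R_d^{1/d}-2R_d$, and the proof would close with the numerical inequality $R_d^{1/d}\le 3R_d$, equivalently
\[
\Bigl(\tfrac{d-1}{d}\Bigr)^{d-1}\ge \tfrac{2d}{3^{d}}.
\]
I would check this by direct computation at $d=2$ (where it reduces to $\tfrac12\ge\tfrac49$), while for $d\ge 3$ it follows from the elementary estimates $\bigl((d-1)/d\bigr)^{d-1}\ge 1/e > 1/3$ and $2d\cdot 3^{-d}\le 2/9$.

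The main obstacle I expect is precisely this last numerical inequality: it is elementary but is the single place where the specific constant $5$ enters, and the margin at $d=2$ is uncomfortably small, so the verification must be done in closed form for all even $d\ge 2$ rather than by a soft asymptotic argument. Every other step reduces to the two ingredients described above: the sign-constraint $y_k\ge 0$ propagated through one iterate, and a single elementary maximization giving the constant $R_d$.
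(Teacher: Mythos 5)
Your proof is correct, and it is organized differently from the paper's. The paper proves the lemma by partitioning $\R^2\setminus\K$ into four explicit regions $\K_1,\dots,\K_4$ and showing that forward images of each region land where $y$ becomes (and stays) negative; you instead work directly with an arbitrary basin point and extract the four one-sided bounds as necessary conditions, propagating the constraint $y_k\ge 0$ (which, by monotonicity of $\{y_k\}$ for $d$ even, is forced along any orbit converging to the origin — the same observation the paper uses for $\K_1$ and again in its Lemma \ref{iteratsenT}) through one or two iterates. The key computation is shared: your maximization of $(y/2)^{1/d}-y$, with maximum exactly $R_d$, is the same optimization the paper performs on $\Psi_1(y)=y-2(x_0+y)^d$ when handling $\K_2$, just run in the contrapositive direction. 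What your route buys is a cleaner explanation of where the constant $R_d$ comes from, a slightly sharper left bound $x_0>-R_d^{1/d}-2R_d$ (the paper's constant $5$ then drops out of the elementary inequality $R_d^{1/d}\le 3R_d$, which you verify correctly, including the tight case $d=2$ where it reads $\tfrac12\ge\tfrac49$), and strict inequalities obtained by a short equality-case analysis rather than by region bookkeeping; the paper's route, in exchange, gives explicit escape dynamics for each piece of the complement, which it reuses later (e.g.\ in Lemma \ref{lem:cut}). All the individual steps check out: the identity $y_0=2x_1-y_1$, the fact that the origin is its own unique preimage, and the forward invariance of the basin needed to apply the bound $x_1<R_d$ to the first iterate.
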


\begin{proof}
We decompose 
$$\R^2 \setminus \K = \K_1 \cup \K_2 \cup \K_3 \cup \K_4, 
$$    
where
\begin{align*}
\K_1 & = \{(x,y)\mid\ y\le 0\} \setminus \{(0,0)\}, 	\\
\K_2 & = \{(x,y)\mid\ x\ge R_d\}, 	\\
\K_3 & = \{(x,y)\mid\ y\ge 2R_d\} , 	\\
\K_4 & = \{(x,y)\mid\ x \le -5R_d, \ 0< y <2R_d\} 
\end{align*}
and we argue that $\A_d(0) \cap \K_j = \emptyset$, $1\le j\le 4 $, so that 
$\A_d(0) \subset \K$. 
We will use the following property: by the invariance of $\A_d(0)$ by 
$T_d$ and its inverses we have that $T_{\pm,d}^{-1}$,  $(x_0,y_0) \in \A_d(0)$ if and only if 
 $(x_k,y_k) \in \A_d(0)$ for some $k\ge 0$.
 
Let $(x_0,y_0) \in \K_1$. Since $(x_0,y_0) \neq (0,0)$, if $y_0=0$ then
$y_1=-2x_0^d<0$ and if $y_0<0$ then
$y_1\le y_0<0$. Hence, in both cases 
$y_1<0$ and by \eqref{decreixy} the sequence of iterates cannot converge to $(0,0)$.

Next, we claim that if $(x_0,y_0) \in \K_2$ then $(x_1,y_1 ) \in \K_1$.
Indeed, we consider the line $\{x=x_0\}$ with $x_0\ge R_d$ and we look at the second component of its image $\Psi_1(y):= \pi_y T_d(x_0, y)= y - 2(x_0+y)^d$. Since $d\ge 2 $ is even, $\lim_{t\to \pm \infty} \Psi (y) = -\infty$  and therefore $\Psi_1$ has a global maximum. Actually, it has a unique maximum whose location is obtained from the condition $\Psi_1'(y) =0$ and is 
$y^{(m)}:= 1/(2d)^{1/(d-1)} -x_0 $.
  Then  $\Psi^{(m)}_{1} := \Psi_1(y^{(m)}) = 1/(2d) ^{1/(d-1)}
- 2/(2d)^{d/(d-1)} -x_0 =R_d -x_0$  and therefore $y_1\le 0$.
Moreover, $(x_1,y_1) \ne (0,0) $ because the only preimage of $(0,0 )$ is $(0,0) \notin \K_2$.

Next we take $(x_0,y_0) \in \K_3$ and we claim that $(x_1,y_1 ) \in \K_1 \cup \K_2$. 
Indeed, consider the line 
$\{ y=y_0\}$ with $y_0 \ge 2R$. Its image is contained in the line $\{(u,v) \mid \ v=2u-y_0\}$ and it is   contained in $\K_1 \cup \K_2$ because 
we have that either $u \ge R_d $ and the claim is true or $u < R_d $ and then $v = 2u -y_0 < 2R_d -y_0 \le 0$ and $(x_1, y_1)\ne (0,0)$.  

Finally, if $(x_0,y_0) \in \K_4$ we claim that  
$(x_1,y_1 ) \in \K_1 \cup \K_2$.
Indeed, notice that $x_0+y_0 < -5R_d + 2R_d =-3R_d$ and then $(x_0+y_0)^d > (3R_d)^d$. 
If  $x_1 \ge  R_d $ the claim is true. If  $x_1 < R_d $ then 
$y_1 = y_0 - 2(x_0+y_0)^d = x_1 -(x_0+y_0)^d 
< R_d -(3R_d)^d = (1-\frac{3^d}{2d}(1-\frac1d)^{d-1}) R_d < 0$. 
\end{proof}
\begin{lemma} \label{iteratsenT}
We have that $(x_0,y_0) \in \mathcal  A_{d}(0)$ if and only if $(x_k,y_k)\in \TT_{R_d}$ for all $k \geq 1$. 
\end{lemma}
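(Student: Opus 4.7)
The plan is to prove the two implications separately, leaning entirely on Lemmas \ref{lem_easy} and \ref{fitaconca} together with continuity of $T_d$ and uniqueness of the fixed point.

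For the forward direction, I would assume $(x_0,y_0)\in \mathcal A_d(0)$ and show that the iterates enter $\TT_{R_d}$ immediately after the first step. If $(x_0,y_0)=(0,0)$ the conclusion is trivial, so assume otherwise. By invariance of $\mathcal A_d(0)$ under $T_d$ and Lemma \ref{fitaconca}, $(x_k,y_k)\in \K\setminus\{(0,0)\}$ for every $k\ge 0$, giving $0< y_k < 2R_d$ and $-5R_d<x_k<R_d$. By Lemma \ref{lem_easy}, $(x_k,y_k)\in \TT$ for $k\ge 1$, and both sequences $\{x_k\}_{k\ge 1}$ and $\{y_k\}_{k\ge 1}$ are monotonically decreasing. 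Since they are decreasing and converge to $0$, every term must be $\ge 0$, hence $0\le y_k \le x_k \le R_d$ for all $k\ge 1$, i.e. $(x_k,y_k)\in \TT_{R_d}$.

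For the reverse direction, assume $(x_k,y_k)\in \TT_{R_d}$ for all $k\ge 1$. By definition of $\TT_{R_d}$,
\[
0\le y_k \le x_k \le R_d, \qquad k\ge 1.
\]
Lemma \ref{lem_easy} then applies (since $(x_1,y_1)\in \TT$): both sequences $\{x_k\}_{k\ge 1}$ and $\{y_k\}_{k\ge 1}$ are monotonically decreasing and bounded below by $0$, so they converge to some limit $(x^*,y^*)$ with $0\le y^*\le x^* \le R_d$. Continuity of $T_d$ forces $(x^*,y^*)$ to be a fixed point of $T_d$, and since the origin is the only fixed point, $(x^*,y^*)=(0,0)$. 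Therefore $(x_0,y_0)\in \mathcal A_d(0)$.

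I do not expect any serious obstacle here: the statement is essentially a trapping-region characterization, and the two nontrivial ingredients (the basin is contained in $\K$, and iterates are monotone in $\TT$) are already packaged in the previous lemmas. The only bookkeeping points are (i) excluding the origin when applying Lemma \ref{fitaconca} so that the strict inequalities $y_k>0$ and $x_k<R_d$ of $\K$ coexist consistently with the weak inequalities defining $\TT_{R_d}$, and (ii) noting that a strictly decreasing sequence with limit $0$ has all terms $\ge 0$, which turns the strict bound $y_k>0$ into the non-strict bound required by $\TT_{R_d}$.
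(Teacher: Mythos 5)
Your proof is correct and follows essentially the same route as the paper: the forward direction combines Lemma \ref{fitaconca} with the monotonicity from Lemma \ref{lem_easy} exactly as the paper does, and your observation that a decreasing sequence converging to $0$ must be nonnegative is the paper's ``if some $y_m<0$ then $y_k\le y_m<0$ forever'' argument phrased contrapositively. The only (harmless) difference is in the converse, where the paper identifies the limit as the origin by extracting $\lim_{k}(x_k+y_k)=0$ from the recurrence $y_{k+1}=y_k-2(x_k+y_k)^d$ and a sign argument, whereas you invoke continuity of $T_d$ together with uniqueness of its fixed point; both work, and yours is marginally cleaner since both coordinate sequences are already known to converge.
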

\begin{proof}
Assume  $(x_0,y_0)  \in \A_d(0)$. By Lemma \ref{lem:lines}(a), $x_k\ge y_k$ for all $k\ge 1$ and by Lemma \ref{fitaconca}, $x_k < R_d$ for all $k\ge 0$. Since the sequences $\{x_k\}$ and $\{y_k\} $ are decreasing for $k\ge 1$, if there exists $ m >0$ such that $y_m <0$, then $y_k\le y_m<0$ for all $k\ge m$ and $(x_k,y_k)$ cannot converge to $(0,0)$. Then,  $y_k\ge 0$ for all $k$ and the limit 
$y^\star = \lim_{k\to \infty} y_k $ exists, $y^\star \ge 0$ and then $x_k \ge y_k \ge 0$. As a consequence   $(x_k,y_k)\in \TT_{R_d}$ for all $k \geq 1$.

Conversely, let $(x_0,y_0)  \in \R^2$  and assume that $(x_k,y_k)\in \TT_{R_d}$ for all $k \geq 1$. Since the sequence $\{y_k\}_{k\geq 0}$ is strictly decreasing and bounded from below by 0 
there exists the limit 
$y^\star = \lim_{k\to \infty} y_k \ge 0$.
From the recurrence 
\begin{equation*}\label{eq:recurrence}
	y_{k+1} = y_k - 2 (x_k + y_k)^d 
\end{equation*} 
we obtain that  $\lim _{k\to \infty} (x_k+y_k) $ exists and it is 0. 
This implies that $-y^\star = \lim _{k\to \infty} x_k \ge 0 $. Then $y^\star =0$ and   $(x_0,y_0)  \in \A_d(0)$.
\end{proof} 

We now turn the attention to $\partial \mathcal A_d(0)$. Our goal is to prove that $\partial \mathcal A_d(0)$ coincides with the global stable manifold of the origin, $W^s_{d}(0)$.
\begin{lemma} \label{lem:cut}
$W^s_d(0)$ cuts the line $\{x=y\}$ at some point $(p,p)$ with $0<p < R_d$.	
\end{lemma}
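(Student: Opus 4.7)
The plan is to extend the local right branch of $W^s_d(0)$ backward under the inverse branch $T^{-1}_{+,d}$ from Lemma \ref{lem:lines}(a) and conclude via the intermediate value theorem.

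Set $\gamma_0 = \{(t,\varphi^s_d(t)) : 0 \le t \le \varepsilon_0\}$. By Lemma \ref{lem:properties_invariant}, $\varphi^s_d(t) = 2t^d + \mathcal O(t^{2d-1})$ with $d \ge 2$, so (shrinking $\varepsilon_0$ if needed) $\gamma_0 \subset \{0 \le y < x\}$ away from the origin; in particular $\gamma_0 \subset \image T_d$. Define $q_0 = (\varepsilon_0, \varphi^s_d(\varepsilon_0))$ and, as long as the preceding arc lies in $\{x \ge y\}$, set $q_n = T^{-1}_{+,d}(q_{n-1})$ and $\gamma_n = T^{-1}_{+,d}(\gamma_{n-1})$. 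Since $T_d(\gamma_0) \subseteq \gamma_0$ and $\gamma_n \subseteq \{x \ge -y\}$ (the range of $T^{-1}_{+,d}$), induction yields $\gamma_{n-1} \subseteq \gamma_n$; thus $\gamma_n$ is a continuous arc from the origin through $q_0, q_1, \ldots, q_n$, and by backward invariance $\gamma_n \subseteq W^s_d(0) \subseteq \mathcal A_d(0) \subseteq \K$ (Lemma \ref{fitaconca}).

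Write $q_n = (X_n, Y_n)$ and set $U_n = X_n + Y_n$, $V_n = X_n - Y_n$. A direct calculation from \eqref{eq:inverse_d_even} gives
\[
U_{n+1} = V_n^{1/d}, \qquad V_{n+1} = V_n^{1/d} - U_n - 3V_n.
\]
Suppose, for contradiction, that $V_n > 0$ for every $n \ge 0$, so the iteration proceeds indefinitely. The second identity then gives $U_{n+1} - U_n = V_n^{1/d} - U_n > 3V_n > 0$, so $\{U_n\}$ is strictly increasing; together with $U_n < 3R_d$ (from $q_n \in \K$), this yields convergence to some $U^\star \ge U_0 > 0$. Then $U_{n+1} - U_n \to 0$ combined with the same inequality forces $V_n \to 0$, whence $U_{n+1} = V_n^{1/d} \to 0$, contradicting $U^\star > 0$.

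Therefore there is a least $N \ge 1$ with $V_N \le 0$. On the arc $\gamma_N$ the continuous function $x - y$ equals $V_{N-1} > 0$ at $q_{N-1}$ and $V_N \le 0$ at $q_N$, so by the intermediate value theorem there exists $(p,p) \in \gamma_N \cap \{x = y\}$; moreover $p > 0$ because $x - y$ is strictly positive along $\gamma_0 \subseteq \gamma_N$ off the origin. Since $(p,p) \in \gamma_N \subseteq W^s_d(0) \subseteq \mathcal A_d(0) \subseteq \K$, we finally obtain $p < R_d$. The main obstacle is the contradiction step above: it crucially combines the explicit form of $T^{-1}_{+,d}$ with the uniform bound $\mathcal A_d(0) \subseteq \K$ to rule out that the backward endpoints remain strictly below the diagonal.
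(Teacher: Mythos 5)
Your proof is correct and follows essentially the same route as the paper's: both globalize the local branch of $W^s_d(0)$ by backward iteration with $T^{-1}_{+,d}$, use the boundedness of the basin (Lemma~\ref{fitaconca}) to make a monotone quantity along the backward orbit converge, and then extract a contradiction from the explicit inverse formula to force a crossing of $\{x=y\}$ inside $\K$. The only cosmetic difference is that you track $U_n=X_n+Y_n$ and $V_n=X_n-Y_n$, where the paper works directly with the increasing sequence $y_{-k}$ and the relation $x_{-k}=(y_{-k}+y_{-k-1})/2$.
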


\begin{proof}
In Lemma \ref{lem:stable_positive} it is proven that the local expression of $W^s_{d}(0)$ is given by the graph of an analytic function $\varphi_d^s(x)= 2 x^d + \dots $ whose series expansion in the $x$-variable has all its coefficients non-negative, and therefore there exists $\rho_1 >0$ such that $\gamma = \{(x,\varphi_d^s(x))\mid \ x\in (0,\rho_1)\}$ is contained in $\TT_{R_d}$. We claim that the extension of this local piece $\gamma$ of $W^s_{d}(0)$ eventually leaves $\TT_{R_d}$. Indeed, assume the contrary.
We globalize $\gamma$  iterating with $T^{-1}_{+,d}$ (see \eqref{eq:inverse_d_even}). 
Let $(x_0,y_0) \in \gamma$ and denote 
 $$
\left(x_{-k},y_{-k}\right)=T^{-k}_{+,d}(x_0,y_0),\qquad  k\geq 0.
$$
We have 
\begin{equation}\label{recpery-}
y_{-k-1} = y_{-k} + 2 (x_{-k}-y_{-k}) > y_{-k}.
\end{equation}
If all $\left(x_{-k},y_{-k}\right) \in \TT_{R_d}$ we have that the sequence $\{y_{-k}\}_{k \geq 0}$ is strictly increasing and bounded, and we conclude that there exists $y^{\star}>0$ such that 
\begin{equation} \label{limityk}
	\displaystyle y^\star=\lim_{k \to \infty} y_{-k} >0.
\end{equation}
Moreover, from \eqref{recpery-} we have that 
$x_{-k} =  (y_{-k} +y_{-k-1} )/ 2  \to y^\star$. 
Now, using the recurrence 
$$
x_{-k-1} = -2x_{-k} + y_{-k} + (x_{-k} - y_{-k})^{1/d},  
$$
we get that $y^\star=0$, which provides a contradiction with \eqref{limityk}.
Finally, since we have seen that $\A_d(0)$ does not meet $\{y=0\}\setminus \{(0,0)\}$ nor $\{x=R_d\}$ the globalization of $\gamma $ has to cross $\{x=y\}$. 
\end{proof}

We denote by $\Gamma$ the piece of the stable manifold $W^s_{d}(0)$ from $(0,0) $ to $(p,p)$ contained in  $\TT_{R_d}$.
We plot $\Gamma$ in red colour in Figure~\ref{fig:basin_d_even}.  
Let $\varphi^{s}_{d}$ be given in \eqref{def-inv-man-locals}.

\begin{lemma} \label{lem:q_x}
The following properties for $\varphi_d^s$ hold.
\begin{itemize}
\item[(a)] There exists a unique point $\bar q=\left(\bar q_x,\bar q_y\right)\in \Gamma$ whose tangent vector has slope $m=1/2$. 
\item[(b)] If we denote by $r_0>0$ the radius of convergence of $\varphi_d^s$ (as a function of a complex variable) then $0<r_0<R_d$, $\varphi^{s}_{d}$ is 
increasing and convex in the interval $(0,r_0) $
and decreasing and convex  $(-r_0-2\varphi^{s}_{d}(r_0),0 )$.
\end{itemize} 
\end{lemma}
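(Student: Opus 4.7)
The plan is to exploit the non-negativity of the Taylor coefficients of $\varphi_d^s$ (Lemma \ref{lem:stable_positive}) together with Pringsheim's theorem to locate a vertical tangent of $\Gamma$ at $x = r_0$, and then use the backward-iteration relation \eqref{recpery-} to handle uniqueness on the returning arc.

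For part (a), Lemma \ref{lem:stable_positive} gives $\alpha_n(d) \ge 0$ for all $n$ with $\alpha_d(d) = 2 > 0$, so $\varphi_d^s$ and each of its derivatives has a power-series representation with non-negative coefficients on $(0, r_0)$. In particular, $(\varphi_d^s)'$ is continuous, strictly increasing, with $(\varphi_d^s)'(0) = 0$. Since the graph of $\varphi_d^s$ on $[0, r_0)$ is contained in the bounded set $\K$ (Lemma \ref{fitaconca}), the limit $\lim_{x\to r_0^-}\varphi_d^s(x)$ is finite and the point $(r_0, \lim_{x\to r_0^-}\varphi_d^s(x))$ lies in $\overline{\Gamma} \subset W^s_d(0)$. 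If the tangent to the real-analytic curve $W^s_d(0)$ at this point were non-vertical, locally $W^s_d(0)$ would be the graph of a real-analytic function extending $\varphi_d^s$ past $r_0$, contradicting the definition of $r_0$. Hence the tangent is vertical and $(\varphi_d^s)'(x) \to +\infty$ as $x \to r_0^-$. The intermediate value theorem then produces a unique $\bar q_x \in (0, r_0)$ with $(\varphi_d^s)'(\bar q_x) = 1/2$, and we set $\bar q := (\bar q_x, \varphi_d^s(\bar q_x))$. For uniqueness on the full curve $\Gamma$, the complementary arc from the vertical-tangent point to $(p, p)$ has strictly increasing $y$-coordinate by \eqref{recpery-} and $x$-coordinate bounded above by $r_0$, so along this arc the tangent has strictly positive $y$-component and non-positive $x$-component, giving non-positive slope that cannot equal $1/2$.

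For part (b), the graph of $\varphi_d^s$ on $[0, r_0)$ lies in $\Gamma \subset \mathcal A_d(0) \subset \K \subset \{x < R_d\}$, giving $r_0 \le R_d$; strict inequality holds because $r_0 = R_d$ would place the limit point $(r_0, \lim_{x\to r_0^-}\varphi_d^s(x)) \in W^s_d(0) \subset \mathcal A_d(0)$ on $\{x = R_d\}$, contradicting $\mathcal A_d(0) \cap \{x = R_d\} = \emptyset$. ``Increasing and convex on $(0, r_0)$'' is immediate from the non-negativity of the Taylor coefficients. For the behavior on $(-r_0 - 2\varphi_d^s(r_0), 0)$, the function must be analytically continued past $-r_0$, which is possible because Pringsheim places the forced singularity at $+r_0$ and leaves the negative real axis unobstructed. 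The leading terms $\varphi_d^s(x) = 2x^d + 4d\,x^{2d-1} + O(x^{3d-2})$ with $d$ even give $(\varphi_d^s)'(x) \sim 2d\,x^{d-1} < 0$ and $(\varphi_d^s)''(x) \sim 2d(d-1)\,x^{d-2} > 0$ for $x$ near $0$ with $x < 0$, so $\varphi_d^s$ is decreasing and convex on a one-sided neighborhood of $0$. The plan is to identify the extended graph with the pullback $T^{-1}_{-,d}(\Gamma)$, the second-quadrant branch of $W^s_d(0)$ obtained using the other inverse from Lemma \ref{lem:lines}(a), which is a real-analytic curve in the second quadrant parametrizable as a graph over $x$, and to evaluate this parametrization at the right endpoint $x_0 = r_0$ of $\Gamma$'s graph portion to obtain the precise left endpoint $-r_0 - 2\varphi_d^s(r_0)$.

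The main obstacle is the final assertion of (b): propagating the monotonicity and convexity from a neighborhood of $0$ to the whole extended interval and verifying that the left endpoint is \emph{exactly} $-r_0 - 2\varphi_d^s(r_0)$. I expect this to require either a signed-coefficient induction on the negative real axis (analogous to the induction underlying Lemma \ref{lem:stable_positive}) or a careful analysis of the functional equation \eqref{eq:invariance} combined with the explicit formula for $T^{-1}_{-,d}$ from Lemma \ref{lem:lines}(a).
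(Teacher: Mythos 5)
Your treatment of part (a) and of the claims $0<r_0<R_d$ and ``increasing and convex on $(0,r_0)$'' follows the same route as the paper (non-negative coefficients plus Vivanti--Pringsheim, containment of the graph in $\K$, and an extension-by-analyticity contradiction for the vertical tangent), but two steps need repair. First, to run the vertical-tangent argument you must justify that $W^s_{d}(0)$ is a real-analytic curve at the limit point $(r_0,\varphi^s_d(r_0))$; the paper obtains this by writing the piece of $W^s_{d}(0)$ near that point as the image under $T^{-1}_{+,d}$ of a piece of the graph closer to the origin which avoids the diagonal $\{x=y\}$, where $T^{-1}_{+,d}$ fails to be analytic. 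Without that, ``the real-analytic curve $W^s_d(0)$'' is an assertion, not a fact, and the contradiction with the definition of $r_0$ does not yet follow. Second, your uniqueness argument on the returning arc of $\Gamma$ is a non sequitur: that the $x$-coordinate is \emph{bounded above} by $r_0$ does not make the $x$-component of the tangent non-positive, and \eqref{recpery-} controls the $y$-coordinates of discrete backward iterates, not the monotonicity of $y$ along the continuous arc. (Note that the paper later exhibits a point $q\in\Gamma$ at which the slope equals $+2$, so the slopes occurring off the graph portion must be handled with more care than ``non-positive''.)

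The more serious gap is the one you flag yourself: the extension of $\varphi^s_d$ to $(-r_0-2\varphi^s_d(r_0),0)$ and the monotonicity and convexity there. Your appeal to Pringsheim to leave ``the negative real axis unobstructed'' is not valid: Pringsheim guarantees a singularity at $+r_0$ but says nothing about the absence of one at $-r_0$, and in any case provides no continuation beyond the disc of convergence, whereas the interval in question reaches past $-r_0$. The missing idea is the involution symmetry $T_d(x,y)=T_d(-2y-x,y)$, valid because $d$ is even, which forces the identity $\varphi^{s}_{d}(-x-2\varphi^{s}_{d}(x))=\varphi^{s}_{d}(x)$. This identity simultaneously produces the extension on exactly the interval $(-r_0-2\varphi^s_d(r_0),0)$ and, after differentiating it twice and eliminating the first-derivative term, yields that $\varphi^s_d$ is decreasing and convex on that whole interval. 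Your proposed alternative (identifying the extension with the second-quadrant preimage branch) points in the right direction, but without this explicit functional identity you have no mechanism to propagate the sign conditions beyond a one-sided neighbourhood of $0$, which is precisely where your argument stops.
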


\begin{proof}
We observe that since all coefficients of the series expansion of $\varphi_d^s$ are non-negative (see Lemma \ref{lem:stable_positive}) we conclude from Vivanti-Pringsheim's Theorem \cite{Evg} that $\varphi^s_{d}$ as a function of a complex variable has a singularity at $x=r_0>0$ and 
$$
\varphi_0:  = \varphi^s_{d}(r_0)= \displaystyle \sum_{k \geq d}  \alpha_k r_0^k.
$$  
In fact we have that $r_0<R_d< \infty$ and $\varphi_0< 2R_d$ since   $\graph \varphi_d^s\subset W_d^s(0) \subset \mathcal A_d(0)$ and by Lemma \ref{fitaconca}, $\mathcal A_d(0) \subset \K$. In particular $\varphi^s_{d}|_{(0,r_0)}$ is an increasing and convex function
and 
$$
\displaystyle \lim_{x \to r_0^-}  \left(\varphi^{s}_{d}\right)^{\prime} (x) = + \infty.
$$ 
Indeed, if $\left(\varphi^{s}_{d}\right)^{\prime} (r_0) < \infty$, then  
$\left(\varphi^{s}_{d}\right)^{\prime} $ could be extended in a differentiable way for $x>r_0$. The graph close to $x=r_0$ will be the image by $T^{-1}_{+,d}$ of a piece of the graph of $\varphi^{s}_{d}$, say $\gamma_2$, closer to the origin. The piece $\gamma_2$ does not contain the point $(p,p)$ since its image is the point $(-p,p)$ outside $\TT_{R_d}$. Therefore $T^{-1}_{+,d}$ is analytic on $\gamma_2$ and then $\varphi^{s}_{d}$ would be analytic in a neighborhood of $r_0$ which provides a contradiction. This proves statement (a) and provides the existence of $r_0$.

We have the symmetry ($d$ is even)  
\begin{equation} \label{simetria}
T_d\left(x,y\right)=T_d\left(-2y-x,y\right).	
\end{equation}
Hence if $(x,y)\in W_d^s(0)$ then also $(-2y-x,y)\in W_d^s(0)$. 
More concretely, since $(x,\varphi^{s}_{d}(x)) \in W^s_{d}(0)$, $(-x-2\varphi^{s}_{d}(x), \varphi^{s}_{d}(x)) \in W^s_{d}(0)$ and then 
\begin{equation} \label{simetria-phi}
\varphi^{s}_{d} (-x-2\varphi^{s}_{d}(x)) = \varphi^{s}_{d}(x).	
\end{equation}
This means that $\varphi^{s}_{d}$ is defined for $x\in (-r_0-2\varphi^{s}_{d}(r_0),0 )$. Moreover, taking derivatives in \eqref{simetria-phi} we get 
\begin{align}
& (\varphi^{s}_{d}) '(-x-2\varphi^{s}_{d}(x))(-1-2(\varphi^{s}_{d})'(x))  = (\varphi^{s}_{d})'(x), \label{cond-der1}\\	 
& (\varphi^{s}_{d}) ''(-x-2\varphi^{s}_{d}(x))(-1-2(\varphi^{s}_{d})'(x))^2 + (\varphi^{s}_{d}) '(-x-2\varphi^{s}_{d}(x))(-2(\varphi^{s}_{d})''(x))  = (\varphi^{s}_{d})''(x),\label{cond-der2}
\end{align}
and hence we can conclude that $\varphi^{s}_{d}$ is decreasing and convex in $(-r_0-2\varphi^{s}_{d}(r_0),0 )$. Indeed, substituting $(\varphi^{s}_{d}) '(-x-2\varphi^{s}_{d}(x))$ from \eqref{cond-der1} into \eqref{cond-der2} we obtain 
$$
(\varphi^{s}_{d}) ''(-x-2\varphi^{s}_{d}(x))(-1-2(\varphi^{s}_{d})'(x))^2
=   (\varphi^{s}_{d})''(x) \left(1- \frac{2(\varphi^{s}_{d}) '(x)}{1+ 2(\varphi^{s}_{d}) '(x)} \right) >0.
$$
From the previous properties there exists a unique point $\bar q=\left(\bar q_x,\bar q_y\right)\in \Gamma$ whose tangent vector has slope $m=1/2$.
\end{proof}

Let 
\begin{equation*}
\begin{split}
\Omega_{0}^+ &= \{(x,y)\in \TT_{R_d} \mid \, y \ge \varphi_d^s(x),\, 0\le x\le \rho_2 \}, \\
\Omega_{0}^- &= \{(x,y)\in \TT_{R_d} \mid \, 0 < y< \varphi_d^s(x),\, 0\le x\le \rho_2 \},
\end{split}
\end{equation*}
with $\rho_2 < \min \left\{ \frac{1}{2}\frac{1}{(4d)^{1/(d-1)}}, \, \bar q_x\right\}$. See Figure 
\ref{fig:basin_d_even}.

\begin{lemma}\label{lem:Omega}
The domain  $\Omega_{0}^+$ is invariant by $T_d$ and 
		$\Omega_{0}^+ \subset \A_d(0)$. Moreover,   
		$\Omega_{0}^- \cap\A_d(0) =\emptyset$.
\end{lemma}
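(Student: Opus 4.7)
The plan is to handle the three claims in sequence, exploiting throughout the monotonicity in $y$ of the two components of $T_d$ and the invariance of the stable manifold. I would first rewrite $T_d=(F,H)$ with $F(x,y)=y-(x+y)^d$ and $H(x,y)=y-2(x+y)^d$. The choice $\rho_2<\tfrac{1}{2}(4d)^{-1/(d-1)}$ implies $(x+y)^{d-1}<1/(4d)$ for $(x,y)\in\Omega_0^+$, and therefore $\partial_yF>3/4$ and $\partial_yH>1/2$ on $\Omega_0^+$. Combined with the invariance identity
\[
T_d(x,\varphi^s_d(x))=(R^s_d(x),\varphi^s_d(R^s_d(x)))
\]
and Lemma~\ref{lem:stable_positive} (which yields $R^s_d(x_0)\in[0,x_0]$ and $\varphi^s_d(R^s_d(x_0))\ge 0$ for $x_0\in[0,\rho_2]$), the $y$-monotonicity of $F$ gives $x_1\ge R^s_d(x_0)\ge 0$ whenever $(x_0,y_0)\in\Omega_0^+$, while $x_1\le y_0\le x_0\le\rho_2$ is immediate. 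For the key inequality $y_1\ge\varphi^s_d(x_1)$, I would introduce
\[
G(y):=H(x_0,y)-\varphi^s_d(F(x_0,y)),
\]
which vanishes at $y=\varphi^s_d(x_0)$ by the invariance identity, and verify $G'(y)>0$ on the relevant range (the first summand $1-2d(x_0+y)^{d-1}$ exceeds $1/2$, while $(\varphi^s_d)'(x)=O(x^{d-1})$ is uniformly small by the $\rho_2$-bound). Strict monotonicity of $G$ then propagates $y_0\ge\varphi^s_d(x_0)$ to $y_1\ge\varphi^s_d(x_1)\ge 0$; together with $y_1\le x_1$ from Lemma~\ref{lem:lines}(a), this gives $(x_1,y_1)\in\Omega_0^+$. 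The inclusion $\Omega_0^+\subset\mathcal{A}_d(0)$ is then immediate from Lemma~\ref{iteratsenT}, since forward iterates stay in $\Omega_0^+\subset\TT_{R_d}$.

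For $\Omega_0^-\cap\mathcal{A}_d(0)=\emptyset$ I would argue by contradiction: assume $(x_0,y_0)\in\Omega_0^-\cap\mathcal{A}_d(0)$. By Lemma~\ref{iteratsenT} the orbit stays in $\TT_{R_d}$ and $(x_k,y_k)\to(0,0)$. The same $G$-computation, now applied at each step with $y_k<\varphi^s_d(x_k)$, gives $y_{k+1}<\varphi^s_d(x_{k+1})$, so inductively $(x_k,y_k)\in\Omega_0^-$ for all $k$ (any step at which $y_{k+1}\le 0$ would already violate Lemma~\ref{iteratsenT}). Setting $\eta_k:=\varphi^s_d(x_k)-y_k>0$, the mean value theorem applied to the step-$k$ analogue $G_k$ of $G$ gives $\eta_{k+1}=G_k'(\xi_k)\eta_k$ with $G_k'(\xi_k)=1-O(x_k^{d-1})$. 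On the other hand, the $y$-monotonicity of $F$ combined with $y_k<\varphi^s_d(x_k)$ yields $x_{k+1}<R^s_d(x_k)=x_k^d+O(x_k^{2d-1})$, whence $\{x_k\}$ decays super-geometrically and $\sum_k x_k^{d-1}<\infty$. Consequently the product $\prod_j G_j'(\xi_j)$ converges to some $c>0$, giving $\eta_k\ge c\eta_0$ for all $k$. Since $\varphi^s_d(x_k)=O(x_k^d)\to 0$, eventually $\eta_k>\varphi^s_d(x_k)$, forcing $y_k<0$ and contradicting $(x_k,y_k)\in\TT_{R_d}$.

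The hard part will be the quantitative rate comparison in the disjointness argument. The positivity $G'>0$ (driven by the $\rho_2$-bound) is the essential mechanism for propagating the position of the orbit on one side of the stable manifold; converting it into the sharper conclusion that the gap $\eta_k$ stays bounded away from zero while $\varphi^s_d(x_k)$ decays at rate $x_k^d$ requires simultaneously controlling $G_k'(\xi_k)=1-O(x_k^{d-1})$ precisely and establishing the super-geometric decay $x_k\lesssim x_0^{d^k}$ uniformly for all orbits in $\Omega_0^-$. Making these rates play well together, so that $\prod_j G_j'(\xi_j)$ stays bounded below by a positive constant, is the delicate step.
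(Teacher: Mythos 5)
Your proposal is correct and follows essentially the same route as the paper: the first part is the paper's computation $y_1-\varphi^s_d(x_1)=(y_0-\varphi^s_d(x_0))(1-\wh H)$ with $\wh H<1$ forced by the two $\rho_2$-bounds (your $G'(y)>0$ is exactly $1-\wh H>0$, modulo the small care needed when $F(x_0,y)<0$, where one uses the positivity of the coefficients of $\varphi^s_d$ to bound $|(\varphi^s_d)'|$), and the second part is the paper's argument in the coordinates $(x,y-\varphi^s_d(x))$, where your gap $\eta_k$ is the flattened $-y_k$, the contraction factor is $1-bx_k^{d-1}$, and the convergence of $\prod_j(1-bx_j^{d-1})$ follows from the super-geometric decay $x_{k+1}<2x_k^d$. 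The only difference is presentational (direct tracking of $\eta_k$ versus the explicit change of variables $C(x,y)=(x,y+\varphi^s_d(x))$), so no further comparison is needed.
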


\begin{proof}
Let $(x,y):=(x_0,y_0) \in \Omega_{0}^+$. The sequence $\{x_k\}_{k\ge 0}$
is decreasing by Lemma  \ref{lem_easy} . So, it is enough to show that $y_1-\varphi^{s}_{d}(x_1) \geq 0$. Indeed 
\begin{equation} \label{fitadiffyph}
\begin{aligned}
y_1-\varphi^{s}_{d}(x_1) &= y - 2(x+y)^d - \varphi^{s}_{d}(y - (x+y)^d) \\
& = y - \varphi^{s}_{d}(x) + \varphi^{s}_{d}(x) - \varphi^{s}_{d}(y - (x+y)^d) - 2(x+y)^d \\
& = y - \varphi^{s}_{d}(x) + H(x,y),
\end{aligned}
\end{equation}
where
\begin{equation} \label{def:H}
H(x,y) = \varphi^{s}_{d}(x) - \varphi^{s}_{d}(y - (x+y)^d) - 2(x+y)^d .
\end{equation}
Taking into account that $\varphi^{s}_{d}$ satisfies the invariance equation
\begin{equation} \label{inveq}
\varphi^{s}_{d}(x) - 2(x+\varphi^{s}_{d}(x))^d  = \varphi^{s}_{d}(\varphi^{s}_{d}(x) - (x+\varphi^{s}_{d}(x))^d), 	
\end{equation}
$H$ can be rewritten as 
\begin{equation} \label{formulaperhtilde}
	\begin{aligned}
		H(x,y) & = \varphi^{s}_{d}(\varphi^{s}_{d}(x) - (x+\varphi^{s}_{d}(x))^d) - \varphi^{s}_{d}(y - (x+y)^d)  + 
		2(x+\varphi^{s}_{d}(x))^d   - 2(x+y)^d \\
		& = \int_0^1\Big[ \frac{d}{dt}\varphi^{s}_{d}(\xi_t-(x+\xi_t)^d)(1-d(x+\xi_t)^{d-1})
		+2d(x+\xi_t)^{d-1} \Big](\varphi_d^s(x) -y) \,dt \\
		& =: (\varphi^{s}_{d}(x) -y) \wh H(x,y),
	\end{aligned}
\end{equation}
where 
$\xi_t= y +t (\varphi^{s}_{d}(x) -y) $ and hence since $(x,y)\in \Omega_0^+$ we have 
$0 < \varphi_d^s(x) \le \xi_t \le y \le x\le \rho_2$.

From \eqref{fitadiffyph} and \eqref{formulaperhtilde} we have 
$$
y_1-\varphi^{s}_{d}(x_1)=\left(y-\varphi^{s}_{d}(x)\right)\left(1-\wh H(x,y)\right),
$$ 
so that it is enough to see that $\wh H(x,y)<1$.

Given $x\in (0, \rho_2)$, we introduce  $\Psi_2(\xi)=\xi-(x+\xi)^d $ 
for $\xi \in (0 , x)$. The function $\Psi_2(x)$  is concave and we have that $\Psi_2(0)=-x^d <0$, with $-x^d >-\rho_2^d  $ and 
$\Psi_2(x)= x-(2x)^d = x(1-2^d x^{d-1}) > x(1-1/(2d) ) >0$ by one of the conditions in the definition of $\rho_2$.
Hence, $\xi_t-(x+\xi_t)^d  \ge -\rho_2^d$ for all $t\in [0,1]$.
Note that, by the fact that the coefficients of the expansion of
$\varphi^{s}_{d}$ are non-negative (Lemma \ref{lem:stable_positive}), at the symmetric point the absolute value of the derivative is smaller, i.e. for $x\in[0,r_0) $, 
$|(\varphi^{s}_{d})'(-x ) | \le (\varphi^{s}_{d})'(x )$  so that, for $-x^d < \zeta <0$, $|(\varphi^{s}_{d})'(\zeta ) | \le (\varphi^{s}_{d})'(-\zeta ) \le (\varphi^{s}_{d})'(x^d )
\le (\varphi^{s}_{d})'(\rho_2 ) \le 1/2$.
Then 
\begin{equation}
\label{condicio-H-1}
	|(\varphi^{s}_{d})'(\xi_t-(x+\xi_t)^d)| <1/2.
\end{equation}
Moreover,   
\begin{equation}
	\label{condicio-H-2}
2d (x+\xi_t)^{d-1} < 2d(2x)^{d-1} \le 1/2. 
\end{equation}
By \eqref{condicio-H-1} and \eqref{condicio-H-2} we obtain 
$\wh H(x,y) < 1$
and therefore
we obtain that the iterates stay in the same side of $\graph \varphi^{s}_{d}$.

Now we deal with $\Omega^-_0$. To prove that 
	$\Omega_{0}^- \cap\A_d(0) =\emptyset$ we will see that if $(x_0,y_0) 
\in \Omega_{0}^- $ then not all its iterates can remain in  
$\Omega_{0}^- $. Assume the contrary. To simplify the estimates we do a ($x$-depending) translation to put the (local) stable manifold at $\{y=0\}$. Actually, we make the  change
$C(x,y) = (x,y+\ph^s_d(x))$. The transformed map is 
$$
\wh T_{d}\left(
\begin{array}{l}
	x \\ 
	y
\end{array}
\right)
= 
\left(
\begin{array}{l}
F(x,y) \\ 
G(x,y)
\end{array}
\right) :=
\left(
\begin{array}{l}
	y +\ph^s_d (x) - (x+y+\ph^s_d (x))^d \\ 
y +\ph^s_d (x) - 2(x+y+\ph^s_d (x))^d - 
\ph^s_d\big( 	F(x,y) \big)
\end{array}
\right).
$$
  
The domain $\Omega^-_0$ is transformed into 
$$
\wh \Omega^-_0 = \{ (x,y)\mid \   0<x<\rho_2, \,  -\ph^s_d (x) < y < 0 \}.
$$
Let $(x_0,y_0) \in \wh \Omega^-_0 $. 
We use again the notation
$(x_k,y_k) = \wh T_d^k(x_0,y_0)$ for $k\ge 0$. 

Let  $\rho_3 \in (0,\rho_2] $ be such that 
$$0< \ph^s_d(x) < 3x^d \qquad \text{ for } \quad x\in (0,\rho_3).
$$

Assume that $(x_k,y_k)\in \wh \Omega^-_0 $ for all $k\ge 0$. 
Since $d$ is even, we also have $0< x_{k+1} \le 	y_k +\ph^s_d (x_k)
< \ph^s_d (x_k) \le x_k$. Then $\{x_k\}$ is also decreasing and 
$$
x_{k} =y_{k-1}+\varphi_d^s(x_{k-1})-\left(x_{k-1}+y_{k-1}+\varphi_d^s(x_{k-1})\right)^d \le \ph^s_d (x_{k-1}) -x^d_{k-1} < 2 x^d_{k-1},
$$
and inductively we get 
\begin{equation} \label{fitaxk}  
x_k < 2^{\left(\frac{d^{k}-1}{d-1}\right)} x_0^{d^k} < (2^{1/(d-1)} x_0)^{d^k}.
\end{equation}
Note that since $2^{1/(d-1)} x_0 < 2^{1/(d-1)} \rho_2 <1/2$ then $x_k\to 0$. We have  
$$
G(x,0) = \ph^s_d(x) - 2(x+\ph^s_d(x))^d  - \ph^s_d(\ph^s_d(x) - (x+\ph^s_d(x))^d)=0
$$
by the invariance  equation \eqref{inveq}, 
and
$$
G_1(x) :=
\frac{\partial G}{\partial y}(x,0) = 1 - 2d(x+\ph^s_d(x))^{d-1}-
(\ph^s_d)'(F(x,0)) (1- d(x+\ph^s_d(x))^{d-1} )= 1 - 2d x^{d-1} +\dots
$$
so that 
$$
G(x,y)= G_1(x)y +G_2(x,y) \qquad \text {with} \quad G_2(x,y) =  \mathcal O(y^2).
$$
There exists $\rho_4 \in (0,\rho_3] $ such that 
$$
G_1(x) > 1-\nu x^{d-1} \qquad \text{and} \qquad |G_2(x,y)| <M |y|^2, \quad x\in (0,\rho_4), \quad (x,y)\in \wh \Omega^-_0 , 
$$
for some $\nu > 2d$ and $M>0$.
Then, taking an iterate $(x_k, y_k)$ such that  $x_k <\rho_4$ and relabeling it by  $(x_0, y_0)$ and, starting again the iteration, we have
\begin{align}
y_{k+1} = G(x_k, y_k) \le  & (1-\nu x_k^{d-1}) y_k + My_k^2 \nonumber \\
 < & (1-\nu x_k^{d-1}-M\ph^s_d(x_k))y_k \le  (1-b x_k^{d-1}) y_k	,
\label{iteracioyhat}
\end{align} 
where $b= \nu + 3\rho_4 M $. 
Iterating \eqref{iteracioyhat} we obtain
\begin{align*}
y_k & < \prod_{j=0}^{k-1} (1-bx_j^{d-1}) y_0 
= y_0 \exp \sum_{j=0}^{k-1} \log (1-bx_j^{d-1}).
\end{align*}
The series 
$\sum \log (1-bx_j^{d-1}) $ is convergent since $bx_j^{d-1}$ tends to zero and $\log (1+x) > (2\log 2 ) x$ if $x\in (-1/2, 0)$. Then, 
$y_k <y_0 \exp ( S_0 )$ where $S_0= \sum_{j=0}^{\infty} \log (1-bx_j^{d-1}) $.
This means that $y_k$ is less than some negative number so that  $y_k$ cannot converge to 0 and therefore $(x_0, y_0) \notin \A_d(0)$.

If $(x_0, y_0)\in   \Omega_0^-$, assume that all its iterates stay in $\TT_{R_d}$. Then the  sequences $\{x_k\}$ and $\{y_k\}$ are decreasing and there exists $m\ge 0$ such that $x_m<\rho_4$ and, by the previous estimates,  $(x_m, y_m)\notin \A_d(0)$.  
\end{proof}

\begin{proof}[Proof of Theorem A(a)]
Let $\Omega_r$ be the closure of the bounded domain whose boundary is the simple closed curve formed by the concatenation of $\Gamma$ and 
$J_r:=\{(x,y)\mid\  x=y, \, 0< x < p \}$ (the meaning of $r$ is {\it right}, in contrast of the later notation $J_\ell$ for {\it left}).  See Figure \ref{fig:basin_d_even}.  The domain $ \Omega_r$ is invariant by $T_d$ since the iterates cannot jump across the boundary. Moreover, there exists $m\ge 1$ such that 
$T^m_d(\Omega_r) \subset \Omega ^+_0$. 
Then $\Omega_r \subset \A_d(0)$.  By Lemma  \ref{iteratsenT}, to obtain $\A_d(0)$ we only need to take one preimage of $\Omega_r$ by $T_d$.

In the light of Lemma \ref{lem:lines}(a) we write  
$$
\Gamma_{\pm} := T^{-1}_{\pm,d}\left(\Gamma\right), \qquad 
\Omega_{\pm} := T^{-1}_{\pm,d}\left(\Omega_r \right).
$$
Clearly, the sets  $\Omega_{\pm} $ are contained in $\A_d(0)$. The boundaries of   $\Omega_{\pm} $ are the images of the boundaries of $\Omega_r$ by $T_{\pm,d}^{-1}$. Consequently, we have
\begin{equation*}
\partial \Omega_+=\Gamma_+ \cup J_{\ell} \qquad  \text{and} \qquad
\partial \Omega_-=\Gamma_- \cup J_{\ell},
\end{equation*}
where $\Gamma_+:=T_{+,d}^{-1}(\Gamma)$ is a curve contained in 
 $\{y\ge -x\}$ which joints $(0,0)$ with $(-p,p)$, $\Gamma_-:=T_{-,d}^{-1}(\Gamma)$ is a curve contained in $\{y\le -x\}$ which joints $(0,0)$ with $(-p,p)$, and 
$J_\ell:=T_{+,d}^{-1}(J_r) = \{(x,y)\mid\  y=-x, \, -p< x < 0 \}$.   Notice that every point in $\Gamma \setminus \{(0,0)\cup (p,p)\}$ has two preimages while 
$$
T^{-1}_{+,d}\left(0,0\right)=T^{-1}_{-,d}\left(0,0\right)=(0,0) \qquad \mbox{and} \qquad T^{-1}_{+,d}\left(p,p\right)=T^{-1}_{-,d}\left(p,p\right)=\left(-p,p\right).
$$
See Figure \ref{fig:basin_d_even}. Accordingly, the curves $\Gamma_{\pm}$ joint the points $\left(0,0\right)$ and $\left(-p,p\right)$, they are mapped bijectively onto $\Gamma$ by $T_d$ and determine the boundary of the basin of attraction of the origin. That is,
$$
\mathcal A_d(0) = \Omega_+\cup \Omega_- \cup \Gamma_+ \cup \Gamma_-.
$$
In Figure  \ref{fig:basin_d_even} we draw $\Gamma_-$ in green and $\Gamma_+$ in blue. This finishes the proof of Theorem A(a). 
\end{proof}

We can add some extra information about the geometry of $W_d^s(0)$. See Figure \ref{fig:basin_d_even}.
On the one hand, direct computations from \eqref{eq:inverse_d_even} imply that if $u=\left(u_1,u_2\right)$ is the tangent vector of $W_d^s(0)$ at the point $(p,p)$ then 
$$
DT^{-1}_{\pm,d}(p,p)(u)
= 
\left(
\begin{array}{c}
	\infty \\ 2u_1-u_2
\end{array}
\right)
\approx 
\left(
\begin{array}{l}
	1\\0	
\end{array}
\right)
\quad \mbox{since} \quad
DT^{-1}_{\pm,d}(p,p) 
= 
\left(
\begin{array}{ll}
	\infty& \infty \\ 
	2 & -1 
\end{array}
\right), 
$$ 
where here $\infty$ has to be understood as a limit.
Concretely, the tangent vector of $W_d^s(0)$ at the point $(-p,p)$ is horizontal. 

Moreover, taking into account the symmetry 
\eqref{simetria}, 
the points with highest value of $y$ in $W_d^s(0)$ should be symmetric. Actually, they coincide with the two points $q^{\pm}=\left(q_x^{\pm},q_y^{\pm}\right)$ which are mapped by $T_d$ to a point $q=\left(q_x,q_y\right)\in \Gamma$ whose tangent vector has slope $m=2$.

\section{Proof of Theorem A(${\rm b}$): The case $d$ odd and $a=1$}\label{sec:d_odd_a1}

For the whole section we assume that $d\geq 3$ is odd  and $a=1$. The proof of Theorem A(b) is quite long and therefore we will split it into several lemmas and propositions. Roughly speaking the strategy is as follows. First we will see that $\A_d(0)$ is open, simply connected and that $[-1/2,0]\times \{0\}\subset \A_d(0)$ (Proposition \ref{prop:d_odd_a_1_boundary}). Second we will show that there exists a hyperbolic two-cycle of saddle type whose unstable manifold intersects $[-1/2,0]\times \{0\}$. From this we will show that the two-cycle as well as its stable manifold belong to $\partial \A_d(0)$ (Proposition \ref{prop:inclusio}). And finally we will see that  $\partial \A_d(0)$ is unbounded (Proposition \ref{prop:d_odd_a_1_unbounded}).

Let $b\in(0,1/2]$. We denote by $Q_b\subset \mathbb R^2$ be the compact convex polygon bounded by the straight segments 
\vglue 0.2truecm
\begin{tabular}{ll}
$A_b:=\{(2b^d,y)\in \mathbb R^2\mid\ y\in[0,2b^d] \}$,   & \qquad $B_b:=\{(x,2b^d)\in \mathbb R^2 \mid \ x\in[0,2b^d]\}$, \\
& \\
$C_b:=\{(x,2x+2b^d)\in \mathbb R^2 \mid \ x\in[-b^d,0]\}$, & \qquad $D_b:=\{(-b^d,y)\in \mathbb R^2\mid\ y \in [-b^d,0] \}$, \\
& \\ 
$E_b:=\{(x,-b^d)\in \mathbb R^2\mid \ x \in [-b^d,0]$\}, & \qquad $F_b:=\{(x,\frac{1}{2}x-b^d)\in \mathbb R^2\mid \ x\in[0,2b^d]\}$.
\end{tabular}
\vglue 0.2truecm
\noindent We denote $Q^{\star}:=Q_{1/2}$. 

\begin{proposition}\label{prop:d_odd_a_1_boundary}  We have that  $Q^\star \subset \A_d(0)$. In particular, $[-1/2,0]\times \{0\}\subset \A_d(0)$. Moreover, $\A_d(0)$ is open and simply connected.
\end{proposition}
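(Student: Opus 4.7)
The strategy has three steps: (1) establish $T_d(Q^\star) \subset Q^\star$, (2) deduce that every orbit in $Q^\star$ converges to the origin (so $Q^\star \subset \A_d(0)$), and (3) derive from these the stated topological properties of $\A_d(0)$.

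\textbf{Step 1 (Forward invariance).} I would verify $T_d(Q^\star) \subset Q^\star$ by parametrizing each of the six sides $A_{1/2},\ldots,F_{1/2}$ and computing its image explicitly. Lemma \ref{lem:lines}(b) is especially useful here, since it describes how $T_d$ sends the antidiagonal lines $y=-x+x_0$ bijectively onto $y=x-x_0^d$, which immediately identifies the images of the slanted sides $C_{1/2}$ and $F_{1/2}$. On $Q^\star$ the relevant bounds are $|x+y|\leq 2^{2-d}$ and $|x|,|y|\leq 2^{1-d}$, which make the six inequalities defining $Q^\star$ verifiable side by side. I expect the inclusion to be strict, with $T_d(Q^\star)\cap \partial Q^\star = \{(0,0)\}$.

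\textbf{Step 2 (Convergence and the segment).} Forward invariance makes $K_n := T_d^n(Q^\star)$ a decreasing nested sequence of compact sets, so $K_\infty := \bigcap_n K_n$ is a nonempty compact $T_d$-invariant subset of $Q^\star$. By Lemma \ref{lem:properties_invariant}, the leading term of the induced center-manifold dynamics is $R^c_d(x) = x - c\, x^d + \mathcal{O}(x^{2d-1})$ with $c>0$; since $d$ is odd (so $d-1$ is even and $x^{d-1}>0$ for $x\neq 0$), $R^c_d$ attracts from both sides of the origin, and combined with the super-contracting stable eigenvalue $0$ this yields an open neighborhood $U$ of $(0,0)$ on which every orbit converges to the origin. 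The strict contraction from Step 1 implies $K_n \subset U$ for all large $n$, hence $K_\infty = \{(0,0)\}$ and $T_d^n(p) \to (0,0)$ for every $p \in Q^\star$, giving $Q^\star \subset \A_d(0)$. Finally, a direct computation gives $T_d(x,0) = (-x^d,-2x^d) = (t, 2t)$ with $t=-x^d\in[0,2^{-d}]$ when $x\in[-1/2,0]$, and such a point lies in $Q^\star$. Hence $[-1/2,0]\times\{0\} \subset T_d^{-1}(Q^\star) \subset \A_d(0)$.

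\textbf{Step 3 (Topology).} Openness follows since $\A_d(0)=\bigcup_{n\ge 0} T_d^{-n}(U)$ is a union of open sets by continuity of $T_d$. For simple connectedness, by Lemma \ref{lem:lines}(b) the map $T_d$ is a global homeomorphism of $\mathbb{R}^2$, so each $T_d^{-n}(Q^\star)$ is homeomorphic to the convex (and hence simply connected) polygon $Q^\star$. Forward invariance gives the increasing chain $Q^\star \subset T_d^{-1}(Q^\star) \subset T_d^{-2}(Q^\star) \subset \cdots$, and its union is all of $\A_d(0)$ because the origin lies in the interior of $Q^\star$, so any orbit converging to the origin eventually enters $Q^\star$. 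An increasing union of simply connected planar open sets is simply connected, finishing the proof.

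\textbf{Main obstacle.} The technically heaviest point is the case-by-case verification in Step 1. The delicate case should be $d=3$, where $|(x+y)^d|$ is only comparable in size to the polygon and a naive smallness argument fails; there one will have to exploit the specific choice of slopes $2$ and $1/2$ for the sides $C_{1/2}$ and $F_{1/2}$ (which are reminiscent of the two eigendirections at the origin) to close the estimates. Once this is done, Steps 2 and 3 are essentially formal.
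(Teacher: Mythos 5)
Your Steps 1 and 3 are in the spirit of the paper, but Step 2 contains the genuine gap, and it sits exactly where the paper's key idea lives. Forward invariance of the \emph{single} set $Q^\star$ (even the strict version $T_d(Q^\star)\subset \Int(Q^\star)$) only tells you that $K_\infty=\bigcap_n T_d^n(Q^\star)$ is a nonempty compact fully invariant subset of $Q^\star$; it does not force $K_\infty=\{(0,0)\}$, nor does it force $K_n\subset U$ for large $n$. Nothing in your argument excludes, say, an attracting two-cycle or some other invariant set sitting inside $Q^\star$ away from the origin, and the sentence ``the strict contraction from Step 1 implies $K_n\subset U$ for all large $n$'' is precisely the assertion that needs proof — as written it is circular. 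The paper closes this hole by proving the invariance claim for the \emph{entire one-parameter family} $\{Q_b\}_{b\in(0,1/2]}$, which is a nested neighbourhood basis of the origin shrinking to $\{(0,0)\}$: the statement $T_d(Q_b)\subset\Int(Q_b)$ for every $b$ turns the gauge of the family into a strict Lyapunov function on $Q^\star\setminus\{(0,0)\}$, and only then does every orbit in $Q^\star$ get funneled into the local basin. Note that this is not a free upgrade of your Step 1: the estimates for general $b$ involve quantities like $b^{d^2-d}$ and are carried out side by side for all six edges of $Q_b$, since $T_d$ is not homogeneous.

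Two smaller points. First, Lemma \ref{lem:lines}(b) describes the images of the \emph{antidiagonal} lines $y=-x+x_0$, whereas the slanted sides $C$ and $F$ of $Q_b$ have slopes $2$ and $1/2$; the lemma does not identify their images, and you will have to parametrize and estimate them directly as the paper does. Second, $(0,0)$ is an interior point of $Q^\star$, so your expected conclusion $T_d(Q^\star)\cap\partial Q^\star=\{(0,0)\}$ is not the right formulation; what one proves (and needs) is $T_d(Q_b)\cap\partial Q_b=\emptyset$. Your Step 3 (openness via $\bigcup_k T_d^{-k}$ of a neighbourhood, simple connectedness via an increasing union of simply connected preimages under the global homeomorphism $T_d$) matches the paper's argument and is fine once Step 2 is repaired.
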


\begin{proof}
From Lemma \ref{lem:properties_invariant} the origin is asymptotically stable and therefore $\A_d(0)$ is open  (see also Figure \ref{fig:local_behavior}). 
The family $\{Q_b\}_{b\in (0,1/2]}$ is a neighbourhood basis of the origin. 

We claim that $T_d(Q_b)\subset {\Int}(Q_b),\ b\in (0,1/2]$. See Figure \ref{fig:TQ} for a sketch of $Q_b$ and its image.

Assume the claim is true. This implies that $Q^{\star}\subset \A (0)$. Since the image of $[-1/2,0]\times \{0\}$ by $T_d$ is the segment $\{(x,2x)\in \mathbb R^2\mid \ x\in[0,(1/2)^d]\} \subset Q^{\star}$ we conclude that $[-1/2,0]\times \{0\}\subset \A_d(0)$ as desired. Moreover, since there exists an open simply connected neighborhood $Q^\star $ containing $(0,0)$ and contained in $\A_d(0)$, the origin is asymptotically stable (our proof demonstrates again that the origin is asymptotically stable). 
Finally,  
$\A_d(0) = \bigcup _{k\ge 0} T_d^{-k}(Q^\star )$.   
Since $T_d$ is one to one, $T_d^{-k}(Q^\star )$ is also open and simply connected, for all $k$.    
Furthermore, since $T_d^{-k-1}(Q^\star ) \supset T_d^{-k}(Q^\star )$,  
we conclude that $\A_d(0)$  is open and simply connected  as well.

The rest of the proof is devoted to prove the claim. Hereafter we remove from the notation the dependence of the whole construction with respect to the parameter $b$, unless strictly necessary. The proof consists in studying the image of  each side  of the boundary of $Q$ by $T_d$. We will get that the image of the boundary of $Q$ is contained in $\Int   (Q)$ and therefore 
$T_d(Q)  \subset \Int (Q)$.


We denote by $\Gamma_A$ the image of the segment $A$ under $T_d$ and similarly for the other pieces of the boundary. 
Next, we prove that each image is contained in $\Int (Q)$.
See Figure \ref{fig:TQ}.

\noindent {\bf The image  $\Gamma_A=T_d(A)$.} We parametrize $\Gamma_A$ as follows
$$
\Gamma_A= T_d(A)= \left \{ \left(\Psi_1(y):= y-(2b^d+y)^d,\
\Psi_2(y):=y-2(2b^d+y)^d\right) , \, y \in [0, 2b^d] \right \}.
$$
  
We check that $\Gamma_A \subset \Int Q \cap \{y < x\}$.
The condition $y < x$ is equivalent to 
$\Psi_2(y) < \Psi_1(y)$  for  $y \in [0, 2b^d]$ which is clearly true.
 The condition 
$y> \frac12 x -b^d$ is equivalent to 
$\Psi_2(y)> \frac12 \Psi_1(y) -b^d$ for $y \in [0, 2b^d]$ which can be
written as 
$\chi_1 (y):= \frac12 y +b^d -\frac32(2b^d +y)^d >0$ for  $y \in [0, 2b^d]$.  
We have that $\chi_1(0)= b^d (1-\frac32 2^d b^{d^2-d})  >0 $ and
$\chi_1(2b^d ) = 2 b^d (1-\frac34 4^d b^{d^2-d} )>0$ since $ b\in (0, 1/2]$ and $d\ge 3$.
Also $\chi_1'' (y)=  -d(d-1)\frac32(2b^d +y)^{d-2} <0$, therefore   
$\chi_1 (y)>0$.
Finally, $\Psi_1(y) < y \le 2b^d$ and 
$\Psi_2(y) \ge -b^d$. The first claim is immediate. For the second we
consider the auxiliary function  
$ 
\chi_2(y):= y +b^d-2(2b^d+y)^d 
$.
We have $\chi_2(0) = b^d(1-2^{d+1} b^{d^2-d} ) >0$ and 
$\chi_2(2b^d) = 3b^d(1-\frac13 2^{2d+1} b^{d^2-d} ) >0$ since $d\ge 3$. Moreover,
$\chi_2'' (y) =  -2d(d-1)(2b^d +y)^{d-2} <0$ and hence 
$\chi_2 (y) >0$.

\begin{figure}[ht]
    \centering
     \includegraphics[width=0.6\textwidth]{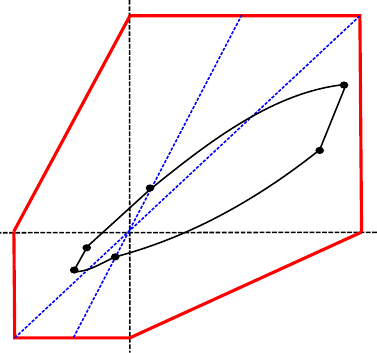}
 \put(-9,90){\scriptsize $\hat a=(2b^d,0)$}
  \put(-17,82){ $\bullet$}
       \put(-147,40){$Q_b$}
     \put(-77,40){$F$}
          \put(-77,107){$\Gamma_A $}
   \put(-9,140){$A$}
             \put(-29,160){$\Gamma_B$}
      \put(-100,240){$B$}
                   \put(-129,155){$\Gamma_C$}
        \put(-230,175){ $C$}
                  \put(-197,99){$\Gamma_D$}
          \put(-267,60){$D$}
            \put(-231,63){$\Gamma_E$}
            \put(-215,0){$E$}
              \put(-205,50){$\Gamma_F $}
 \put(-8,240){\scriptsize $\hat b=(2b^d,2b^d)$}
   \put(-17,235){ $\bullet$}
 \put(-170,240){\scriptsize $\hat c=(0,2b^d)$}
    \put(-180,235){ $\bullet$}
 \put(-310,90){\scriptsize $\hat d=(-b^d,0)$}
   \put(-263,82){ $\bullet$}
  \put(-320,10){\scriptsize $\hat e=(-b^d,-b^d)$}
   \put(-261,9){ $\bullet$}
  \put(-165,3){\scriptsize $\hat f=(0,-b^d)$}
     \put(-179,6){ $\bullet$}
  \put(-57,220){\scriptsize $y=x$}
  \put(-143,200){\scriptsize $y=2x$}
  \put(-185,55){\scriptsize $T_d(\hat a)$}
  \put(-35,140){\scriptsize $T_d(\hat b)$}
  \put(-36,196){\scriptsize $T_d(\hat c)$}
  \put(-180,118){\scriptsize $T_d(\hat d)$}
  \put(-226,75){\scriptsize $T_d(\hat e)$}
  \put(-236,52){\scriptsize $T_d(\hat f)$}
 \caption{\small{Sketch of the closed set $Q_b$ and its image $T_d(Q_b)$ for $d\geq 5$.}}       
    \label{fig:TQ}
    \end{figure}
    
\vglue 0.2truecm


\noindent {\bf The image  $\Gamma_B=T_d(B)$.} We parametrize $\Gamma_B$ by $x$: 
$$
\Gamma_B= T_d(B)= \left \{ \left(\Psi_1(x):= 2b^d-(2b^d+x)^d,\ \Psi_2(x):=
2b^d-2(2b^d+x)^d\right) , \, x \in [0, 2b^d] \right \}.
$$
  
It is immediate to see that 
$$
\Psi_1'(x)=- d(2b^d+x)^{d-1}<0 \qquad \text {and } \qquad \Psi_2'(x)=- 2d(2b^d+x)^{d-1}<0.
$$
Therefore, 
$$
\Gamma_B \subset [\Psi_1(2b^d), \Psi_1(0)] \times [\Psi_2(2b^d),\Psi_2(0)] 
\subset (0, 2b^d) \times [0, 2b^d) \subset \Int(Q).
$$

\vglue 0.2truecm


\noindent The image  $\Gamma_C=T_d(C)$. We parametrize $\Gamma_C$ by $x$: 
\[
\Gamma_C = \left \{ \left(\Psi_1(x):=2x+2b^d-(3x+2b^d)^d,\ \Psi_2(x):=2x+2b^d-2(3x+2b^d)^d 
\right) , \, x \in [-b^d,0] \right \}.
\]
  
Similarly as before 
\[
	\Psi_1'(x)  =2-3 d(3x+2b^d)^{d-1}>2-3d2^{d-1}b^{d^2-d}>0 \\
\]
and
\[
	\Psi_2'(x)  =2-6 d(3x+2b^d)^{d-1}>2-6 d2^{d-1}b^{d^2-d}>0.
\]
Then,
$$
\Gamma_C \subset [\Psi_1(-b^d), \Psi_1(0)] \times [\Psi_2(-b^d),\Psi_2(0)] 
\subset (0, 2b^d) \times (0, 2b^d) \subset \Int(Q).
$$
\vglue 0.2truecm

\noindent {\bf The image  $\Gamma_D=T_d(D)$.} We parametrize $\Gamma_D$ by $y$:  Clearly,
$$
\Gamma_D= \left \{ \left( \Psi_1(y):=y-(-b^d+y)^d,\ \Psi_2(y):=y-2(-b^d+y)^d \right) , \,  y\in[-b^d,0] \right \}.
$$
  
First, we check that $\Gamma_D \subset [-b^d, 2b^d]\times [-b^d, 2b^d]$. 
Indeed, 
\[
\Psi'_1(y)=1-d(-b^d+y)^{d-1}>0,\qquad 
\text{and}\qquad 
\Psi'_2(y)=1-2d (-b^d+y)^{d-1} >0 .
\]
Then 
\[
\begin{aligned}
-b^d < \Psi_1(-b^d) \le \Psi_1(y) \le \Psi_1(0) < 2b^d, \\
-b^d < \Psi_2(-b^d) \le \Psi_2(y) \le \Psi_2(0) < 2b^d.
\end{aligned}
\]
The condition $\psi_2(y)< 2\psi_1(y)+2b^d$ reads 
$y-2(-b^d+y)^d < 2(y-(-b^d+y)^d)+2b^d $
which is satisfied if $y +2b^d >0$ which is the case.

The condition $\psi_2(y) > \frac12 \psi_1(y) -b^d $ reads
$y-2(-b^d+y)^d > \frac12 (y-(-b^d+y)^d) -b^d $
which is satisfied if
$ \chi_3(y) := \frac12 y-\frac32(-b^d+y)^d +b^d > 0 $. 
This is indeed true because
$ \chi_3(-b^d)=\frac12 b^d + \frac32(2b^d)^d >0 $ and
$ \chi_3'(y) = \frac12-\frac32 d(-b^d+y)^{d-1} > 0 $
(since $d\ge 3$ and $b\in (0,1/2]$).

\vglue 0.1truecm

\noindent {\bf The image  $\Gamma_E=T_d(E)$.} We parametrize $\Gamma_E$ by $x$: 
$$
\Gamma_E= \left \{ \left( \Psi_1(x):=-b^d-(-b^d+x)^d,\  \Psi_2(x):=-b^d-2(-b^d+x)^d \right) , \,  x\in[-b^d,0] \right \}.
$$
  
In this case we will check that $\Gamma_E\subset (-b^d ,0)\times (-b^d ,0) \subset \Int(Q)$. 
Indeed, directly from the expression of the parametrization we have,  $\Psi_1(x) >-b^d$, $\Psi_2(x)>-b^d$,
$\Psi_1(x)< -b^d-(-2b^d)^d = -b^d(1-2^d b^{d^2-d}) <0$ and    
$\Psi_2(x)<  -b^d(1-2^{d+1} b^{d^2-d}) <0$.

\vglue 0.1truecm

\noindent {\bf The image  $\Gamma_F=T_d(F)$.}  We parametrize $\Gamma_F$ by $x$: 
$$
\Gamma_F= \left \{ \left( \Psi_1(x):=\frac{1}{2}x-b^d-\left(\frac{3}{2}x-b^d\right)^d,
\ \Psi_2(x):=
\frac{1}{2}x-b^d-2\left(\frac{3}{2}x-b^d\right)^d\right) , \,  x\in[0,2b^d] \right \}.
$$

In this case we will also check that 
$\Gamma_F\subset (-b^d ,0)\times (-b^d ,0) \subset \Int(Q)$. 
We have that 
$$
\Psi_1 (0) = -b^d -(-b^d )^d > -b^d, \qquad 
\Psi_1 (2b^d ) = -(2b^d )^d<0 .
$$
Moreover, since
$\Psi'_1 (x) = \frac{1}{2}-\frac32 d\left(\frac{3}{2}x-b^d\right)^{d-1}$
which is positive because
$
\frac32 d\left(\frac{3}{2}x-b^d\right)^{d-1} \le 
\frac32 d 2^{d-1} b^{d(d-1)} \le \frac32 d 2^{-d^2+2d-1)}  \le 9/32<1/2
$
we get $\Psi_1 (x)\in (-b^d ,0)$.

Concerning $\Psi_2$, 
$\Psi_2(0) =-b^d+2\left(b^d\right)^d >-b^d $ and  
$\Psi_2(2b^d)= -2 (2b^d)^d<0$. However,   
it is not (always) monotone. Depending on $b$ and $d$ it may have a maximum at some $x_c\in (0,b^d)$. The value of $x_c$  is obtained from the condition  $\Psi'_2(x_c)=0$. It is the positive solution of
$$
\left(\frac{3}{2}x_c-b^d\right)^{d-1} = \frac{1}{6d}.
$$ 
In case $x_c$ belongs to the interval $(0, 2b^d)$ 
we have that 
$
\Psi_2(x_c) =\frac12 x_c-b^d-\left(\frac{3}{2}x_c-b^d\right)^d
= \frac12 x_c-b^d-\left(\frac{1}{6d} \right)^{d/(d-1)}< -\left(\frac{1}{6d}\right)^{d/(d-1)} <0
$. 
Thus, $\Psi_2(x) <0$ for $x\in [0,2b^d]$. 
This finishes the proof. 
\end{proof}

Following the steps of the strategy of the proof of Theorem A(b) described at the beginning of the section.  We start  checking that $\{p_0=(0,1),p_1=(0,-1)\}$ forms a hyperbolic two-cycle. 

Since $DT_d(p_0) =DT_d(p_1)$, the chain rule implies that
\begin{equation} \label{eq:dif_matrix_two_cycle}
DT_d^2(p_0)=DT_d^2(p_1)= DT_d(p_0) DT_d(p_1)
=
\left (
\begin{array}{ll}
 3d^2-2d & \ \ 3d^2-4d+1 \\
 6d^2-2d & \ \ 6d^2-6d+1 
\end{array}
\right ).
\end{equation}
  
A direct computation shows that the eigenvalues and eigenvectors of $DT_d^2(p_j),\ j=0,1$, are given by 

\begin{equation*}
\lambda^{\pm}_d =  \frac{1}{2}\left ( 9d^2- 8d  + 1 \pm (3d-1)\sqrt{ 9 d^2 -10d + 1} \right) 
\end{equation*}
  
and
\begin{equation}\label{eq:m}
 ( 1,m^{\pm}_d) =   \left(1,\ \frac{ 4d}{1-d \pm \sqrt{ 9d^2-10d +  1}} \right),
\end{equation}
respectively. Finally, it is straightforward to check that 
both eigenvalues are strictly positive. Moreover,  
$\lambda^{-}_d$ is strictly decreasing and  $\lambda^+_d$ is strictly increasing, with respect to the parameter $d$. We also have
\begin{center}
\begin{tabular}{ccl}
$ \lim\limits _{d\to \infty}  \lambda^{-}_d=1/9$ & and & $1/9<\lambda^{-}_d \le   
\lambda^{-}_3 =  29-8\sqrt{13} \approx 0.1556$ \\
$ \lim\limits_{d\to \infty}  \lambda^{+}_d=\infty$ & and & 
 $\lambda^{+}_ d \ge \lambda^{+}_ 3 = 29+8\sqrt{13}\approx  57.8444.$
\end{tabular}
\end{center}

On the other hand, $m^{-}_d$  is negative and  strictly  increasing while  $m^+_d$ is positive and  strictly decreasing (both with  respect to the parameter $d$). We also have

\begin{center}
\begin{tabular}{ccl}
$\lim\limits_{d\to \infty} m^{-}_d =-1$ & and & 
 $-1.3028 \approx \frac{-6}{1+\sqrt{13}} = m^{-}_3  \le m^{-}_d <-1$ \\
 $ \lim\limits _{d\to \infty} m^{+}_d =2$ & and & $2<m^{+}_d \le m^{+}_3 = \frac{6}{\sqrt{13}-1} \approx 2.3028$ 
 \end{tabular}
\end{center}

Therefore,  the two cycle  $\{p_0, p_1\}$ is a  hyperbolic saddle point. In what follows we will denote by $W^s:=W^s_{\{p_0,p_1\}}$ and $W^u:=W^u_{\{p_0,p_1\}}$ the (global) stable and unstable manifolds of the periodic orbit $\{p_0,p_1\}$, respectively. We  split $W^u=W^u_{p_0}\cup W^u_{p_1}$ where $W^u_{p_j}$ is the (global) unstable manifold of the fixed point $p_j$ for the map $T_d^2$, $j=1,2$. Similarly, $W^s=W^s_{p_0}\cup W^s_{p_1}$ for the stable manifold. Consequently, we remark that $W^s$ and $W^u$ refer to the manifolds associated to the hyperbolic periodic orbit $\{p_0,p_1\}$, and hence they are not the manifolds associated to the origin (with a similar notation) studied and considered in Sections  \ref{sec:local} and 
\ref{sec:d_even}.

To simplify the notation, unless strictly necessary, we drop the dependence of $\lambda_d^{\pm}$ and $m_d^{\pm}$ with respect to the parameter $d$. Thus, we will write
$$
\lambda^\pm:=  
\lambda^{\pm}_d \quad \quad \mbox{and} \quad \quad m^{\pm}:=m^{\pm}_d .
$$
We  introduce $ m^{\star}=\frac72$.

Next lemma gives a precise description of the geometry of $W^u_{p_0}$ that we will use to prove that $\{p_0,p_1\} \subset \partial \mathcal A_d(0)$ and finally to prove that  $W^s \subset \partial \mathcal A_d(0)$.

\begin{lemma}\label{lemma:d_odd_a_1_unstable} 
Let $\D$ be the closed triangle determined by the vertices 
$$
p_1=(0,-1), \qquad \left(\frac{1}{ m^+ +1},\frac{-1}{ m^+ +1}\right) \qquad \text {and} \qquad \left(\frac{1}{m^\star+1},\frac{-1}{m^\star+1}\right) .
$$
Then, there is a local piece of $W^u_{p_0}$ (attached to $p_0$) tangent to the line $y=1+ m^+ x$ contained in $T_d(\D)$. Moreover, if we parametrize $W^u_{p_0}\cap \{ y\le 1\}$ as $W^u_{p_0}:=\{ \varphi(t) \mid\ t \geq 0\}$, with $\varphi(0)=(0,1)$ and $\varphi(t)\subset \Int (T_d(\D))$ for $t\in(0,t_0)$ and  $\varphi(t_0) \in \partial T_d(\D)$ then 
$$
\varphi(t_0) \subset \partial T_d(\D) \cap \{y=x\} = \left\{(s,s)\mid\ \frac{1}{ m^+ +1}\le s \le  \frac{1}{m^\star +1} \right \}.
$$  
See  Figure \ref{fig:D_TD} (right).
\end{lemma}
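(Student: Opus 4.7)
The plan is to first invoke the stable/unstable manifold theorem at the hyperbolic fixed point $p_0$ of $T_d^2$, then pull back to $p_1$ through the homeomorphism $T_d$, determine the side on which the unstable curve bends relative to the $m^+$-edge by a second-order computation, and finally track the curve to identify its first exit from $T_d(\mathcal{D})$. Since $p_0$ is hyperbolic for $T_d^2$ with eigenvalues $\lambda^-\in(0,1)$ and $\lambda^+>1$, a smooth local unstable manifold $W^u_{p_0,\loc}$ exists and is tangent to $(1,m^+)$ at $p_0$. By Lemma \ref{lem:lines}(b), $T_d$ is a homeomorphism of $\mathbb{R}^2$ sending $p_1$ to $p_0$, so $T_d(W^u_{p_1,\loc})=W^u_{p_0,\loc}$, and the containment $W^u_{p_0,\loc}\subset T_d(\mathcal{D})$ reduces to $W^u_{p_1,\loc}\subset\mathcal{D}$ on the relevant branch. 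The edge of $\mathcal{D}$ through $p_1$ with slope $m^+$ coincides with the tangent line of $W^u_{p_1}$ at $p_1$, so whether the curve enters $\mathcal{D}$ is a second-order question.

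To settle it, write $W^u_{p_1,\loc}$ as a graph $y=h(x)=-1+m^+x+Ax^2+O(x^3)$ and compute $A$ by substituting into the invariance equation $T_d^2(x,h(x))=(X(x),h(X(x)))$ with $X(x)=\lambda^+ x+O(x^2)$, then matching the coefficients of $x^2$. This yields $A$ as an explicit rational expression in $d$ involving $m^+$, $\lambda^+$ and $\binom{d}{2}(1+m^+)^2$; a direct algebraic verification (using the eigenvalue equation of $DT_d^2(p_0)$ and the form \eqref{eq:m} of the eigenvector) shows that $A>0$. Since $m^+<m^\star$, the interior of $\mathcal{D}$ for small $x>0$ is $\{-1+m^+x<y<-1+m^\star x\}$, so $A>0$ combined with the strict inequality $h'(0)=m^+<m^\star$ places $W^u_{p_1,\loc}\subset\Int\mathcal{D}$ for small positive $x$, and pushing forward by $T_d$ establishes the first assertion of the lemma.

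For the exit point, parametrize $\varphi:[0,T)\to W^u_{p_0}\cap\{y\le 1\}$ with $\varphi(0)=p_0$ and let $t_0:=\sup\{t>0\mid \varphi([0,t])\subset T_d(\mathcal{D})\}$; by the previous step $t_0>0$, and by compactness $t_0<\infty$ with $\varphi(t_0)\in\partial T_d(\mathcal{D})$. The boundary of $T_d(\mathcal{D})$ consists of three arcs: the images under $T_d$ of the two edges of $\mathcal{D}$ through $p_1$ (both emanating from $p_0$) and the segment $T_d(\{y=-x\}\cap\mathcal{D})$, which lies on $\{y=x\}$ and is precisely the claimed interval. Applying $T_d^{-1}$ reduces the task to showing the corresponding exit of $W^u_{p_1}$ from $\mathcal{D}$ does not occur on either of the two straight edges through $p_1$. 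The $m^\star$-edge is transverse to $(1,m^+)$ at $p_1$ and lies on the far side of the local piece of $W^u_{p_1}$ constructed above, so a re-crossing would contradict the graph structure $y=h(x)$ with $h(x)<-1+m^\star x$ valid on the whole $x$-interval compatible with $\mathcal{D}$.

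The main obstacle, and the most delicate part of the argument, is excluding exit through the $m^+$-edge: tangency at $p_1$ a priori allows $W^u_{p_1}$ to return and touch the edge at some $x_\ast>0$. I would rule this out by studying the deviation $g(x):=h(x)-(-1+m^+x)$, which satisfies $g(x)=Ax^2+O(x^3)>0$ near $p_1$. The idea is to combine this local positivity with the fact that $W^u_{p_1}$ is invariant under $T_d^2$, which acts as expansion by $\lambda^+$ along the curve: a first zero $x_\ast\in(0,1/(m^++1)]$ of $g$ would yield, via the functional equation, a preimage in the parametrization arbitrarily close to $p_1$ with $g=0$ there, contradicting $A>0$. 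Once both edges are excluded, $\varphi(t_0)$ is forced onto the antidiagonal arc of $\partial T_d(\mathcal{D})$, finishing the proof.
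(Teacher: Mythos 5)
Your overall skeleton --- local unstable manifold at $p_0$, reduction to the triangle $\D$ through the homeomorphism $T_d$, and a first-exit argument over the three boundary arcs of $T_d(\D)$ --- is the right shape, but the two steps that carry the real content of the lemma do not hold as written. The most serious problem is your exclusion of an exit through the $m^+$-edge. You argue that a first zero $x_\ast>0$ of $g(x)=h(x)-(-1+m^+x)$ would produce, via the functional equation, points of $W^u_{p_1}$ arbitrarily close to $p_1$ with $g=0$, contradicting $A>0$. This implicitly uses that the line $y=-1+m^+x$ is invariant under $T_d^{-2}$, which it is not: the backward iterate of a touching point lies on $W^u_{p_1}$ and on $T_d^{-2}(\ell_{m^+})$, an entirely different curve, so it need not satisfy $g=0$ and no contradiction with the local sign of $g$ is obtained. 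The exclusion of the $m^\star$-edge has a parallel defect: transversality at $p_1$ and the inequality $h(x)<-1+m^\star x$ are purely local statements, and nothing you have said prevents the global manifold from folding back and meeting that edge first at a point far from $p_1$, where the graph representation $y=h(x)$ is no longer guaranteed. Finally, the positivity of the second-order coefficient $A$, which is the entire content of the first assertion in your scheme, is asserted rather than proved; verifying it uniformly for all odd $d\ge 3$ from the quadratic terms $\pm\binom{d}{2}(u+v)^2$ and the eigendata is a genuinely nontrivial computation that you have not carried out.

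The paper avoids all three issues by working with the curves themselves rather than with Taylor coefficients. It parametrizes the images $\gamma_m=T_d(\ell_m)$ and the preimages $\Gamma_m=T_d^{-1}(\ell_m)$ for $m\in\{m^+,m^\star\}$ and establishes enough monotonicity and convexity to fix the relative position of $T_d(\D)$, $\D$ and $T_d^{-1}(\D)$: in particular, $\gamma_{m^+}$ and $\Gamma_{m^+}$ are both tangent to $y=m^+x+1$ at $p_0$ but lie on opposite sides of that line, and $T_d^{-1}(\D)$ sits above $\{y=2/3\}$. The local containment of $W^u_{p_0}$ in $T_d(\D)$ then follows from the invariant manifold theorem together with these separation facts, with no second-order coefficient needed, and the exit argument becomes one line: if $\varphi(t_0)$ lay on $\gamma_{m^+}\cup\gamma_{m^\star}$, then $T_d^{-2}(\varphi(t_0))=\varphi(t_q)$ with $t_q\in(0,t_0)$ would lie on $\Gamma_{m^+}\cup\Gamma_{m^\star}\subset\partial\bigl(T_d^{-1}(\D)\bigr)$, a set disjoint from $\Int(T_d(\D))$, contradicting $\varphi(t_q)\in\Int(T_d(\D))$. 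If you want to salvage your route, the correct replacement for your edge-exclusion step is exactly this: compare the backward iterate of the exit point not with the edge itself but with the preimage of the edge, and show that this preimage misses the interior of the region you are confining the manifold to.
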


\begin{proof} The triangle $\D$ can also be represented as
\begin{equation*}
\D = \{ (t,-1+mt) \, | \    t \in [0,1/(m+1)], \ m \in [ m^+ ,m^\star]   \}.
\end{equation*}

The proof of this lemma will follow from an accurate description of the sets $T_d(\D)$ and $T_d^{-1}(\D)$, their relative position and geometry in the plane, and the behaviour of the map $T_d^{-2}:T_d(\D) \to T_d^{-1}(\D)$. 
\vglue 0.2truecm
\noindent {\bf The shape of  $T_d(\D)$.} 
 We consider the decomposition of $\D$ into the segments 
\begin{equation} \label{eq:lms}
\ell_m=\{(t,-1+mt),\ t \in [0,1/(m+1)]\}, \qquad \text{with} \qquad m \in [ m^+,m^{\star}].
\end{equation}
If we write $\gamma(t):=\gamma_m(t)=T_d\left(\ell_m\right):=(x_m(t),y_m(t))=:\left(x(t),y(t)\right)$ we have 
\begin{equation} \label{imatgescorbesenTD}
x(t) = mt -1 - ( (m+1)t -1)^d  \qquad \text{and} \qquad  y(t) = mt -1 - 2( (m+1)t -1)^d.
\end{equation}
Thus, the first derivatives of $x(t)$ and $y(t)$ are given by
\[
x'(t) = m- d(m+1) ( (m+1)t -1)^{d-1} \quad \text{and} \quad  y'(t) = m- 2d(m+1)( (m+1)t -1)^{d-1}.
\]
Easy computations show that $x^{\prime}(t)$ and $y^{\prime}(t)$ vanish at the points  
\[
r^{\pm} = \frac{1}{m+1} \left[ 1 \pm 
\left({\frac{m}{d(m+1)} } \right)^{1/(d-1)} \right]  \quad \text{and} \quad s^{\pm} = \frac{1}{m+1} \left[ 1 \pm 
\left({\frac{m}{2d(m+1)} } \right)^{1/(d-1)}  \right],
\]
respectively. Moreover, 
$$
0 < r^{-} < s^{-}<\frac{1}{m+1} < s^+ < r^+ \qquad \text{and} \qquad x\left(\frac{1}{m+1}\right)=y\left(\frac{1}{m+1}\right)=\frac{-1}{m+1},
$$
where $t=1/(m+1)$ corresponds to the common maximum of $x'(t)$ and $y'(t)$. See  Figure \ref{fig:D_TD} (left). In summary, the components $x(t)$ and $y(t)$ of the curve $\gamma(t)$ are polynomial functions in $t$, having a unique minimum in the interval $[0,1/(m+1)]$ located at $t=r^{-}$ and $t=s^{-}$,  respectively, and sharing the same negative value, $-1/(m+1)$, at $t=1/(m+1)$. See the middle picture in Figure 
\ref{fig:D_TD}.

\begin{figure}[]
    \centering
     \includegraphics[width=0.9\textwidth]{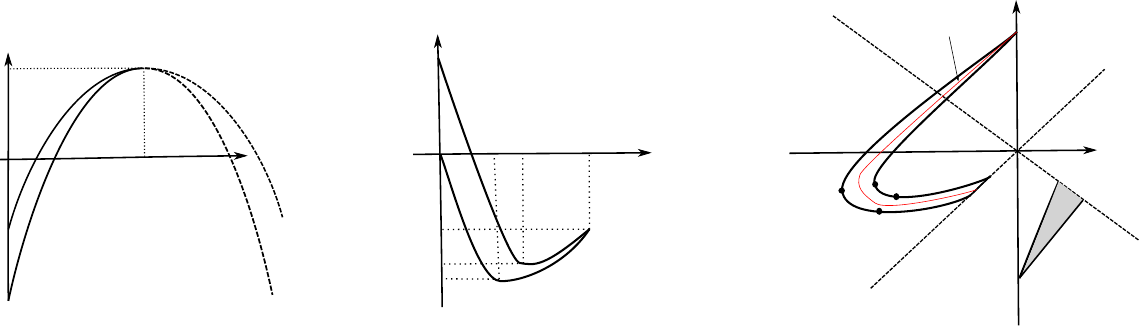}
               \put(-315,72){\tiny $(t,x'(t))$}
               \put(-385,30){\tiny $(t,y'(t))$}
             \put(-393,60){\tiny $r^-$} 
               \put(-378,51){\tiny $s^-$}
          \put(-356,50){\tiny $\frac{1}{m+1}$}
            \put(-265,32){\tiny $\frac{-1}{m+1}$}
             \put(-255,90){\tiny $1$}
                    \put(-404,88){\tiny $m$}
     \put(-199,66){\tiny $\frac{1}{m+1}$}
          \put(-225,64){\tiny $r^-$}
     \put(-214,64){\tiny $s^-$}
     \put(-240,85){\tiny $(t,y(t))$}
          \put(-200,20){\tiny $(t,x(t))$}
     \put(-110,73){\small $T_d(\D)$}
       \put(-140,45){\tiny $\gamma_{ m^+}(r^{-})$}
              \put(-100,33){\tiny $\gamma_{ m^+}(s^{+})$}
     \put(-40,100){\tiny $(0,1)$}
          \put(-9,90){\tiny $y=x$}
          \put(-124,110){\tiny $y=-x$}
          \put(-74,105){\tiny ${W}^u_{p_0}$}
     \put(-26,26){\small $\D$}
          \put(-69,15){\tiny $(0,-1)$}
          \put(-44,15){\tiny $\bullet$}
          \put(-45,100){\tiny $\bullet$}
           \caption{\small{Left figure shows the graphs of the curves $x^{\prime}(t)$ and $y^{\prime}(t)$. The center picture shows the graphs of the curves $x(t)$ and $y(t)$. Finally, the right picture displays a sketch of the triangle $\D$ and its image $T_d(\D)$.}}
    \label{fig:D_TD}
    \end{figure}

To conclude the description of the shape of $\gamma$, see Figure \ref{fig:D_TD} (right).  Let us prove that its image can be represented as the union of two graphs with respect to the variable $x$ (i.e., it admits a piecewise parametrization of graphs with respect to $x$). We write 
$$
\gamma(x)= (x,\gamma^{(2)}(x)) :=  \left(x,y\left(t(x)\right)\right),
$$
where $t(x):=x^{-1}(t)$ is one of the two branches of the inverse of $x(t)$.
 Some direct computations show that
\begin{equation}\label{eq:derivatives_ym}
\begin{split}
\frac{d\gamma^{(2)}}{dx}&=\frac{dy}{dt}\left(\frac{dx}{dt}\right)^{-1}, \\ 
\frac{d^2\gamma^{(2)}}{dx^2}&=\left(\frac{dx}{dt}\right)^{-3}\left(\frac{d^2y}{dt^2}\frac{dx}{dt}-\frac{dy}{dt}\frac{d^2x}{dt^2}\right)=\left(\frac{dx}{dt}\right)^{-3}\left(d(d-1)m(m+1)^2\left((m+1)t-1\right)\right)^{d-2},
\end{split}
\end{equation}
everything evaluated at the corresponding branch of $t=t(x)$.
We  denote $\gamma^{(2)}_{\rm u}:=\gamma^{(2)}_{{\rm u},m}$ and  $\gamma^{(2)}_{\ell}:= \gamma^{(2)}_{{\ell},m}$ the functions corresponding to the upper (concave) and lower (convex)  graphs,  respectively. 

From the previous discussion, $dx/dt$ has a unique zero at $t=r^{-}$  and it is monotone in the whole interval $[0,1/(m+1)]$, see Figure \ref{fig:D_TD} (left).
For the upper branch, corresponding to $0\leq t <r^{-}$, we have $dx/dt<0$ and therefore $\gamma^{(2)}_{\rm u}(x)$ is increasing and concave (see \eqref{eq:derivatives_ym}) and for the lower branch, $r^{-}< t \le 1/(m+1)$, we have $dx/dt>0$ and therefore $\gamma^{(2)}_{\ell}(x)$ is  convex having a minimum at $x(s^-)$ (see again \eqref{eq:derivatives_ym}). See  Figure \ref{fig:D_TD} (right), we have drawn (qualitatively) the curve $\gamma$ for the values  $m=m^+$ and $m=m^{\star}$. We remark that $\gamma^{(2)}_{\rm u,m^+} (x)$ is tangent at $p_0$ to the line $y=m^+ x+1$ since it is the image of the side of $\D$ tangent to $W^u_{p_1}$. 
Since $T_d$ sends the line $\{y=-x\}$ to $\{y=x\}$ the images of all the curves $\gamma_m$ end up at $\{y=x\}$. 
All together determines the shape of $T_d(\D)$.
Moreover, in the light of the above arguments we have that
$$
\partial 
T_d(\D)=\gamma^{(2)}_{{\rm u},m^+} \cup \gamma^{(2)}_{{\ell},m^+}  \cup \gamma^{(2)}_{{\rm u},m^\star} \cup \gamma^{(2)}_{{\ell},m^\star} \cup \{(x,x) \mid\  \frac {-1}{m^+ +1} < x < \frac{-1}{m^\star+1}\}.
$$ 
Hereafter we will refer to 
$$\gamma^{(2)}_{{\rm u},m^+} \cup \gamma^{(2)}_{{\rm l},m^+} \qquad \text {and} \qquad \gamma^{(2)}_{{\rm u},m^\star} \cup \gamma^{(2)}_{{\rm l},m^\star}
$$
as the left and  right the boundaries of $T_d(\D)$, respectively. See Figure \ref{fig:D_TD} (right) and Figure \ref{fig:D_T_minus_D}.

\vglue 0.2truecm

\noindent {\bf The shape  of $T_d^{-1}(\D)$.}  We consider the same decomposition of $\D$ into the  segments $\ell_m$ as in \eqref{eq:lms}. We denote $\Gamma(t):=\Gamma_m(t)=T_d^{-1}\left(\ell_m\right):=(\alpha_m(t),\beta_m(t))=:(\alpha(t),\beta(t))$. We have 
\begin{equation}\label{eq:alpha_beta}
\alpha(t) = (m-2)t -1 + ( (1-m)t +1)^{1/d} , \quad  \beta(t) = (2-m)t +1,
\quad t\in [0, 1/(m+1)].
\end{equation}
  
Therefore, the first and second derivatives of $\alpha(t)$ and $\beta(t)$ are given by
\begin{equation*}
\begin{split}
&\alpha'(t) = m-2 + \frac{1-m}{d} ((1-m)t +1)^{(1-d)/d},  \ \ \ \qquad  \beta'(t) = 2-m, \\
&\alpha''(t) = \frac{1-d}{d^2} (1-m)^2 ((1-m)t +1)^{(1-2d)/d}<0, \quad \beta''(t)=0 .
\end{split}
\end{equation*}
Clearly $\beta'(t) < 0$ since $m \geq m^+ > 2$. Next, we focus the attention on $\alpha^{\prime}(t)$. The line $t=1/(m-1) $ is a vertical asymptote (outside the domain $(0,1/(m+1))$) and simple computations show that $\alpha'(t)=0$  if and only if $t=t^{\pm}$ where
\[
t^{\pm} :=t^{\pm}_m = \frac{1}{m-1} \pm \left( \frac{m-1}{d^d(m-2)^d}\right)^{1/(d-1)}.
\]
  
Some further computations show that
\begin{equation} \label{eq:t+-}
 t^{-}_{m^+} <0, \qquad t^{-}_{m^{\star}} >\frac{1}{m^{\star}+1}  \qquad \text{and} \qquad t_m^{+} > \frac{1}{m-1}, \qquad \forall m\in [m^+,m^{\star}].
\end{equation}
  
Since $\alpha^{\prime}_{m^+}(0)
= 4(d-1 ) \left(1-d+\sqrt{1-10d+9d^2}\right)^{-1} +1/d -2<0$ and $\alpha^{\prime}_{m^\star}(0)>\frac12 (3-5/d) >0$, it follows from the previous arguments and \eqref{eq:t+-} that $\alpha_{m^+}(t)$ is monotonically decreasing and $\alpha_{m\star}(t)$ is monotonically increasing in the considered domain. Consequently, $\Gamma_{m^+}$ and 
$\Gamma_{m^\star}$ can be expressed as graphs of  monotone functions of the form
$$
\Gamma(x)=(x,\Gamma^{(2)}(x)):= \left(x,\beta\left(t(x)\right)\right),
$$
where $t(x):=\alpha^{-1}(t)$ for $m=m^+$ and $m=m^{\star}$, respectively. 
We have
\begin{equation}
	\label{derivadesalphabeta}
\begin{split}
\frac{d\Gamma^{(2)}}{dx}&=\frac{d\beta}{dt}\left(\frac{d\alpha}{dt}\right)^{-1},  \\ 
\frac{d^2\Gamma^{(2)}}{dx^2}&=-\left(\frac{d\alpha}{dt}\right)^{-3}
\frac{d\beta}{dt}\frac{d^2\alpha}{dt^2}=-\left(\frac{d\alpha}{dt}\right)^{-3}\left(
\frac{d-1}{d^2}
\frac{(m-2)(m-1)^2}{\left(  (1-m)  t+1 \right)^{(2d-1)/d}}
\right),
\end{split}
\end{equation}
everything evaluated at $t=t(x)$. 
Indeed, taking into account \eqref{derivadesalphabeta}, when $m=m^+$, $\Gamma^{(2)}$ is increasing and convex, while when $m=m^\star$, $\Gamma^{(2)}$ is decreasing and concave. From \eqref{eq:alpha_beta} we conclude that 
$$
\beta(t) > \beta (1/(m+1)) = 3/(m+1)> 3/(m^\star+1)=2/3,
$$
and then the preimage $T^{-1}(\D) $ is above the line $\{y =2/3\}$. Finally we notice that the image by $T^{-1}_d$ of the segment $\D\cap \{y=-x\}$ is contained in (the graph of) 
$$
x=\phi(y):=-y + \left(\frac23 y \right)^{1/d}.
$$
Then 
$$
\phi'(y)=-1 +\frac{2}{3d} \left(\frac23 y\right)^{(1-d)/d} < -1+\frac{3}{2d}<  -\frac12,   
$$
and the function $y = \phi^{-1} (x) $ is decreasing in the corresponding domain.

In particular the curve $\Gamma_{m^+}(x)$ belongs to the second quadrant, while $\Gamma_{m^\star}(x)$ belongs to the first one. In Figure \ref{fig:D_T_minus_D} we display $T_d(\D)$ and $T_d^{-1}(\D)$ and we can see the relative position of these two sets and the initial triangle $\D$. We emphasize that, from  arguments above, $\gamma_{m^+}(x)$ and $\Gamma_{m^+}(x)$ are both tangent to the line $y=m^+ x+1$, but, using the convexity properties,  $\gamma_{m^+}(x)$ is below this line while $\Gamma_{m^+}(x)$ is above it, hence their relative position illustrated in Figure \ref{fig:D_T_minus_D} is the right one.

\begin{figure}[ht]
    \centering
     \includegraphics[width=0.4\textwidth]{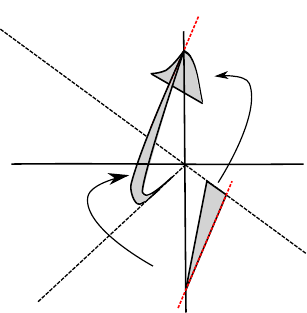}
     \put(-140,73){\tiny $T_d(\D)$}
          \put(-25,125){\tiny $T_d^{-1}(\D)$}
     \put(-38,72){\tiny $y=m^+x-1$}
          \put(-57,164){\tiny $y=m^+ x+1$}
    \put(-95,152){\tiny$(0,1)$}
       \put(-170,135){\tiny $y=-x$}
      \put(-140,10){\tiny $y=x$}
     \put(-50,40){\tiny $\D$}
         \put(-65,12){\tiny $(0,-1)$}
           \caption{\small{A qualitative representation of the triangle $\D$ and the sets $T_d(\D)$ and $T_d^{-1}(\D)$.}}
    \label{fig:D_T_minus_D}
    \end{figure}

Consider now $T_d^{-2}:T_d(\D)\to T_d^{-1}(\D)$. From the stable/unstable manifold theorem and the relative position and geometry of the sets $T_d(\D)$ and $T_d^{-1}(\D)$, we can conclude that, locally, $W^{u}_{p_0}$ exists, it is tangent to $y=m^+x+1$, and it is contained in $T_d(\D)$. In particular we also conclude that there is a local piece of 
$W^{u}_{p_1}$ attached to $p_1$ belonging to $\D$.

We claim that ${W}^u_{p_0}$ may only {\it leave}  $T_d(\D)$ through the piece of the boundary given by $\partial T_d(\D) \cap \{y=x\}$ (later we will see that $W^u_{p_0}$  does leave  $T_d(\D)$ through this boundary). To check the claim we first observe that   $W^{u}_{p_0}\cap \{y\le 1\}$ can be parametrized $W^u_{p_0}:=\{\varphi(t)\mid\   t \geq 0 \}$, with $\varphi(0)=(0,1)$ (see \cite{ParametrizationMethod1}). Second, we suppose it leaves $T_d(\D)$ either for the left or the right boundaries of $T_d(\D)$, and we get a contradiction.

Let $p=\varphi(t_0)$ for some $t_0>0$ such that $\{\varphi(t),\ t\in (0,t_0)\} \subset \Int (T_d(\D))$ for all $t\in (0,t_0)$ and $p\in \partial T_d(\D) \setminus \{x=y\}$ (that is, it leaves $T_d(\D)$ through the left or right boundaries of it). Consider $q:=T_d^{-2}(p)$. Since $W^u_{p_0}$ is invariant by $T_d^{-2}$ we have that $q=\varphi(t_q)$ for some $t_q\in (0,t_p)$. However, we also have 
$$
q\in \partial \left(T_d^{-1}(\D)\right)\setminus T_d^{-1}\left(\partial \D \cap \{y=-x\}\right),$$
which provides a contradiction (see Figure \ref{fig:D_T_minus_D}).
\end{proof}

Let $\mathcal E$ be the closed triangle determined by the vertices $\tau_0=(0,1)$, $\tau_1=(-1/2,0)$ and $\tau_2=(-1/3,0)$. 

Next two lemmas refer to the set $T_d(\D)\cap \{y\geq 0\}$. First we show that this set belongs to the triangular region $\mathcal E$  and, second,  we will give relevant information of the dynamics of $T^{-2}|_{\mathcal E}$ (and therefore in $T_d(\D)\cap \{y\geq 0\}$). All together implies two main properties for $W^u_{p_0}$. On the one hand,  we will prove that $W^u_{p_0} \cap [-1/2,0]\times \{0\} \ne \emptyset$ and, on the other hand,  we will prove that $W^u_{p_0} \cap \{y=x\} \ne \emptyset$ (such intersection happens on the fourth quadrant). See Figure \ref{fig:mathcalC}.

\begin{lemma} \label{lem_c}
We have
$$
T_d(\D)\cap \{y\geq 0\} \subset \mathcal E.
$$
\end{lemma}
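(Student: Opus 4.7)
The plan is to exploit the foliation of $\D$ by the segments $\ell_m$ for $m\in[m^+,m^\star]$ from \eqref{eq:lms}, which under $T_d$ becomes a foliation of $T_d(\D)$ by the curves $\gamma_m=T_d(\ell_m)$ parametrized in \eqref{imatgescorbesenTD}; it then suffices to verify the containment fiberwise along each $\gamma_m$. Writing $\mathcal E=\{y\ge 0,\ y-2x\le 1,\ y-3x\ge 1\}$ and noting that $y\ge 0$ is the standing hypothesis of the lemma, only the other two linear constraints need to be checked.

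A direct calculation on $\gamma_m$ using \eqref{imatgescorbesenTD} yields the identities
\[
y(t)-2x(t)=1-mt,\qquad y(t)-3x(t)=2(1-mt)+((m+1)t-1)^d.
\]
The first gives $y-2x\le 1$ on all of $T_d(\D)$ at once, since $m,t\ge 0$. The substantive step is to show that the hypothesis $y\ge 0$ upgrades the second identity to $y-3x\ge 1$.

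To do so I substitute $w:=1-(m+1)t\in[0,1]$; since $d$ is odd $((m+1)t-1)^d=-w^d$, and with $F(w):=(m+1)w^d$ the two conditions read
\[
\text{(A):}\ 2F(w)\ge mw+1, \qquad \text{(B):}\ F(w)\le 2mw-(m-1),
\]
corresponding to $y\ge 0$ and to $y-3x\ge 1$ respectively. The pivotal observation is that at $w_0:=\tfrac{2m-1}{3m}$ both linear right-hand sides equal $\tfrac{m+1}{3}$, so at $w_0$ both (A) and (B) collapse to the single comparison of $F(w_0)$ with $\tfrac{m+1}{3}$. Since $w_0<\tfrac23$ and $d\ge 3$ give $w_0^d\le(\tfrac23)^3=\tfrac{8}{27}<\tfrac13$, one has $F(w_0)<(m+1)/3$; hence (A) fails at $w_0$ while (B) holds there. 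Because $F$ is convex and strictly increasing, the set of $w\in[0,1]$ where (A) holds is an interval $[w_A,1]$ with $w_A>w_0$; and for any $w\in[w_A,1]\subset[w_0,1]$ the convexity of $F-h$ with $h(w):=2mw-(m-1)$, together with $(F-h)(w_0)<0$ and $(F-h)(1)=0$, forces $(F-h)(w)\le 0$, i.e.\ (B) holds. This proves (A)$\Rightarrow$(B) and completes the argument.

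The main obstacle is discovering the pivotal point $w_0=(2m-1)/(3m)$ at which the two linear constraints coalesce to the common value $(m+1)/3$; once it is found, the whole implication reduces to the elementary bound $w_0^d<1/3$ provided by $d\ge 3$, and the remainder is a short convexity argument.
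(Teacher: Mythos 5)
Your proof is correct, and it takes a genuinely different route from the paper's. The paper leans on the geometric description of $T_d(\D)$ from the preceding lemma: it only checks the two extreme boundary curves $\gamma_{m^+}$ and $\gamma_{m^\star}$, using tangency of $\gamma_{m^+}$ to $y=m^+x+1$ with $m^+>2$ together with concavity to handle the left side $y=2x+1$, and then proving that $\gamma_{m^\star}$ crosses $\{y=0\}$ at some $x<-1/3$ — which it does by an explicit numerical estimate for $d=3$ followed by an implicit-differentiation argument showing that this crossing point decreases in $d$. You instead verify both linear constraints of $\mathcal E$ leafwise on every $\gamma_m$, $m\in[m^+,m^\star]$: the identity $y-2x=1-mt\le 1$ disposes of one side for free, and the substitution $w=1-(m+1)t$ turns the remaining implication $\{y\ge 0\}\Rightarrow\{y-3x\ge 1\}$ into the implication (A)$\Rightarrow$(B) for the convex function $F(w)=(m+1)w^d$, settled by the observation that both conditions reduce at $w_0=\tfrac{2m-1}{3m}$ to comparing $F(w_0)$ with $\tfrac{m+1}{3}$, plus the elementary bound $w_0^d<(2/3)^3<1/3$ and a two-point convexity estimate on $[w_0,1]$ (using $(F-h)(1)=0$). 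I checked the identities, the value $2mw_0-(m-1)=\tfrac{m+1}{3}$, the location $w_0\in(1/2,2/3)$ for $m>2$, and the interval structure of $\{2F\ge mw+1\}$ forced by convexity and $G(0)<0<G(1)$; all are sound. What your approach buys is uniformity: it treats all odd $d\ge 3$ and all leaves at once, needs neither the concavity/graph description of $T_d(\D)$ from the previous lemma nor the case $d=3$ computation and the monotonicity-in-$d$ step, and as a byproduct covers the interior of $T_d(\D)$ directly rather than via its boundary. The only blemish is the phrase ``both linear right-hand sides equal $\tfrac{m+1}{3}$'' — the right-hand side of (A) at $w_0$ is $\tfrac{2(m+1)}{3}$, and it is only after dividing (A) by $2$ that both conditions become the comparison of $F(w_0)$ with $\tfrac{m+1}{3}$, as you correctly state in the next clause.
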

\begin{proof}
We will check that the left and the right boundaries of  
$T_d(\D)\cap \{y\geq 0\}$, given by pieces of the curves $\gamma_{m^+}$ and 
$\gamma_{m^\star}$, respectively, are contained in $\mathcal E$. By the discussion we did when analyzing the shape of $T_d(\D)$ we know that both curves can be written as graphs of concave functions that only intersect at the point $p_0=(0,1)$ (see Lemma \ref{lemma:d_odd_a_1_unstable}). Moreover the slope $s_{m^+}$ of $\gamma_{m^+}$ at $t=0$ is $m^+ >2$ (the slope of the left side of $\mathcal E$). Indeed,
$$
2<s_{m^+} \le 6/(\sqrt{13}-1) \lessapprox 2.3028 .
$$
All together implies that the statement of the lemma follows from proving that the (only) intersection of $\gamma_{m^\star}$ with $y=0$ happens at a point $x<-1/3$.

Consider the second component $y(t)$ of  $\gamma_{m^{\star}}$ defined for 
$t\in [0,1/(m^\star + 1)]$.
Since $y(0)=1$, $y(1/(m^\star + 1))=
-1/(m^\star + 1)<0$ and $y''(t) >0$ there exists a unique $t_1\in  [0,1/(m^\star + 1)] $ such that 
$y(t_1)=0$. See Figure \ref{fig:D_TD} (center). 

When $d=3$ we can localize  $t_1$ with some precision. Let 
$t_1^-= 1/18$ and $t_1^+= 1/16$. Both values belong to $[0, r^-]$ where the functions $x(t)$ and $y(t)$ are decreasing. See \eqref{imatgescorbesenTD}. We have
$$
y(t_1^-)= -\frac{29}{36}+ 2 \Big(\frac{27}{36}\Big)^3 > \frac{1}{30} 
\qquad \text{and}
 \qquad 
y(t_1^+)= -\frac{25}{32}+ 2 \Big(\frac{23}{32}\Big)^3 < \frac{-1}{30}.
$$
This means that $t_1\in (1/18, 1/16)$ and that, since $\gamma_{m^{\star}}$ is the graph of a concave function,
$
x(t_1) < x(t_1^-) = -\frac{29}{36}+  \Big(\frac{27}{36}\Big)^3 < -\frac{1}{3}
$.
By concavity we get that $\gamma_{m^\star}\cap \{y\ge 0\}
\subset \mathcal E$.

To deal with the general value of $d$ odd, $d\ge 3$, we will see that if we consider the intersection point $(x(t_1), 0)$ as  a function of $d$, it is decreasing so that for $d\ge 3$, $x(t_1) < -\frac{1}{3} $.
Indeed, we compute the derivative of $x(t_1)$ with respect to $d$.

We write $y=y(t,d)$.
Let $t_1(d) $ be the parameter such that $y(t_1(d),d)=0$. 
We want to compute $(x(t_1(d),d))'$, where prime stands for the derivative with respect to $d$.

Derivating impliticly (and simplifying notation) we have
$$
(t_1(d))' = -\frac{\partial y}{\partial  d} (t_1(d),d) /  \frac{\partial y}{\partial  t} (t_1(d),d)=:-\left(\frac{\partial y}{\partial  d}/\frac{\partial y}{\partial  t}\right)|_{(t_1(d),d)}=:-\left(\frac{\partial y}{\partial  d}/\frac{\partial y}{\partial  t}\right).
$$
Then, 
\begin{align*}
(x(t_1(d),d))' & = \frac{\partial x}{\partial  t} \times (t_1(d))'
+ \frac{\partial x}{\partial  d}  \\
&  =
\frac{\partial x}{\partial  t}\times \Big(-
 \frac{\partial y}{\partial  d} / \frac{\partial y}{\partial  t}\Big) + \frac{\partial x}{\partial  d} 
 \\
& = 
\Big(1/ \frac{\partial y}{\partial  t} \Big) \times
\Big(
\frac{\partial x}{\partial  d} \times \frac{\partial y}{\partial  t}
-\frac{\partial x}{\partial  t} \times \frac{\partial y}{\partial  d}
\Big).
\end{align*}
Taking the corresponding derivatives from equation \eqref{imatgescorbesenTD} and simplifying we get 
$$
(x(t_1(d),d))' =-\Big(1/ \frac{\partial y}{\partial  t} \Big) 
m^{\star}(1-(m^{\star}+1)t)^d \log(1-(m^{\star}+1)t) <0.
$$
Indeed, we are evaluating the right side of the above equation at the point $(t_1(d),d)$ with $0<t_1(d)<r^{-}(d)$. Thus, we have 
$$
\left(1/ \frac{\partial y}{\partial  t}\right)|_{(t_1(d),d)} < 0 \qquad \text{and} \qquad 0<1-(m^{\star}+1)t <1. 
$$
\end{proof}

Next lemma tell us that, while the iterates by $T_d^{-2}$  remain
in $\mathcal E$, the sequence of their second coordinates of them is strictly increasing. See Figure \ref{fig:mathcalC}.

\begin{lemma}\label{lem:ming}
Let $(f(x,y),g(x,y)):=T_d^{-2}(x,y)$.   If $(x,y)\in \mathcal E$ then $g(x,y) \geq y$ and the equality only holds when $(x,y)=(0,1)$. 
\end{lemma}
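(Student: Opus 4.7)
The plan is to express $g(x,y)$ explicitly using the inverse formula \eqref{eq:T_inverse_odd}, reduce the inequality $g(x,y)\ge y$ to a one-variable statement in $w:=y-x$, and then verify it on $\mathcal E$ using the affine geometry of the triangle encoded in barycentric coordinates.

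Applying $T_d^{-1}$ twice, and writing $T_d^{-1}(x,y)=(u,v)$ with $u=-2x+y+(x-y)^{1/d}$ and $v=2x-y$, I would get
\[
g(x,y) \;=\; 2u-v \;=\; -6x+3y+2(x-y)^{1/d}.
\]
Since every $(x,y)\in\mathcal E$ satisfies $x\le 0\le y$ and $d$ is odd, I may write $(x-y)^{1/d}=-w^{1/d}$ with $w:=y-x\ge 0$, which turns $g(x,y)-y\ge 0$ into
\begin{equation*}
w+2|x|\ \ge\ w^{1/d}. \qquad (\star)
\end{equation*}

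I would then use the barycentric parametrization $(x,y)=\lambda_0(0,1)+\lambda_1(-1/2,0)+\lambda_2(-1/3,0)$ with $\lambda_i\ge 0$ and $\lambda_0+\lambda_1+\lambda_2=1$. A direct substitution gives
\[
w \;=\; \lambda_0+\tfrac12\lambda_1+\tfrac13\lambda_2 \;=\; 1-\tfrac12\lambda_1-\tfrac23\lambda_2 \;\le\; 1, \qquad w+2|x| \;=\; 1+\tfrac12\lambda_1 \;\ge\; 1.
\]
Since $w\in[0,1]$ forces $w^{1/d}\le 1$, the chain $w+2|x|\ge 1\ge w^{1/d}$ establishes $(\star)$ immediately. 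For the equality case, $w+2|x|=1$ forces $\lambda_1=0$ while $w^{1/d}=1$ forces $w=1$; together these give $\lambda_1=\lambda_2=0$ and $\lambda_0=1$, so $(x,y)=(0,1)$ is the unique equality point.

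I do not anticipate any serious obstacle. The only mildly delicate point is the sign bookkeeping for $(x-y)^{1/d}$ when $x-y\le 0$ and $d$ is odd; once that is in place, the key observation is that $\mathcal E$ has been tailored so that the two affine quantities $y-x$ and $y-3x$ bracket the value $1$ from opposite sides on the whole triangle, which is precisely what the inequality $(\star)$ demands.
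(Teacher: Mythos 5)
Your proof is correct, and it takes a genuinely different route from the paper's. Both arguments begin from the same explicit formula $g(x,y)-y=2y-6x+2(x-y)^{1/d}=:G(x,y)$, but the paper then shows that the gradient of $G$ never vanishes (indeed $\partial_x G+\partial_y G=-4$), concludes that the minimum of $G$ over $\mathcal E$ is attained on $\partial\mathcal E$, and checks positivity separately on each of the three sides via the one-variable functions $\chi_1,\chi_2,\chi_3$. You instead rewrite the inequality as $y-3x\ge (y-x)^{1/d}$ and observe that $\mathcal E$ satisfies the two linear bounds $y-3x\ge 1$ (the triangle lies on one side of the edge $y=3x+1$) and $0\le y-x\le 1$ (checked at the vertices and propagated by convexity via your barycentric computation), whence $y-3x\ge 1\ge (y-x)^{1/d}$. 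This is shorter, entirely calculus-free, and makes transparent why the vertices of $\mathcal E$ were chosen as they are; the equality case ($\lambda_1=\lambda_2=0$, hence $(x,y)=(0,1)$) also drops out immediately, whereas the paper leaves the boundary verifications to ``elementary methods.'' The only delicate points --- which you handle correctly --- are the convention $(x-y)^{1/d}=-(y-x)^{1/d}$ for the odd real root and the fact that $x\le 0\le y$ throughout $\mathcal E$ so that $|x|=-x$.
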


\vglue 0.1truecm

\begin{proof}
 From \eqref{eq:T_inverse_odd} we have that $
g(x,y)=3y-6x+2(x-y)^{1/d} $,
and then 
$g(x,y)\ge y$ in $\mathcal E$ if and only if 
$$
G(x,y):=2y-6x+2(x-y)^{1/d}>0, \qquad \forall (x,y)\in\mathcal E \setminus \{(0,1)\}.
$$
  
To prove this inequality we will show that $G$ restricted to $\mathcal E$ has a global minimum
$G=0$  at $(0,0)$ which is only attained at $(0,0)$.
A direct computation shows that the partial derivatives of $G$ cannot vanish simultaneously, therefore the minimum has to be attained at the boundary of $\mathcal E$. 
It is clear that  the restriction of the function $G$ on each of the three segments of $\partial \mathcal E$ is given by
\begin{equation*}
\begin{split}
\chi_1(x):=
&G(x,0)=2\left(-3x+x^{1/d}\right), \quad \quad\quad\quad\quad\quad\quad   \  x\in [-1/2,-1/3],\\
\chi_2(x):=&G(x,2x+1)=2\left(1-x-(1+x)^{1/d}\right), \quad \quad x\in [-1/2,0], \\
\chi_3(x):=&G(x,3x+1)=2\left(1-(1+2x)^{1/d}\right), \quad \quad\quad\ x\in [-1/3,0].
\end{split}
\end{equation*}
  
Using elementary methods we can check that indeed $\chi_1(x)>0$, $\chi_2(x)\ge 0$ and 
$\chi_3(x)\ge 0$ in the indicated intervals and that $\chi_2(x)= 0$, 
$\chi_3(x)= 0$ only hold when $x=0$. 
\end{proof}

\begin{lemma} \label{lem:w-u_intersections}
The unstable manifold $W^u_{p_0}$ crosses the interval $I_0:=T_d(\D) \cap \{y=x\}$ at some point $(\wh  p, \wh  p)$ such that 
$ \frac {1}{m^+    +1} < \wh  p < \frac{1}{m^\star+1}$.
Moreover, the piece of  $W^u_{p_0}$ from $(0,0)$ to $(\wh  p, \wh  p)$ is contained in $T_d(\D)$. We also have that this piece of $W^u_{p_0}$ cuts the segment
$(-1/2, -1/3) \times \{0\} \subset \R^2$. 
\end{lemma}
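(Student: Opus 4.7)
The strategy is to combine Lemma~\ref{lemma:d_odd_a_1_unstable}, which already says that any exit point of $\varphi$ from $T_d(\D)$ must lie on $\partial T_d(\D)\cap\{y=x\}$, with the monotonicity of Lemmas~\ref{lem_c} and~\ref{lem:ming} in order to force such an exit. First I would rule out that $\varphi$ stays in $\mathcal{E}$ for all time. Parametrise $\varphi$ so that $T_d^{2}$ acts on the parameter by multiplication by $\lambda^+>1$. By Lemma~\ref{lem_c} the portion of $T_d(\D)$ with $y\ge 0$ lies in $\mathcal{E}$, and by Lemma~\ref{lem:ming} the second coordinate is strictly increased by $T_d^{-2}$ on $\mathcal{E}\setminus\{p_0\}$. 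If $\varphi([0,\infty))\subset\mathcal{E}$, then along $\varphi((\lambda^+)^n s_0)$ the second coordinates form a strictly decreasing sequence bounded below by $0$; a subsequential limit $(x^\ast,y^\ast)\in\mathcal{E}$ would have $T_d^{-2}(x^\ast,y^\ast)$ with second coordinate $y^\ast$, so the equality case of Lemma~\ref{lem:ming} forces $(x^\ast,y^\ast)=p_0$, contradicting local injectivity of $\varphi$ at $p_0$ (the corresponding parameters tend to $\infty$, not to $0$).

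Hence $\varphi$ must cross $\{y=0\}$ while still inside $T_d(\D)$. The two lateral sides of $\mathcal{E}$ lie on the rays through $p_0$ of slopes $2$ and $3$, which are strictly steeper than the tangent directions of $\gamma_{m^+}$ at $p_0$ (slope $m^+\in(2,m^+_3)$) and of $\gamma_{m^\star}$ (computed from $DT_d(0,-1)$ acting on $(1,m^\star)$). Combining concavity of $\gamma_{m^+}$ on the left with the computation already carried out in the proof of Lemma~\ref{lem_c} on the right, I would conclude $T_d(\D)\cap\{y=0\}\subset(-1/2,-1/3)\times\{0\}$ with strict endpoints. Therefore the first time at which $\varphi$ meets $\{y=0\}$ produces a point in the open segment $(-1/2,-1/3)\times\{0\}$, which is exactly the last claim of the lemma.

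It remains to show that past this crossing, $\varphi$ truly leaves $T_d(\D)$, and at a point strictly interior to $\partial T_d(\D)\cap\{y=x\}$. For the former I would repeat the accumulation-plus-injectivity scheme, now in $T_d(\D)\cap\{y<0\}$: any accumulation point $q^\ast$ of a bounded tail has $T_d^{-2n}(q^\ast)\to p_0$ along $W^u_{p_0}$, so some backward iterate eventually enters $\mathcal{E}$ and one reduces to Step~1. Lemma~\ref{lemma:d_odd_a_1_unstable} then places $\varphi(t_0)=(\hat p,\hat p)$ on $\partial T_d(\D)\cap\{y=x\}$. Strict inequality is obtained by excluding the two corners: at each corner the two boundary pieces meeting there are transverse, and being an exit point at such a corner would, upon $T_d^{-2}$-pullback, force $\varphi$ to lie on one of the curves $\gamma_{m^\pm}$ near $p_0$; this contradicts the fact that $W^u_{p_0}$ is tangent to $\gamma_{m^+}$ at $p_0$ only to first order and differs from $\gamma_{m^+},\gamma_{m^\star}$ as a curve (they are images of different sides of $\D$). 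By definition of $t_0$, the arc of $\varphi$ from $p_0$ to $(\hat p,\hat p)$ stays in $T_d(\D)$.

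The main obstacle I anticipate is this last accumulation step in $\{y<0\}$, where the monotonicity of Lemma~\ref{lem:ming} is not available. I plan to bridge it using invariance of accumulation sets of $W^u_{p_0}$ under $T_d^{-2}$ together with the local unstable manifold theorem at $p_0$: both guarantee that backward orbits of any putative accumulation point eventually sit inside $\mathcal{E}$, which then reduces the situation to the contradiction already established in Step~1.
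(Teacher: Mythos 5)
Your strategy is genuinely different from the paper's: you track $W^u_{p_0}$ forward in time and try to force an exit through the diagonal, whereas the paper constructs a point of $I_0$ whose entire backward $T_d^2$-orbit stays in $T_d(\D)$ (via a nested sequence of compact subsegments $I_n\subset I_0$) and then uses Lemma \ref{lem:ming} to show that this backward orbit must converge to $p_0$, so that the point lies on $W^u_{p_0}\cap\{y=x\}$. Both arguments hinge on the same monotonicity statement, applied in opposite time directions. Your Step 1 is essentially sound, except that the contradiction is misnamed: a curve may perfectly well accumulate on its own initial point, so ``local injectivity of $\varphi$'' is not violated. The actual contradiction is simpler: the equality case of Lemma \ref{lem:ming} forces the subsequential limit to be $p_0$, whose second coordinate is $1$, while the limit of the strictly decreasing second coordinates is at most $y(\varphi(s_0))<1$.

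The genuine gap is in your Step 3, the passage through $T_d(\D)\cap\{y<0\}$, where you assert that any accumulation point $q^\ast$ of a bounded tail of $\varphi$ satisfies $T_d^{-2n}(q^\ast)\to p_0$ ``along $W^u_{p_0}$''. This is unjustified: an accumulation point of $W^u_{p_0}$ need only lie in its closure, not on the manifold, and points of the closure need not have backward orbits tending to $p_0$ (the closure can contain periodic orbits and homoclinic tangles --- precisely the phenomena exhibited on $\partial\A_d(0)$ in the companion paper). Nothing in your argument guarantees that the backward orbit of $q^\ast$ ever enters $\mathcal E$, so the promised reduction to Step 1 does not take place. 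The gap can be closed without any accumulation argument: the proof of Lemma \ref{lemma:d_odd_a_1_unstable} shows $T_d^{-1}(\D)\subset\{y>2/3\}$, and $T_d^{-2}(T_d(\D))=T_d^{-1}(\D)$; hence if $\varphi(s)\in T_d(\D)$ then $\varphi(s/\lambda^+)=T_d^{-2}(\varphi(s))$ has second coordinate greater than $2/3$. Therefore, if the whole ray $\varphi([0,\infty))$ stayed in $T_d(\D)$, it would in fact lie in $T_d(\D)\cap\{y>2/3\}\subset\mathcal E$ (by Lemma \ref{lem_c}), and your Step 1 already excludes that. With this replacement your forward-tracking proof goes through, modulo the strictness of the inequalities at the endpoints of $I_0$, which you (like the paper) treat only sketchily.
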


\begin{proof}
We first prove the existence of the point $(\wh  p, \wh  p)$. 
A completely analogous procedure will be used in the proof of Proposition \ref{prop:d_odd_a_1_unbounded} and in Section \ref{sec:d_odd_aminus1}.
Let 
\begin{equation*}
	I_0:= T_d(\D) \cap \{y=x\} = \left\{(s,s)\mid\ \frac{1}{m^+ v+1}\le s \le  \frac{1}{m^\star +1} \right \}. 
\end{equation*}
The image $T^{-2}_d (I_0)$ is a curve, which is a piece of the boundary of 
$T_d^{-1}(\D)$ that, be previous arguments, has to cross the left and right boundaries of $T_d(\D)$. Actually, it can be parametrized as
$s \mapsto (-3s+(2s)^{1/d}, 3s)$. In the study of the shape  of $T_d^{-1}(\D)$ we have seen that $T_d^{-1}(\D) \subset \{y> \frac{1}{m^\star+1} = \frac23  \}$.
We define 
\begin{equation*}
I_1 = T_d^2(T_d^{-2}\left(I_0\right)\cap T_d(\D))\subset I_0
\end{equation*}  
and, in general,
\begin{equation*}
	I_n = T_d^{2n}(T_d^{-2n}\left(I_{n-1}\right)\cap T_d(\D)), \qquad n\ge 1.
\end{equation*}  
It is clear that 
$	I_n  \subset I_{n-1}$ for all $n\ge 1$. 

Then, $I_\infty=\cap_{n\ge 0} I_n $ is compact and contains the points in 
$T_d(\D)$ such that all their negative iterates by $T_d^{2}$ are in   
$T_d(\D)$.
Moreover, by Lemma \ref{lem:ming}, the sequence of the second components of these iterates is increasing and has to converge to 1.
Then, those points must belong to $W^u_{p_0}$ and therefore there exists $
(\wh  p, \wh  p) \in I_0$ such that 
$$
(\wh  p, \wh  p)\subset W^u_{p_0} \cap \{y=x\}\subset T_d(\D) \cap \{y=x\}\ne \emptyset.
$$
From Lemma \ref{lemma:d_odd_a_1_unstable} the piece of $W^u_{p_0}$ from $(0,0)$ to $(\wh  p, \wh  p)$ must be contained in $T_d(\D)$. Hence Lemma \ref{lem_c} implies that $W^u_{p_0}$ cuts the segment
$(-1/2, -1/3) \times \{0\} \subset \R^2$. 
\end{proof}

\begin{figure}[ht]
    \centering
     \includegraphics[width=0.6\textwidth]{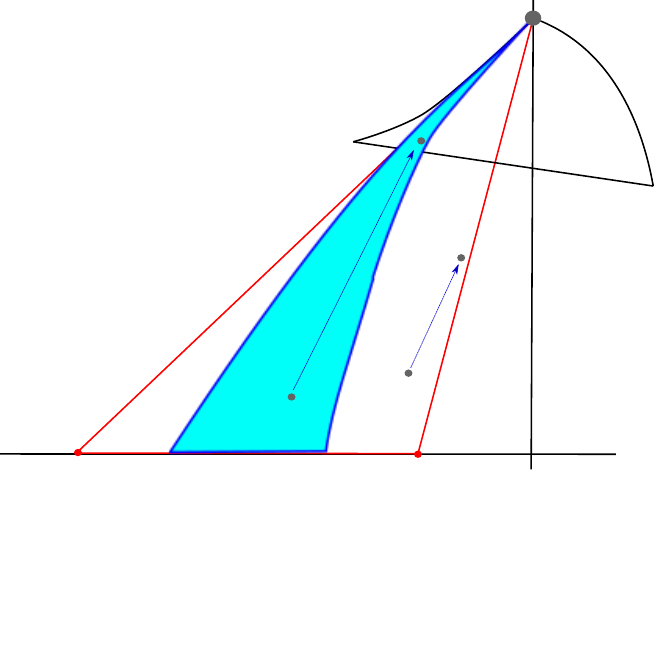}
     \put(-265,90){\tiny $y=2x+1$}
     \put(-90,90){\tiny $y=3x+1$}
     \put(-105,70){\tiny $x=-1/3$}
     \put(-250,70){\tiny $x=-1/2$}
     \put(-42,257){\tiny $p_0$}
       \put(-140,140){\tiny $T_d^{-2}$}
       \put(-105,130){\tiny $T_d^{-2}$}
       \put(-48,205){\tiny $T_d^{-2}(T_d(\D))$}
       \put(-180,85){\tiny $T_d(\D)$}
        \put(-110,85){\small $\r \mathcal E$}
     \caption{\small{$T_d(\D) \cap \{y\geq 0\} \subset \mathcal E$. Points in $\mathcal E$ are mapped ``up" by $T_d^{-2}$.}}
    \label{fig:mathcalC}
    \end{figure}

Next propositions are devoted to show the two further properties of $\A_d(0)$ claimed in Theorem A(b). First we show that the stable manifold of the periodic orbit belongs to $\partial \A_d(0)$ and second we show that $\A_d(0)$ is unbounded, which follows because the stable manifold is unbounded.

\begin{proposition}\label{prop:inclusio} 
	Let $d\geq 3$ odd. Then, $W^s \subset \partial \A_d(0)$. 
\end{proposition}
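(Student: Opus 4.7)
The plan is to first establish that the two-cycle itself lies on the boundary of $\mathcal A_d(0)$, and then promote this to the whole stable manifold via the $\lambda$-lemma applied to the hyperbolic fixed point $p_0$ of $T_d^2$.

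First, I would observe that $\mathcal A_d(0)$ is fully invariant under $T_d$. Indeed, if $y\in T_d^{-1}(\mathcal A_d(0))$ then $T_d^{n+1}(y)\to 0$, so $y\in \mathcal A_d(0)$; combined with forward invariance and Lemma \ref{lem:lines}(b) (which says $T_d$ is a homeomorphism when $d$ is odd), one obtains $T_d(\mathcal A_d(0))=\mathcal A_d(0)$ and hence also $T_d(\mathcal A_d(0)^c)=\mathcal A_d(0)^c$. In particular, intersection with $\mathcal A_d(0)$ is preserved by both $T_d$ and $T_d^{-1}$.

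Next I would show that $p_0\in\partial\mathcal A_d(0)$. By Lemma \ref{lem:w-u_intersections} the unstable manifold $W^u_{p_0}$ crosses the segment $(-1/2,-1/3)\times\{0\}$, which by Proposition \ref{prop:d_odd_a_1_boundary} is contained in $\mathcal A_d(0)$. Pick $q_0\in W^u_{p_0}\cap\mathcal A_d(0)$. The backward iterates $T_d^{-2n}(q_0)$ stay on $W^u_{p_0}$, stay in $\mathcal A_d(0)$ by full invariance, and converge to $p_0$. Since $p_0$ is a genuine period-two point it cannot belong to $\mathcal A_d(0)$, so $p_0\in\overline{\mathcal A_d(0)}\setminus\mathcal A_d(0)=\partial\mathcal A_d(0)$. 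The same argument (or an application of $T_d$) gives $p_1\in\partial\mathcal A_d(0)$. A useful consequence is that the whole branch of $W^u_{p_0}$ from $p_0$ to $q_0$ accumulates at $p_0$ while lying in $\mathcal A_d(0)$, so $W^u_{\mathrm{loc}}(p_0)$ contains points of $\mathcal A_d(0)$ arbitrarily close to $p_0$.

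Now let $q\in W^s$; say $T_d^{2n}(q)\to p_0$. First, $q\notin\mathcal A_d(0)$, because otherwise $T_d^{n}(q)$ would converge simultaneously to $0$ and to the cycle $\{p_0,p_1\}$. It remains to show that every neighborhood $V$ of $q$ meets $\mathcal A_d(0)$. For $k$ large, $T_d^{2k}(q)$ lies in the local stable manifold $W^s_{\mathrm{loc}}(p_0)$, so inside $T_d^{2k}(V)$ we can choose a small disk $D$ transverse to $W^s_{\mathrm{loc}}(p_0)$ at $T_d^{2k}(q)$. By the $\lambda$-lemma applied to the hyperbolic fixed point $p_0$ of $T_d^2$, the images $T_d^{2n}(D)$ converge in the $C^1$ topology to $W^u_{\mathrm{loc}}(p_0)$ on compact pieces. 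Since $W^u_{\mathrm{loc}}(p_0)$ contains a point $T_d^{-2N}(q_0)$ of $\mathcal A_d(0)$, and since $\mathcal A_d(0)$ is open (Proposition \ref{prop:d_odd_a_1_boundary}), for large $n$ the disk $T_d^{2n}(D)$ meets $\mathcal A_d(0)$. Pulling this intersection back by the (fully invariant) dynamics gives $V\cap\mathcal A_d(0)\neq\emptyset$. Thus $q\in\partial\mathcal A_d(0)$, and the same argument with $p_1$ in place of $p_0$ handles the remaining branch $W^s_{p_1}$.

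The delicate step is the $\lambda$-lemma application: one must be sure that the local unstable manifold $W^u_{\mathrm{loc}}(p_0)$ truly meets $\mathcal A_d(0)$ (not only the global one). This is where the accumulation argument in the second paragraph is essential, since it produces points of $W^u_{\mathrm{loc}}(p_0)\cap \mathcal A_d(0)$ arbitrarily close to $p_0$, guaranteeing that the $C^1$-approximation of $T_d^{2n}(D)$ to $W^u_{\mathrm{loc}}(p_0)$ intersects the open set $\mathcal A_d(0)$.
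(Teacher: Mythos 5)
Your proposal is correct and follows essentially the same route as the paper: first place the two-cycle on $\partial\A_d(0)$ by pulling back a point of $W^u_{p_0}\cap\big((-1/2,-1/3)\times\{0\}\big)\subset\A_d(0)$ along the unstable manifold, then use the $\lambda$-lemma on a transversal at an arbitrary $q\in W^s$ to show every neighbourhood of $q$ meets the open set $\A_d(0)$, while $q$ itself cannot lie in the basin. The only (harmless) difference is that you locate points of $\A_d(0)$ on $W^u_{\loc}(p_0)$ via backward iterates of $q_0$, whereas the paper lets the forward images of the transversal accumulate directly onto the piece of $W^u_{p_0}$ through $(x_0,0)$; both close the same step.
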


\begin{proof}
	From Proposition \ref{prop:d_odd_a_1_boundary} we know that $W^u_{p_0}$ crosses the interval $(-1/2,-1/3)\times\{0\} \subset \mathbb R^2$. Let $(x_0,0)\in W^u_{p_0}\cap (-1/2,-1/3)\times\{0\}$
	be the first intersection point of $W^u_{p_0}$ with the segment. From Proposition \ref{prop:d_odd_a_1_boundary}  we also have that $[x_0,0]\times \{0\}\subset \A_d(0)$. We have (recall that $T^{-1}_d(p_0)=p_1 $ and that when $d$ is odd $T_d$ is one-to-one)
	$$
	\displaystyle
	\bigcup_{n=0}^{\infty} T_d^{-n}\left([x_0,0]\times\{0\}\right) \subset \A_d(0) \qquad \text {and} \qquad p_j \in {\rm Acc} \left(\{T_d^{-n}(x_0)\}_{n\geq 0}\right),\quad j=0,1,
	$$ 
	where ${\rm Acc}(X)$ denotes the set of accumulation points of $X$. Since, of course, $\{p_0,p_1\} \not\in \A_d(0)$ we conclude that $p_j\in \partial  \A_d(0),\ j=0,1$. Now, let $q$ be any point in $W^s_{p_0}$ and $U$ a small disc centered at $q$ and let $\Sigma\subset U$ be a transversal segment to $W^s_{p_0}$ through $q$. On the one hand, $W^s_{p_0} \cap U$ is not contained in $\A_d(0)$. On the other hand, by the $\lambda$-Lemma \cite{PALIS69}, the iterates by $T_d^2$ of the points in $\Sigma$ (close enough to $W^s_{p_0}$) accumulate to  $W^u_{p_0}$. Therefore, we would have 
	$$
	(x_0,0) \in {\rm Acc} \left(\{T_d^{n}(\Sigma)\}_{n\geq 0}\right).
	$$
	Since $(x_0,0)\in \A_d(0)$ and $\A_d(0)$ is open we conclude that $U$ contains points of $ \A_d(0)$. If $q\in 
	W^s_{p_1}$ then $T_d(q)\in W^s_{p_0}$ and the conclusion is the same. All together implies that $q\in \partial \A_d(0)$, as desired.
\end{proof}

\begin{proposition}\label{prop:d_odd_a_1_unbounded} $ \A_d(0)$ is unbounded.
\end{proposition}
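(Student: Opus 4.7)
The plan is to leverage Proposition \ref{prop:inclusio} (just proved), which gives $W^s \subseteq \partial \mathcal{A}_d(0)$. Since a bounded set has bounded closure and $\partial \mathcal{A}_d(0) \subset \overline{\mathcal{A}_d(0)}$, it suffices to show that $W^s$ (equivalently, the stable manifold $W^s_{p_0}$ of the hyperbolic two-cycle) is unbounded. Throughout, I use that $\mathcal{A}_d(0)$ is invariant under $T_d^{-1}$ since, by Lemma \ref{lem:lines}(b), $T_d$ is a homeomorphism of $\mathbb{R}^2$.

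The key input is the explicit form of $T_d^{-1}$ from Lemma \ref{lem:lines}(b):
\[
T_d^{-1}(x,y) = \bigl(-2x+y+(x-y)^{1/d},\ 2x-y\bigr).
\]
Its leading order for $|(x,y)|$ large is the linear map with matrix $\bigl(\begin{smallmatrix} -2 & 1 \\ 2 & -1\end{smallmatrix}\bigr)$, whose eigenvalues are $0$ and $-3$, with $-3$-eigenvector $(1,-1)$. Thus $T_d^{-1}$ is asymptotically expanding by a factor of $3$ in the direction $(1,-1)$, the correction $(x-y)^{1/d}$ being of lower order. My first step is to exploit this to build an expanding invariant region: for $M$ sufficiently large, the set
\[
\mathcal{C}_M = \{(x,y) \in \mathbb{R}^2 : xy < 0,\ |x| \geq M,\ |y| \geq M\}
\]
should satisfy $T_d^{-1}(\mathcal{C}_M) \subset \mathcal{C}_M$, with the 4th-quadrant component mapped into the 2nd and vice versa, and with a uniform expansion $\|T_d^{-1}(x,y)\| \geq \alpha\,\|(x,y)\|$ for some $\alpha > 1$. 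Direct estimates on $-2x+y+(x-y)^{1/d}$ and $2x-y$ along these quadrants, absorbing the sub-leading $(x-y)^{1/d}$ term for $M$ large, should yield this. Once this is established, every orbit of $T_d^{-1}$ entering $\mathcal{C}_M$ escapes to infinity, and since $\mathcal{A}_d(0)$ is $T_d^{-1}$-invariant, any such orbit that starts in $\mathcal{A}_d(0)$ witnesses unboundedness.

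The second, and main, step is to show that $W^s_{p_0}$ eventually reaches $\mathcal{C}_M$. The natural route is to mimic the analysis of $W^u_{p_0}$ carried out in Lemmas \ref{lemma:d_odd_a_1_unstable}--\ref{lem:w-u_intersections}, but now for the stable direction: construct a thin "tube" along the stable eigenvector $(1, m^-)$ at $p_0$ in which $T_d^{-2}$ acts as a graph transform, and iterate a local arc of $W^s_{p_0,\mathrm{loc}}$ backwards in time. Because $T_d^{-2}$ expands arclength on $W^s_{p_0}$ by a factor $1/\lambda^- > 1$ near $p_0$, the successive extensions grow without bound in length; a geometric/monotonicity argument along the tube, analogous to Lemma \ref{lem:ming}, should force these extensions to exit any bounded neighborhood of the two-cycle and enter the far region $\mathcal{C}_M$, at which point they continue to infinity by Step 1.

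The main obstacle is precisely this second step: controlling the global geometry of $W^s_{p_0}$ between the local hyperbolic regime at $\{p_0, p_1\}$ and the asymptotic regime at infinity, to rule out the alternative scenario in which $W^s_{p_0}$ remains trapped in a bounded set (winding or accumulating on itself). A cleaner backup plan, if the tube construction proves unwieldy, is to avoid discussing $W^s$ altogether and work with a specific point: take $q \in \mathcal{A}_d(0) \setminus W^u_{p_0}$ on the segment $(-1/2,0) \times \{0\}$ (which exists by Proposition \ref{prop:d_odd_a_1_boundary} and Lemma \ref{lem:w-u_intersections}), and track the backward orbit $T_d^{-n}(q)$. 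Since $q$ is in the basin but not on the stable/unstable manifold of the two-cycle, this orbit neither converges to nor lies in $W^s$; combining the local hyperbolic expansion away from $\{p_0,p_1\}$ with the asymptotic expansion of Step 1, one shows that after finitely many iterates the orbit enters $\mathcal{C}_M$, yielding a sequence in $\mathcal{A}_d(0)$ with $\|T_d^{-n}(q)\| \to \infty$.
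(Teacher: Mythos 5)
Your reduction (unboundedness of $\A_d(0)$ follows from unboundedness of $W^s$, via Proposition \ref{prop:inclusio}) matches the paper, and your Step 1 is plausible: after one application of $T_d^{-1}$ a point of $\mathcal C_M$ lands near the anti-diagonal, where the degenerate linear part $\bigl(\begin{smallmatrix}-2&1\\2&-1\end{smallmatrix}\bigr)$ expands by a factor close to $3$, so orbits entering the far field do escape (though your claimed uniform bound $\|T_d^{-1}(x,y)\|\ge\alpha\|(x,y)\|$ with $\alpha>1$ fails on the first application for points with $|y|\gg x$; it only holds after one iterate). But this is not where the difficulty lies, and the proof has a genuine gap exactly where you flag "the main obstacle": nothing in your argument forces $W^s_{p_0}$ to reach $\mathcal C_M$. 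Growth of arclength under $T_d^{-2}$ (factor $1/\lambda^->1$) is compatible with the manifold winding inside a bounded set, and the "monotonicity argument analogous to Lemma \ref{lem:ming}" is not supplied — Lemma \ref{lem:ming} controls a specific bounded triangle $\mathcal E$ near the cycle and says nothing about the intermediate region between the local hyperbolic regime and the far field. Your backup plan is essentially circular: the backward orbit of a basin point $q$ near $W^u_{p_0}$ is, by the $\lambda$-lemma applied to $T_d^{-2}$, expected to escape along $W^s$ itself, so asserting that it "enters $\mathcal C_M$ after finitely many iterates" presupposes that $W^s$ reaches the far field, which is the statement to be proved; a priori that backward orbit could accumulate on some other bounded invariant set.

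The paper closes this gap by a completely explicit global construction that you would need some version of: it builds unbounded regions $E_2\subset Q_2^\star$ and $D_2\subset Q_4^\star$ bounded by the curves $\gamma^\pm,\sigma^\pm$ with the covering relations $T_d(D_2)\supset E_2$ and $T_d(E_2)\supset D_2$, proves the quantitative estimate $x_2<\lambda x_0$ with $\lambda=d^2/(1-2d)^2<1$ for points of $\Omega=T_d^{-1}(E_2)$ (Claim 1), and then, for an \emph{arbitrary} $x_0>0$, runs a nested-compact-sets argument on the vertical fibre $\{x=x_0\}\cap\Omega$ to produce a point whose entire forward $T_d^2$-orbit stays in $\Omega$ with $x_{2j}<\lambda^j x_0\to 0$ and $y_{2j}\to -1$, hence a point of $W^s_{p_1}$ at abscissa $x_0$. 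That is a forward-orbit covering argument, not a backward continuation of the local stable manifold, and it is the content your proposal is missing.
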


\begin{proof}

From the previous lemma it is enough to see that the stable manifold  $W^s$ of the hyperbolic two-cycle $\{p_0,p_1\}$ is unbounded. We start  introducing some notation. See Figure \ref{fig:unbounded}.  Let $Q_2^\star$ and $Q_4^\star$ be the closed unbounded subsets of the second and fourth quadrant defined as follows.
$$
Q_2^\star:=\{(x,y)\mid \, x\leq 0,\, y\geq 1\} \quad \mbox{and} \quad Q_4^\star:=\{(x,y)\mid \, x\geq 0,\, y\leq -1\}. 
$$
Next we split the above sets into three pieces. Concretely,
$$
Q_2^\star =\bigcup_{j=1}^3 E_j \qquad \mbox{and} \qquad  Q_4^\star =\bigcup_{j=1}^3 D_j , 
$$
where
\begin{center}
\begin{tabular}{lll}
$E_1=\{x\leq \left(\frac{y+1}{2}\right)^{\frac{1}{d}}-y\mid \ y \geq 1\}$, & \hglue 3truecm & $D_1=\{0\leq x\leq y^{\frac{1}{d}}-y\mid \ y \leq -1\}$, \\
$E_2=\{\left(\frac{y+1}{2}\right)^{\frac{1}{d}}-y \leq x\leq  y^{\frac{1}{d}}-y \mid \ y \geq 1\}$, &  \hglue 3truecm & $D_2=\{y^{\frac{1}{d}}-y \leq x\leq \left(\frac{y-1}{2}\right)^{\frac{1}{d}}-y\mid  \ y \leq -1\}$, \\
$E_3=\{y^{\frac{1}{d}}-y\leq x\leq 0\mid  \ y \geq 1\}$, & \hglue 3truecm  & $D_3=\{x\geq \left(\frac{y-1}{2}\right)^{\frac{1}{d}}-y\mid  \ y \leq -1\}$.
\end{tabular} 
\end{center}
We denote by $\{\mathcal J_\ell,\mathcal I_\ell\}$ with $\ell=1,2$  the straight boundaries of the above sets. That is,
\begin{center}
\begin{tabular}{lll}
$\mathcal J_1=\{(x,1)\mid  \ x\leq 0 \},$ & \hglue 6truecm & \quad $\mathcal I_1=\{(x,-1)\mid  \ x\geq 0 \}$, \\
$\mathcal J_2=\{(0,y)\mid  \ y\geq 1 \},$ &  \hglue 6truecm & $\quad \mathcal I_2=\{(0,y)\mid  \ y\leq -1 \}$.
\end{tabular} 
\end{center}
Finally, we denote by $\{\gamma^{\pm},\sigma^{\pm}\}$ the other boundaries of the sets $E_j$ and $D_j$. That is,
{\small 
\begin{equation} \label{eq:gammas}
\begin{split}
\gamma^{+}&= E_2\cap E_3 = \left\{\left(y^{\frac{1}{d}}-y,y\right) \mid \ y\geq 1\right\}=\{y=(x+y)^d\mid \ x\leq 0,\ y\geq 1\},  \\
\gamma^{-}&=E_1\cap E_2= \left\{\Big(\Big(\frac{y+1}{2}\Big)^{\frac{1}{d}}-y,y\Big)\mid \ y\geq 1\right\}=\{y=-1+2(x+y)^d\mid \ x\leq 0,\ y\geq 1\},    \\
\sigma^{-}&=D_1\cap D_2 =\Big\{\Big(y^{\frac{1}{d}}-y,y\Big)\mid \ y\leq -1\Big\}= \{y=(x+y)^d\mid \ x\geq 0,\ y\leq -1\},   \\
\sigma^{+}&=D_2\cap D_3=\Big\{\Big(\Big(\frac{y-1}{2}\Big)^{\frac{1}{d}}-y,y\Big)\mid \ y\leq -1\Big\} = \{y=1+2(x+y)^d\mid \ x\geq 0,\ y\leq -1\}.
\end{split}
\end{equation} 
}

\begin{figure}[ht]
    \centering
     \includegraphics[width=0.55\textwidth]{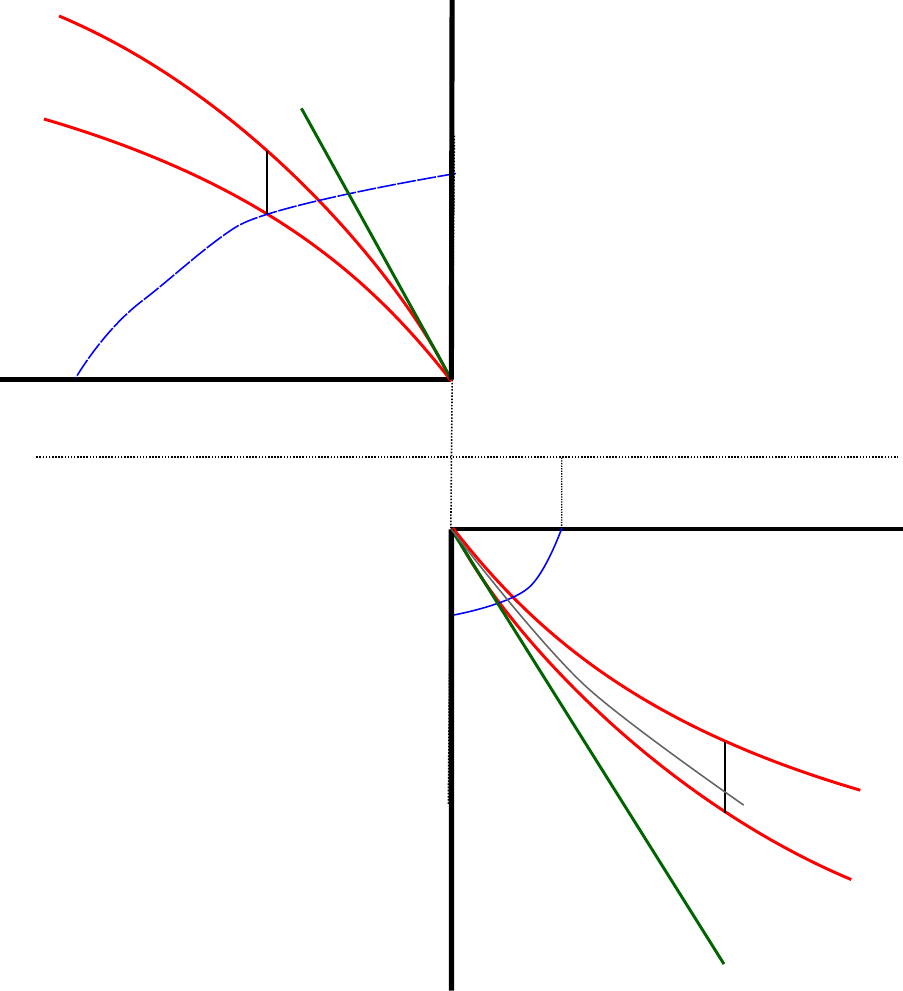}
   \put(-20,60){\scriptsize $\sigma^{+}$}
   \put(-20,20){\scriptsize  $\sigma^{-}$}
   \put(-150,155){\scriptsize $\mathcal J_1$}
   \put(-118,180){\scriptsize $\mathcal J_2$}
   \put(-135,67){\scriptsize $\mathcal I_2$}
   \put(-20,128){\scriptsize $\mathcal I_1$}
   \put(-215,260){\scriptsize $\gamma^{+}$}
   \put(-225,223){\scriptsize  $\gamma^{-}$}
   \put(-56,60){\scriptsize  $L$}
   \put(-45,56){\scriptsize  $x^{\star}$}
   \put(-30,45){\scriptsize  $D_2$}
   \put(-183,218){\scriptsize  $L_2$}
    \put(-220,240){\scriptsize  $E_2$}
   \put(-222,190){\scriptsize  $T_d(L)$}
   \put(-93,108){\scriptsize  $T_d(L_2)$}
    \put(-54,64){ $\bullet$}
    \put(-55,6){ $\bullet$}
    \put(-55,141){ $\bullet$}
    \put(-230,162){ $\bullet$}
     \put(-127,218){ $\bullet$}
      \put(-180,162){ $\bullet$}
      \put(-178,207){ $\bullet$}
     \put(-98,141){ $\bullet$}
      \put(-169,236){ $\bullet$}
      \put(-54,51){ $\bullet$}
     \put(-180,170){ \scriptsize $x^{-}$}
     \put(-189,200){ \scriptsize $(x^{-},s_1)$}
     \put(-115,218){\scriptsize $\beta(t_1)=-t_1$}
     \put(-54,135){\scriptsize $x_0$}
     \put(-51,73){\scriptsize  $(x_0,t_2)$}
      \put(-250,150){\scriptsize  $\alpha(t_2)=\frac{t_2+1}{2}$}
          \put(-120,150){\scriptsize  $\alpha_2(s_1)=\frac{s_1-1}{2}$}
   \put(-75,-5){\scriptsize  $y=\frac{2d}{1-2d}x_0-1$}
   \put(-187,245){\scriptsize  $y=\frac{2d}{1-2d}x^{-}+1$}
   \put(-225,223){\scriptsize  $\gamma^{-}$}
   \put(-225,223){\scriptsize  $\gamma^{-}$}
  \caption{\small{The sets and curves used in the proof of Proposition \ref{prop:d_odd_a_1_unbounded}. }}       
    \label{fig:unbounded}
    \end{figure}
    
\vglue 0.2truecm

See Figure \ref{fig:unbounded} for a qualitative picture and the relative position of all curves and sets. One can check that by construction we have  
$T_d(\gamma^{-})=\mathcal I_1$, $T_d(\gamma^{+})=\mathcal I_2$, $T_d(\sigma^{-})=\mathcal J_2$ and $T_d(\sigma^{+})=\mathcal J_1$.
Consequently,  
$$
T_d(D_2)=\bigcup_{j=1}^3 E_j \qquad \text{and} \qquad T_d(E_2)=\bigcup_{j=1}^3 D_j.
$$
We also notice that  the curves $\gamma^{\pm}(y)$ and $\sigma^{\pm}(y)$ are graphs of  monotonically decreasing functions of $y$. Indeed, for instance, if we write 
$\gamma^\pm=\{\left(\gamma_1^\pm (y),y\right)\mid\ y\ge 1\}$ then
$$
\frac{d\gamma_1^+}{dy}(y)=\frac{1}{d}\left(\frac{1}{y}\right)^{\frac{d-1}{d}}-1\le \frac{1}{d}-1 <0 \qquad \mbox{and} \qquad \frac{d\gamma_1^-}{dy}(y)=\frac{1}{2d}\left(\frac{2}{y+1}\right)^{\frac{d-1}{d}}-1\le \frac{1}{2d}-1 <0.
$$
Let 
\[
\Omega:=T_d^{-1}(E_2).
\]
According to the previous discussion it is clear that  $\Omega \subset D_2$ (remember that $\partial E_2 = \gamma^+\cup \gamma^{-}$). We also claim that $\partial \Omega$ is given by two curves contained in  $D_2$ which can be written as graphs of monotone functions (of $y$ as well as of $x$).  Of course $\partial \Omega = T_d^{-1}\left(\gamma^{-}\right) \cup T_d^{-1}\left(\gamma^{+}\right)$. Using \eqref 
{eq:T_inverse_odd} and \eqref{eq:gammas} we have 
$$
T_d^{-1}\left(\gamma^{-}(y)\right)=
\left(\begin{array}{c}
\xi_1(y) \\
\xi_2(y)
\end{array}
\right)
:=
\left(\begin{array}{c}
3y-2^{\frac{d-1}{d}}\left(y+1\right)^{\frac{1}{d}}+\left[2^{-\frac{1}{d}}\left(y+1\right)^{\frac{1}{d}}-2y\right]^{\frac{1}{d}} \\ 
-3y+2^{\frac{d-1}{d}}(y+1)^{\frac{1}{d}}
\end{array}
\right), \qquad y\ge 1.
$$
  
Thus, we have
\begin{equation*}
\begin{split}
\frac{d\xi_1}{dy}(y)&=3-\frac{1}{d} \left(\frac{y+1}{2}\right)^{\frac{1-d}{d}}+\frac{1}{d}\left[ \left(\frac{y+1}{2}\right)^{\frac{1}{d}}-2y\right]^{\frac{1-d}{d}} \left(\frac{1}{2d}\left(\frac{y+1}{2}\right)^{\frac{1-d}{d}}-2\right)\\
&\geq 3 - \frac{1}{d} > 0
\end{split}
\end{equation*}
  
and 
\begin{equation*}
\frac{d\xi_2}{dy}(y)=-3+\frac{1}{d}\left(\frac{y+1}{2}\right)^{\frac{1-d}{d}} \leq -3 + \frac{1}{d} < 0.
\end{equation*}
Therefore, using the same formulas as the ones in \eqref{derivadesalphabeta}, $T_d^{-1}\left(\gamma^{-}\right)$ can be written as a graph of a monotonically decreasing function  (with respect to $y$ as well as $x$). Similar computations lead to the same conclusion for $T_d^{-1}\left(\gamma^{+}\right)$.

\vglue 0.2truecm

\noindent {\bf Claim 1.} {\it Let $\lambda:= d^2/(1-2d)^2$. Let $x_0>0$ and  $(x_0,y_0) \in \Omega$. We denote $ (x_2,y_2) =T_d^2 (x_0,y_0) $. Then, 
	\[
	0\le x_2 < \lambda \, x_0.
	\] }

Given $x_0>0$, let 
$L := \{x=x_0\} \cap D_2 
= \{ (x_0,t)\mid \ t_1= t_1(x_0) \le t \le t_2(x_0) = t_2 \},$
where $ (x_0,t_1)\in \sigma ^-$ and $ (x_0,t_2)\in \sigma ^+$ and hence 
$$
t_1=(x_0+t_1)^d \qquad \text {and} \qquad t_2=2(x_0+t_2)^d+1.
$$ 
  
The image of $L$ by $T_d$ can be represented by
\[
\Gamma_1 (t) = \left(\begin{array}{c}\alpha(t) \\
	\beta(t)\end{array}\right)
:=T_d\left(\begin{array}{c}x_0 \\
	t\end{array}\right) =
\left(\begin{array}{c}t-(x_0+t)^d \\
	t-2(x_0+t)^d\end{array}\right), \qquad t\in[t_1,t_2].
\]
  
Since $d-1$ is even  and the fact that if $(x,y) \in D_2$  we have $y>2(x+y)^d+1$ and $x+y<0$,  
\begin{equation}\label{eq:alpha_prime}
	\alpha^{\prime}(t)=1-d\left(x_0+t\right)^{d-1}\le 1-d\left(\frac{t-1}{2}\right)^{(d-1)/d}\le 1-d<0.
\end{equation}
  
This means that $\alpha(t)$ is strictly decreasing in $t$ with
\begin{equation}\label{fitesalpha}
\alpha(t_2)=(t_2+1)/2 \le \alpha(t)\le   \alpha(t_1)=0.
\end{equation}
  
Similarly we have $\beta^{\prime}(t)=1-2d\left(x_0+t\right)^{d-1}
\le 1-2d<0$ and   
$\beta(t_2)=1 \le \beta(t)\le   \beta(t_1)=-t_1.$

This implies that $\Gamma_1(t)$ can be seen as the graph of an increasing function joining $\J_1$ with $\J_2$. Therefore, it crosses transversally the boundary of $E_2$. Let
$\left(x^{+},y^{+}\right)\in\gamma^+$ and  $\left(x^{-},y^{-}\right)\in\gamma^{-}$ be the corresponding  intersections. 
From \eqref{fitesalpha} and \eqref{eq:alpha_prime} we have 
\begin{equation}\label{eq:t_2}
(t_2+1)/2 \le x^- \le x \qquad \text{for all} \quad (x,y) \in \Gamma_1 \cap E_2.	
\end{equation}
  
Now, given $\xi\in[x^-,0]$ we consider the new vertical segment in $E_2$, 
\[
L_2:=\{x=\xi \}\cap E_2.
\]
By its definition $T_d(L_2)$ is a curve joining 
$\mathcal I_1$ and $\mathcal I_2$, parametrized by
$$
\Gamma_2(s)=\left(\begin{array}{c}\alpha_2(s) \\
\beta_2(s)\end{array}\right)
:=
T_d\left(\begin{array}{c}\xi \\
	s\end{array}\right) 
=
\left(\begin{array}{c}s-(\xi +s)^d \\
	s-2(\xi +s)^d\end{array}\right), \qquad s\in[s_1,s_2],
$$
where $s_1=s_1(\xi)$, $s_2=s_2(\xi)$ and
$$
s_1=2(\xi +s_1)^d-1>1 \qquad \text {and} \qquad s_2=(\xi +s_2)^d.
$$ 
  
A similar computation to the one in \eqref{eq:alpha_prime} gives that 
$\beta_2^{\prime}(s) < \alpha_2^{\prime}(s)\le 1-d<0
$ and then
$$
\alpha_2(s_2)
\le \alpha_2(s)\le \alpha_2(s_1)
=\frac{s_1-1}{2}, \qquad s\in[s_1,s_2].
$$
The claim will follow from $\alpha_2(s_1(\xi))=\frac{s_1(\xi)-1}{2} \le \lambda x_0$
for all $\xi\in[x^-,0]$. 

Clearly $\sigma^+$ is above its tangent line at  the point $(0,-1)$ which is given by $y=\frac{2d}{1-2d}x-1$ (to get the slope of the line  we can use implicit derivation to 
$y=1+2(x+y)^d$ at $(x,y)=(0,-1)$). As a consequence,
since $(x_0,t_2)\in \sigma^+$, 
\begin{equation}\label{eq:t_2_bis}
	t_2+1>\frac{2d}{1-2d}x_0.
\end{equation}
Similarly, $\gamma^{-}$ is below its tangent line at  the point $(0,1)$ given by $y=\frac{2d}{1-2d}x+1$.  Consequently, since $(\xi,s_1(\xi))\in \gamma^-$, 
\begin{equation}\label{eq:s_1}
	s_1(\xi)-1<\frac{2d}{1-2d}\xi.
\end{equation}
Using \eqref{eq:t_2}, \eqref{eq:t_2_bis} and \eqref{eq:s_1}  we have that 
$$
\alpha_2(s_1(\xi))=\frac{s_1(\xi)-1}{2}<\frac{d}{1-2d}\xi
\le \frac{d}{1-2d}x^-
\le \frac{d}{1-2d} \frac{t_2+1}{2}
\le \left(\frac{d}{1-2d}\right)^2x_0=\lambda \,x_0.
$$

\vglue 0.2truecm

\noindent {\bf Claim 2.} Let $x=x_0>0$. Then, there exists a point $(x_0,y)\in\Omega $ such that $(x_0,y)\in W^s$. In particular, from Proposition \ref{prop:inclusio}, we conclude that $\partial \mathcal A(0)$ is unbounded.

\vglue 0.2truecm

 Since $(0,-1)$ is hyperbolic we already know that $W^s_{p_1}$ exists and consists of the points such that their $\omega$-limit with respect to $T_d^2$ is $(0,-1)$.

We recall that $\Omega\subset T_d^2(\Omega) = Q^\star _4$.
Thus, 
\begin{equation}
	\label{key:Omega} 
 T_d^{-2}(\Omega) \subset \Omega.
\end{equation}

Let $K_0:=\{x=x_0\} \cap \Omega$. Clearly, $T_d(K_0)$ is a curve connecting $\gamma^{-}$ and $\gamma^{+}$ and so, by construction,  $T_d^2\left(K_0\right)$ is a curve connecting $\mathcal I_1$ and $\mathcal I_2$ and crossing $\partial \Omega$ at exactly two points (remember that $T_d$ is one-to-one): one in $T_d^{-1}\left(\gamma^{-}\right)$ and the other in $T_d^{-1}\left(\gamma^{+}\right)$. 
We write $ K_1=T_d^{-2}(T_d^2\left(K_0\right)\cap \Omega)\subset K_0\subset \Omega$.

Repeating this procedure we can define recursively 
\[
K_j=T_d^{-2j}(T_d^{2j} (K_{j-1})\cap \Omega)\subset K_{j-1}, \qquad j\geq 1.
\]
Therefore, $\{K_j\}_{j\geq 0}$ is a sequence of nested compact sets and therefore 
$$
\bigcap_{j\geq 0} K_j \ne \emptyset.
$$  
Now, we check that if $(x_0,y_0) \in \bigcap_{j\geq 0} K_j $, then  
$(x_0,y_0) \in W^s$.
Indeed, let $(x_0,y_0) \in \bigcap_{j\geq 0} K_j $. By the definition of $K_j$, 
\[
(x_{2j},y_{2j})= T_d^{2j}(x_0,y_0) \in T_d^{2j} (K_{j-1})\cap \Omega
\subset  T_d^{2j} (K_{0})\cap \Omega
, \qquad j\ge 0.
\] 
We can prove by induction that $x_{2j} < \la^j x_0$ for all $j\ge 1$. Since 
$(x_0,y_0)\in K_0\cap \Omega$, by Claim  1,  $x_{2} < \la x_0$. Assuming the statement is true for $j-1$, since 
$(x_{2j-2},y_{2j-2}) \in T_d^{2j-2} (K_0) \cap \Omega $, then
$(x_{2j},y_{2j}) = T_d^{2}(x_{2j-2},y_{2j-2}) $ satisfies 
$x_{2j} < \la x_{2j-2}$. We conclude that $ x_{2j} \to 0$. Since 
$(x_{2j},y_{2j}) \in \Omega $ we also have $ y_{2j} \to -1$.
Since $x_0$ is arbitrarily large we obtain that the invariant manifold is unbounded.  
\end{proof}


\section{Proof of Theorem A${\text (c)}$: The case $d$ odd and $a=-1$}\label{sec:d_odd_aminus1}

According to Remark \ref{remark:amenys1}, under the parameter values $d$ odd and $a=-1$, the dynamics on the center manifold of the origin is repelling and therefore the only points tending to the origin under iteration are the ones of the stable manifold of (0,0). Hence, it remains to show that the stable manifold is unbounded. 

It follows from \eqref{eq:T} that for $d$ odd and $a=-1$ the map $T_d$ is a homeomorphism and we have 
\begin{equation}\label{eq:Tpera-1}
	T_d\left(
	\begin{array}{l}
		x   \\ 
		y 
	\end{array}
	\right)  = 
	\left(
	\begin{array}{l}
		y + (x+y)^d \\ 
		y + 2(x+y)^d
	\end{array}
	\right)
	\quad \text{and} \quad 
	T_d^{-1}\left(
	\begin{array}{l}
		x   \\ 
		y 
	\end{array}
	\right)  = 
	\left(
	\begin{array}{l}
		-2x+y - (x-y)^{1/d} \\ 
		 2x-y
	\end{array}
	\right).
\end{equation} 
In a similar way as in the proof of Proposition \ref{prop:d_odd_a_1_unbounded} we introduce a domain, which we expect to contain $W^s$, and we prove that contains points, arbitrarily far away, such that  all their iterates are in the domain and moreover tend to the origin so that indeed it contains $W^s$.  

We will take this domain in the fourth quadrant $Q_4 := \{ x\ge 0,\, y\le 0\}$. We define $D^0\subset Q_4$ by the condition $T_d^2(D^0) = Q_4$. Since $T_d^2$ is a homeomorphism the boundary of $D^0$ is obtained by taking the preimage of the boundary of $Q_4$ with respect to  $T_d^2$.  
  
Consequently, the boundary of $D^0$ is the union of the images of the curves    
\begin{equation}\label{eq:sigma0}
	\begin{aligned}
		\sigma^+_0(t)& := (\al^+_0 (t) , \be^+_0 (t))
		= T_d^{-2}(t,0) =\Big( 
		6t +2t^{1/d}  + (4t+t^{1/d})^{1/d}, -6t -2t^{1/d} \Big),\ \ \ \ t\ge 0,\\
		\sigma^-_0(t) &   := (\al^-_0 (t) , \be^-_0 (t))
		= T_d^{-2}(0,-t) =\Big( 
		3t +2t^{1/d}  + (2t+t^{1/d})^{1/d}, -3t -2t^{1/d} \Big),\ \ t\ge 0.
	\end{aligned}
\end{equation}
  
We have that 
$(\al^\pm_0)' (t)>0  $ and $ (\be^\pm_0 ) ' (t) <0$. Therefore, the curves $\sigma^\pm_0$ are graphs of well defined decreasing functions 
$h^\pm_0 = \beta^\pm_0 \circ (\al ^\pm _0)^{-1}$ from $[0,\infty)
$ onto $(-\infty, 0]$.

By construction, the set $D^1 := T_d(D^0)=T_d^{-1}(Q_0)$ is the domain limited by the curves 
$\sigma ^{\pm }_1 := T_d(\sigma^{\pm }_0(t) ),\ t\ge 0$. 
Concretely, these curves are
\begin{equation}\label{eq:sigma1}
	\begin{aligned}
		\sigma^+_1(t)& =(\al^+_1 (t) , \be^+_1 (t)) = 
		T_d(\sigma^{+ }_0(t)) =
		\big( 
		-2t -t^{1/d} ,2t \big), \quad t\ge 0,	\\
		\sigma^-_1(t) & =(\al^-_1 (t) , \be^-_1 (t))
		= T_d(\sigma^{-}_0(t)) = \big( 
		-t -t^{1/d} , t\big), \ \ \ \quad t\ge 0.
	\end{aligned}
\end{equation}
  
Similarly, since $(\al^\pm_1)' (t)<0 $ and $ (\be^\pm_1 ) ' (t) >0$, we have that $\sigma ^{\pm }_1$ are graphs of decreasing functions 
$h^\pm_1 = \beta^\pm_1 \circ (\al ^\pm _1)^{-1}$ from 
$(-\infty, 0]$ onto $[0,\infty) $. 

Finally, $D^2:= T_d(D^1) $ is the full closed fourth quadrant $Q_4$. For notational convenience we also define 
\begin{equation*}\label{eq:sigma2}
	\begin{aligned}
		\sigma^+_2(t)& :=  
		T_d(\sigma^{+ }_1(t)) =
		\big( t,0 \big), \qquad t\ge 0,	\\
		\sigma^-_2(t) & 
		:= T_d(\sigma^{-}_1(t)) = \big( 
		0 , -t\big), \ \quad t\ge 0.
	\end{aligned}
\end{equation*}
Since $T_d$ is a homeomorphism, the curves $\sigma^+_1$ and $\sigma^-_1$ are the only preimages of the curves $\sigma^+_2$ and $\sigma^-_2$, respectively. They only intersect at the origin. The same happens with $\sigma^+_0$  and $\sigma^-_0$. Moreover, $\sigma^+_1$ is above  $\sigma^-_1$ and  $\sigma^+_0$ is above  $\sigma^-_0$. Also, we will use that  $\sigma^+_1$ is below  $\{y=-x\}$
and  $\sigma^-_0$ is above  $\{y=-x\}$.
Indeed, these claims can be checked from \eqref{eq:sigma0} and \eqref{eq:sigma1} after some computations.

\begin{lemma} \label{fitadividitper3}
	 If $(x_0,y_0) \in D^0$ then $(x_2,y_2):=T_d^2(x_0,y_0) \in Q_4$ and $x_2 \le x_0/2$. 
\end{lemma}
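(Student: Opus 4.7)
The plan is to reduce everything to a direct computation using the inverse $T_d^{-1}$. The first claim, $(x_2,y_2)\in Q_4$, is immediate from the defining property $T_d^2(D^0)=Q_4$. Since $T_d$ is a homeomorphism for $d$ odd (Lemma~\ref{lem:lines}(b)), $D^0$ is in fact the well-defined preimage $T_d^{-2}(Q_4)\cap Q_4$, and any $(x_0,y_0)\in D^0$ has $T_d^2(x_0,y_0)\in Q_4$ by construction.

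For the quantitative estimate $x_2\le x_0/2$, I would express $x_0$ explicitly in terms of $(x_2,y_2)$ using the inverse formula in \eqref{eq:Tpera-1}. Set $a:=x_2-y_2\ge 0$, which is non-negative because $x_2\ge 0\ge y_2$. A direct computation of $(x_1,y_1):=T_d^{-1}(x_2,y_2)$ yields
\[
x_1-y_1 \;=\; -\bigl(2x_2+2a+a^{1/d}\bigr)\;\le\; 0,
\]
so, using that $d$ is odd, $(x_1-y_1)^{1/d}=-(2x_2+2a+a^{1/d})^{1/d}$. Applying $T_d^{-1}$ a second time and collecting terms gives
\[
x_0 \;=\; 3x_2 + 3a + 2a^{1/d} + \bigl(2x_2+2a+a^{1/d}\bigr)^{1/d}.
\]
As a sanity check, specializing to $y_2=0$ (resp.\ $x_2=0$) must reproduce $\alpha^+_0$ (resp.\ $\alpha^-_0$) of \eqref{eq:sigma0}, and it does.

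Since $x_2\ge 0$ and $a\ge 0$, every summand on the right-hand side is non-negative, so $x_0\ge 3x_2$, which gives the desired bound $x_2\le x_0/3\le x_0/2$. In fact the computation yields the strictly stronger inequality $x_2\le x_0/3$, but the weaker form stated in the lemma is presumably what is needed for the subsequent iteration argument. There is no real obstacle here: the proof is essentially bookkeeping around the explicit inverse map. The only subtle point is correctly tracking the sign of $(x_1-y_1)^{1/d}$, which is negative precisely because $x_1-y_1\le 0$ and $d$ is odd; this is the single place where the parity hypothesis on $d$ is used.
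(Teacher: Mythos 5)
Your proof is correct, and it takes a genuinely different route from the paper's. You work backwards: you parametrize $D^0=T_d^{-2}(Q_4)$ by its image point $(x_2,y_2)\in Q_4$, compute $x_0$ in closed form from \eqref{eq:Tpera-1}, and read off the bound from positivity of each summand. I checked the algebra: with $a=x_2-y_2\ge 0$ one indeed gets $x_1-y_1=-(2x_2+2a+a^{1/d})\le 0$, hence
\[
x_0=3x_2+3a+2a^{1/d}+\bigl(2x_2+2a+a^{1/d}\bigr)^{1/d}\ge 3x_2 ,
\]
and the specializations $y_2=0$ and $x_2=0$ recover $\al^+_0$ and $\al^-_0$ of \eqref{eq:sigma0}, as you note; the corresponding formula $y_0=-(3x_2+3a+2a^{1/d})\le 0$ also confirms $T_d^{-2}(Q_4)\subset Q_4$. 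The paper instead argues forward in two separate steps, foliating $D^0_\rho$ by vertical segments $\{x=r\}\cap D^0_\rho$, using monotonicity of the components of their images (since $d-1$ is even) to locate the extremal $x$-value at an endpoint on $\sigma_0^{-}$ or on $\{y=-\wt r\}$, and invoking the geometric facts that $\sigma^+_1$ lies below $\{y=-x\}$; this yields $T_d(D^0_\rho)\subset D^1_{\rho/2}$ and $T_d(D^1_{\rho/2})\subset D^2_{\rho/2}$, hence the factor $1/2$. Your computation is shorter, avoids the segment-by-segment monotonicity analysis, and gives the sharper contraction factor $1/3$ (any factor strictly less than $1$ suffices for the nested-compact-set argument that follows, so nothing downstream changes). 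The one point worth making explicit, which you do, is the sign of $(x_1-y_1)^{1/d}$ for $d$ odd.
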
 
\begin{proof}
	The first part of the statement follows from the previous construction. 
	We have to prove the inequality. We define
	\begin{equation*}
		D^0_\rho =\{(x,y)\in D^0\mid \ x\le \rho \}, \quad 
		D^1_\rho =\{(x,y)\in D^1\mid \ x\ge -\rho \}, \quad
		D^2_\rho =\{(x,y)\in D^2\mid \ x\le \rho \}.
	\end{equation*}
	  
	We will prove that, for any $\rho>0$,  
	$$
	T_d(D^0_\rho ) \subset D^1_{\rho/2 } \qquad \text{and}\qquad 
	T_d(D^1_{\rho/2 }) \subset D^2_{\rho/2 }.
	$$
	  
For the first inclusion  
we consider the segments 
$\{x=r\} \cap D^0_\rho$ with $0\le r\le \rho$, parametrized by $s \in[s_-, s_+]\subset [-r,0]$,
with $s_-$ and $s_+$ such that $(r,s_-)\in \{\image \sigma_0^-\}$ and  
$(r,s_+)\in \{\image \sigma_0^+\}$. In particular, we have that there exists $t_1\ge 0$ such that 
$$
\sigma^-_0(t_1)=
\Big( 
3t_1 +2t_1^{1/d}  + (2t_1+t_1^{1/d})^{1/d}, -3t_1-2t_1^{1/d} \Big)=(r,s_-).
$$
The image of the segment can be represented by
$$
\tau_1(s) = \big( \tau_1^x(s), \tau_1^y(s) \big)
:= T_d(r,s)
= \big( s+(r+s) ^d, s+2 (r+s) ^d\big), \qquad s \in[s_-, s_+].
$$ 

Since $d-1$ is even, 
$( \tau_1^x)'(s)$ and $ (\tau_1^y)' (s)$ are positive.
This implies that the minimum of  $\tau_1^x(s)$ is attained at the value $s=s_-$.
This point is sent by $T_d$ to 
$
(-t_1- t_1^{1/d}, \, t_1)
$ 
and we have
$$
 -t_1- t_1^{1/d} = \frac12 (-3t_1 - 2 t_1^{1/d}) + \frac12 t_1 
 \ge  \frac12 s_-
\ge -\frac12 r \ge -\frac12 \rho. 
$$

Now let $\wt r$ be such that $ -\rho/2 \le \wt r \le 0$ and we consider the image of the segment 
$\{x=\wt r\} \cap \D^1_\rho$, parametrized by 
$\wt s  \in[\wt s_-, \wt s_+]\subset [0, -\wt r]$.
We write
$$
\tau_2(\wt s) = \big( \tau_2^x(\wt s), \tau_2^y(\wt s) \big) := T_d(\wt r,\wt s)
= \big( \wt s+(\wt r+\wt s) ^d, \wt s+2 (\wt r+\wt s) ^d\big). 
$$   
Since $\sigma^+_1$ is below $\{y=-x\}$, $\wt r+\wt s<0$.
In this case we also have that 
$(\tau_2^x) ' (\wt s)$ and $ (\tau_2^y) '  (\wt s)$ are positive. Then, a bound of the maximum of  $\tau_2^x(\wt s)$ is obtained from
$$
\tau_2^x(\wt s) \le \tau_2^x(-\wt r)=  - \wt r \le \rho/2. 
$$
This implies $T_d(D^1_{\rho/2 }) \subset D^0_{\rho/2 }$.
\end{proof}

Now we take $\rho>0$ arbitrary and define 
$$
I^0 = D^0 \cap \{x=\rho\}.
$$
Its image by $T_d^2$ is a curve in $Q_4 $ that joints a point in $\{\image \sigma _2^-\}$ and a point in $\{\image \sigma _2^+\}$. Then this curve has to cross 
$\{\image \sigma_0^-\}$ and  $\{\image \sigma _0^+\}$.
The set $
I^1 = T_d^{-2} (T_d^2(I^0) \cap D^0) \subset I^0
$
contains points such that they, together with their second iterates, belong to $I^0$. Repeating this procedure we define, as in the final part of the proof of Proposition
\ref{prop:d_odd_a_1_unbounded}, 
$$
I^{k} = T_d^{-2k} (T_d^{2k}(I^{k-1}) \cap D^0) .
$$ 
Clearly,  $I^{k} \subset I^{k-1}$ so that  $I^{k}$ is  a sequence of nested compact sets as well.

Then $I^\infty = \cap_{n\ge 0} I^n \neq \emptyset$.
By this construction, if $(x_0,y_0) \in I^\infty$, 
$(x_{2k},y_{2k}) = T_d^{2k}(x_0,y_0) \in T_d^{2k}(I^{k-1}) \cap D^0 \subset D^0$ for all $k\ge 0$.
Then, by Lemma  \ref{fitadividitper3}, 
$$
0< x_{2k} < \left( \frac12\right)^k x_0
$$ 
and as $(x_{2k},y_{2k})\in D^0$, the iterates 
$(x_{2k},y_{2k})$ converge to $(0,0)$  which implies that $(x_0,y_0) \in W^s_0$. Since $\rho$ is arbitrary,  $W^s_0$ is unbounded.
\begin{remark}
	Since the curves that determine th   e boundary of $D^0$ are very close they provide a very good approximation for the stable manifold, even far away from the origin. 
\end{remark}


\bibliographystyle{alpha}
\bibliography{biblio}

\end{document}